\newtheorem{theorem}{Theorem}[section]
\newtheorem{lemma}[theorem]{Lemma}
\newtheorem{proposition}[theorem]{Proposition}
\theoremstyle{definition}
\newtheorem{remark}[theorem]{Remark}
\newtheorem{assumption}[theorem]{Assumption}
\numberwithin{equation}{section} 
\newcommand{\field}[1]{\mathbb{#1}}
\newcommand{\R}{\field{R}} 
\newcommand{\Z}{\field{Z}}
\newcommand{\N}{\field{N}}
\newcommand{\C}{\field{C}}
\renewcommand{\P}{\field{P}}
\renewcommand{\AA}{{\mathcal A}}
\newcommand{\mb}[1]{\bm{#1}}
\newcommand{\cp}{\mathrm{cap}}
\newcommand{\Int}{\mathop{\rm Int}}
\newcommand{\Ext}{\mathop{\rm Ext}}
\renewcommand{\Re}{\mathop{\rm Re}}
\renewcommand{\Im}{\mathop{\rm Im}}
\newcommand{\dist}{\mathop{\rm dist}}
\newcommand{\sm}{\setminus }
\newcommand{\capto}{\stackrel{\cp}{\longrightarrow}}
\begin{document}

\title[A John Nuttall's  work 25 years later]{Heine, Hilbert, Pad\'e, Riemann, and Stieltjes: a John Nuttall's  work 25 years later.}

\author[A.~Mart\'{\i}nez-Finkelshtein]{Andrei Mart\'{\i}nez-Finkelshtein}
\address{Department of Statistics and Applied Mathematics
University of Almer\'{\i}a, SPAIN, and
Instituto Carlos I de F\'{\i}sica Te\'{o}rica y Computacional,
Granada University, SPAIN}
\email{andrei@ual.es}
\thanks{The first author was partially supported by Junta de Andaluc\'{\i}a, grant FQM-229 and the Excellence Research Grant P09-FQM-4643, as well as by the
research projects MTM2008-06689-C02-01 and MTM2011-28952-C02-01 from the Ministry of Science and Innovation of Spain and the European Regional Development Fund (ERDF)}

\author[E.A.~Rakhmanov]{Evgenii A.~Rakhmanov}
\address{Department of Mathematics,
University of South Florida, USA 
}
\email{rakhmano@mail.usf.edu}

\author[S.P.~Suetin]{Sergey P.~Suetin}
\address{V.~A.~Steklov Mathematical Institute of the Russian Academy of Sciences,
Moscow, Russia
}
\email{suetin@mi.ras.ru}
\thanks{The third author was partially supported by  the Russian Fund for Fundamental Research grant 11-01-00330, and the program ``Leading Scientific Schools of the Russian Federation'', grant NSh-8033.2010.1.}

\subjclass{Primary 42C05; Secondary 41A20; 41A21; 41A25;}
\date{February 3, 2009.}

\dedicatory{This paper is dedicated to the 60th Birthday of Francisco (Paco) Marcell\'an}

\keywords{Pad\'e approximation, algebraic functions, Heine-Stieltjes polynomials, Van Vleck polynomials, WKB analysis, asymptotics, zero distribution, Riemann-Hilbert method}

\begin{abstract}
In 1986 J.\ Nuttall published in Constructive Approximation the paper \cite{MR891770}, where with his usual insight he studied the behavior of the denominators (``generalized Jacobi polynomials'') and the remainders of the Pad\'e approximants to a special class of algebraic functions with 3 branch points. 25 years later we try to look at this problem from a modern perspective. 
On one hand, the generalized Jacobi polynomials constitute an instance of the so-called Heine-Stieltjes polynomials, i.e.~they are solutions of linear ODE with polynomial coefficients. On the other, they satisfy complex orthogonality relations, and thus are suitable for the Riemann-Hilbert asymptotic analysis. Along with the names mentioned in the title, this paper features also a special appearance by Riemann surfaces, quadratic differentials, compact sets of minimal capacity, special functions and other characters.
\end{abstract}

\maketitle


\section{Pad\'e approximants to algebraic functions}

John Nuttall, whose name appears in the title along with such distinguished, actually illustrious, colleagues,  initiated the study of convergence of Pad\'e approximants for multivalued analytic functions on the plane. Obviously, he was not the first to consider this problem; the best known result in this sense is a theorem of Markov (or Markoff) \cite{MR1554864}, see also \cite{MR1130396}, which assures the \emph{locally uniform} convergence of diagonal Pad\'e approximants to Markov functions: 
if 
$$
\widehat \sigma(z):=\int \frac{d\sigma(t)}{z-t},
$$
where $\sigma$ is a positive measure compactly supported on $\R$ with an infinite number of points of increase, then the diagonal Pad\'e approximants $[n/n]_{\widehat \sigma}$ (see the definition in Section \ref{sec:statement}) to $\widehat \sigma$, which coincide with the approximants of the Chebyshev or $J$-continued fraction for this function, converge to $\widehat \sigma$ uniformly on compact subsets of the complement to the convex hull of the support of $\sigma$, and the convergence holds with a geometric rate. This theorem applies in particular to functions as
$$
\frac{1}{(z^2-1)^{1/2}} \quad \text{or} \quad (z^2-1)^{1/2} - z.
$$
In the same vein, Dumas \cite{Dumas:1908fk} studied the case of the function of the form
$$
f(z)=\left( (z-a_1)(z-a_2)(z-a_3)(z-a_4)\right)^{1/2}- z^2 + \frac{a_1+a_2+a_3+a_4}{2}\, z , 
$$
with points $a_j\in \C$ in general position and the branch of the square root selected in such a way that $f$ is bounded at infinity. Dumas 
observed that the poles of the Pad\'e approximants to $f$ can be dense in $\C$. 

However, Nuttall was the first to abandon the real line completely and start a convergence theory in a truly complex situation. From the Dumas' work it was clear that in a general situation we can no longer expect  uniform convergence\footnote{We cannot expect uniform convergence even along subsequences, as it was shown in \cite{MR1882132, MR1957951, MR1983783}.}. The appropriate notion is the convergence   \emph{in capacity} \cite{Ransford95, Saff:97, MR93d:42029}, that was developed independently by Gonchar and Nuttall. This is an analogue of convergence in measure, where the Lebesgue or plane measure is replaced by the logarithmic capacity of the set.

Still, the question about the domain of convergence (even in capacity) remained: if the approximated function $f$ has a multi-valued analytic continuation to  $\C$ except for a finite number of branch points, then the single-valued Pad\'e approximants $[n/n]_f$ cannot converge to $f$ in this whole domain. They must ``choose'' the appropriate region of convergence where $f$ is single-valued too, and the boundary of this region should attract a sufficient number of poles of $[n/n]_f$. 

\begin{figure}[htb]
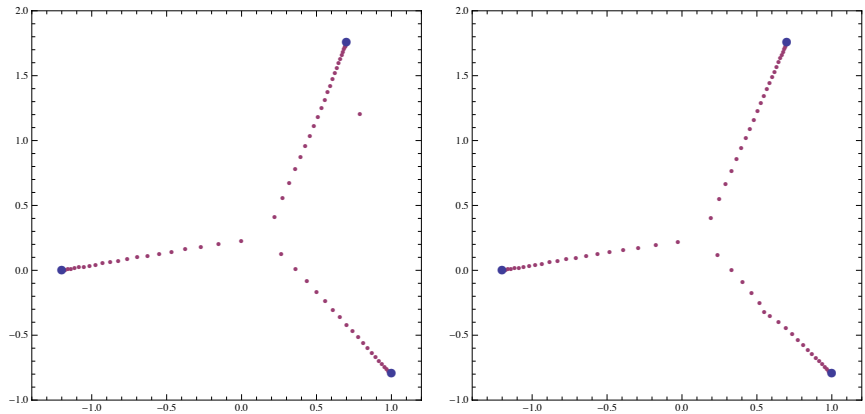

\centering \begin{tabular}{ll} \hspace{0cm}\mbox{\begin{overpic}[scale=0.6]%
{Pade71}%
\end{overpic}} &
\mbox{\begin{overpic}[scale=0.6]%
{Pade72}%
\end{overpic}}
\end{tabular}
\caption{Poles of $\pi_{71}$ and $\pi_{72}$ for $f(z)=(z+1.2)^{-3/7}(z-0.7-1.75 i)^{1/7}(z-1-0.8 i)^{2/7}$. Clearly visible is a ``spurious'' pole (left) or a distortion of the location of the poles (right).}
\label{fig:polesPade}
\end{figure}

In \cite{MR0613842, MR769985, MR0487173} Nuttall generalized Markov's theorem by considering a class of hyperelliptic functions of the form $r_1+ r_2 h^{-1/2}$, where $h$ is a polynomial of even degree and simple poles, and $r_j$ are holomorphic functions (it was extended later to meromorphic functions in the work of Stahl \cite{MR98g:41018} and Suetin \cite{MR1805599}). For these functions he found the domain where the convergence takes place: it is a complement to a system of arcs determined by the location of the branch points. Nuttall characterized this set  as having a minimal logarithmic capacity among all other systems of cuts making the approximated function single-valued in their complement (see e.g.\ Figure~\ref{fig:polesPade}).  In \cite{MR0613842} Nuttall conjectured also that this result is valid for any analytic function on $\C$ with a finite number of  branch points.

The complete proof of this conjecture was given, even in a greater generality, by H.~Stahl in a series of papers \cite{MR88d:30004a, MR88d:30004b, Stahl:86, MR90i:30063, MR99a:41017}, under the only assumption that the singularities of the function $f$ form a \emph{polar set}, i.e.~a set of logarithmic capacity zero, see Theorem \ref{thm:stahl} below. Stahl also characterized the analytic arcs forming the boundary of this domain as trajectories of a rational quadratic differential with poles at the singularities of $f$. They are also a case of the so-called Boutroux curves, see e.g.~\cite{Bertola2007}.


The general results of Nuttall and Stahl (and also of Gonchar and Rakhmanov \cite{Gonchar:84, Gonchar:87}) deal essentially with convergence in capacity and weak (equivalently, $n$-th root) asymptotics of the denominators and residues of the Pad\'e approximants. However, strong or Szeg\H{o}-type asymptotics is extremely interesting, at least in order to clarify the behavior of the spurious poles. As it was mentioned, 
poles that appear within the domain of convergence in capacity and that receive the name (coined by G.\ Baker in the 1960s) of \emph{spurious}, \emph{floating} or \emph{wandering} poles \cite{MR99k:41019, MR1805599}, can become the main obstacle for the uniform convergence. For some classes of elliptic and hyperelliptic functions \cite{MR2553077, MR1805599, MR2042915}, the dynamics of the spurious poles is completely determined by the properties of the Riemann surface underlying the approximated function. Moreover, in the elliptic case \cite{MR2042915} there is only one wandering pole, which greatly simplifies the description of the asymptotic behavior of the Pad\'e approximants.

The results in \cite{MR2042915} were obtained when the approximated functions could be represented as Cauchy integrals supported on the critical trajectory of certain quadratic differential with four poles when this trajectory consists of two disconnected components; the analogue for the Chebotarev set was treated in \cite{Baratchart:2011fk}.  

In this paper we analyze the strong asymptotics of the Pad\'e denominators and the residues of the Pad\'e approximants for a canonical class of algebraic functions -- a generalization of the Jacobi weight on $[-1,1]$, revisiting and extending the results of J.~Nuttall in \cite{MR891770}. We look at this problems from two different perspectives, which give us two formally distinct answers. One of the main goals is to understand the relation between these two asymptotic expressions, in order to get new insight into the nature of this problem and the methods we have used.

At the final stage of the preparation of this manuscript we learned about the closely related work \cite{Aptekarev:2011fk} where the authors also apply one of the techniques used by us to the asymptotic analysis of the Pad\'e denominators in a slightly more general situation. Our works, although close in spirit, have a number of substantial differences, and thus are rather complementary than overlapping.

\section{Statement of the problem} \label{sec:statement}

Let $a_j$, $j=1, \dots, p$, be distinct and in general, non-collinear points on the complex plane $\C$, $\AA:=\{a_1, \dots a_p\}$.
Let $\mathfrak A(\overline\C\sm \mathcal A)$ denote the set of functions $f$ holomorphic at infinity and such that $f$ can be continued analytically (as a multivalued function) to the whole $\overline\C\sm \mathcal A$. 
A \emph{diagonal Pad\'e approximant} to $f\in \mathfrak A(\overline\C\sm \mathcal A)$ is a rational function $\pi_n=[n/n]_f=P_n/Q_n$ of type $(n,n)$, that is, such that both $P_n, Q_n \in \P_n$ (where $\P_n$ denotes the class of algebraic polynomials of degree $\leq n$), which has a maximal order of contact with $f$ at infinity: 
\begin{equation}
\label{defPadenonlinear}
f(z)-\pi_n(z) = \mathcal{O}\left(1/z^{2n+1}\right) \quad \mbox{as} \quad z\to\infty.
\end{equation}
This condition may be impossible to satisfy, but following Frobenius, we can obtain the coefficients of $ P_n$ and $Q_n$ as a solution to the linear system
\begin{equation}
\label{defPadelinear}
R_n(z):=Q_n(z)f(z)-P_n(z) = \mathcal{O}\left(1/z^{n+1}\right) \quad \mbox{as} \quad z\to\infty, \quad Q_n\not \equiv 0.
\end{equation}
Equations \eqref{defPadelinear} form an undetermined homogeneous linear system. Although the solution of \eqref{defPadelinear} is not unique, the Pad\'e approximant (rational function) $\pi_n=P_n/Q_n$ is.  Hereafter, $(P_n,Q_n)$ will always stand for the unique pair of relatively prime polynomials determining $\pi_n$, and $Q_n$ is taken monic. The degree of $Q_n$ could be strictly $< n$. Nevertheless, in the beginning we assume that $n$ is a normal index, so that $Q_n$ is of degree exactly $n$ and \eqref{defPadenonlinear} holds.

The tight connection of the analytic theory of Pad\'e approximants with the (complex) orthogonal polynomials is given by the fact that the denominators $Q_n$ satisfy an orthogonality relation
\begin{equation}
\label{orthogonality1}
\oint  t^k Q_n(t) f(t)\, dt =0, \quad k=0, 1, \dots, n-1,
\end{equation}
where we integrate along a closed Jordan curve encircling $\mathcal A$. 

Let $\mathfrak K_f$ be the family of compacts $K$ containing $\mathcal A$ and such that $f$ has a holomorphic (single-value) continuation to $\overline\C\sm K$. 
From the fundamental work of Stahl \cite{Stahl:86,MR88k:41013, MR90f:40003, MR95a:42034} it follows that there exists $\Gamma \in \mathfrak K_f$ of minimal capacity, called the \emph{Stahl's compact}. It is comprised of a finite number of analytic curves that are trajectories of a closed quadratic differential (see \cite{MR2770010} or \cite{Strebel:84}) and has a connected complement in $\C$. In the particular case of $p=3$ this is a star-shaped compact set known as the \emph{Chebotarev compact}, see Section \ref{sec:Chebotarev}.
\begin{theorem}[Stahl]\label{thm:stahl}
(i) There exists a polynomial 
$$
V(z)=\prod_{k=1}^{p-2} (z-v_k)
$$ 
such that the complex Green function for $\Gamma$ 
is
\begin{equation}
\label{defGreenF}
G(z,\infty):=\int_{a_1}^z \sqrt{\frac{V(t)}{A(t)}}\, dt, \quad A(z):=\prod_{j=1}^p (z-a_j),\quad z\in \C\setminus \Gamma. 
\end{equation}
In other words, $\Gamma$ is given by the level curves of $\Re G(\cdot,\infty)=0$ that join zeros of $A$ or $V$. Alternatively, $\Gamma$ is made of the closure of \emph{critical trajectories} of the quadratic differential $-(V/A)(z) dz^2$.

(ii) The normalized zero counting measure for $Q_n$ converges weakly to the \emph{equilibrium measure} $\lambda_\Gamma$ of $\Gamma$, given by 
\begin{equation}
\label{equilibriumMeasureGeneral}
d\lambda_\Gamma(z)=\frac{1}{\pi i}\, \sqrt{\frac{V(z)}{A(z)}} dz,
\end{equation}
with an appropriate choice of the branch of the square root.

(ii) With an appropriate normalization of $R_n$, 
$$
\frac{1}{n}\, \log|R_n(z)| \capto - \Re G(z,\infty), \quad n\to \infty,
$$
where   $\capto$  denotes convergence in capacity  in $\C\setminus \Gamma$.
\end{theorem}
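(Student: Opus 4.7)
The plan is to establish the three parts in sequence, with the variational characterization of $\Gamma$ as the core ingredient, and then to use $\Gamma$ to analyze the complex orthogonality relation \eqref{orthogonality1}.

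First, for the existence of Stahl's compact, I would work in the family $\mathfrak K_f$ equipped with the Hausdorff metric. Because $f$ is holomorphic at infinity, any $K \in \mathfrak K_f$ lies in a fixed ball, so $\mathfrak K_f$ is compact; combined with the upper semicontinuity of logarithmic capacity under Hausdorff limits, this yields a minimizer $\Gamma$. Uniqueness (up to polar sets) follows from the strict convexity of the logarithmic energy: if two distinct minimizers existed, their union would carry an equilibrium measure of strictly smaller energy. Connectedness of $\overline\C\setminus \Gamma$ is automatic, since filling in bounded complementary components of any competitor does not increase capacity and preserves single-valuedness of $f$ in the unbounded component.

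Second, the central step is the symmetry (or $S$-)property. Consider smooth normal perturbations of each analytic arc of $\Gamma$, of the form $\Gamma_t = \{z + t\,h(z)\nu(z):z\in\Gamma\}$, preserving the monodromy class of small loops around each $a_j$. Then $\Gamma_t\in\mathfrak K_f$, and the first variation of capacity must vanish, which is equivalent to the normal derivatives of $G(\cdot,\infty)$ from the two sides of $\Gamma$ coinciding on each arc. Together with $G\equiv 0$ on $\Gamma$, this forces $G'(z)^2$ to have real boundary values on $\Gamma$ and therefore to extend by Schwarz reflection to a meromorphic function on $\overline\C$. Its only possible singularities are simple poles at the $a_j$ (reflecting the expected $1/\sqrt{z-a_j}$ behavior of $G'$ at endpoints), and it has a double zero at infinity because $G(z)=\log|z|+\mathcal O(1)$. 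Degree counting then forces $G'(z)^2 = V(z)/A(z)$ for a polynomial $V$ of degree $p-2$. Integrating gives (i); recognizing $\lambda_\Gamma$ as the jump of $G'/(\pi i)$ across $\Gamma$ yields the density in (ii).

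Third, for the zero distribution of $Q_n$ and the asymptotics of $R_n$, deform the contour in \eqref{orthogonality1} to $\Gamma$ itself. The multivaluedness of $f$ across $\Gamma$ generates a (possibly complex) weight, converting \eqref{orthogonality1} into non-Hermitian orthogonality on $\Gamma$. Extract any weak$^*$ limit $\mu$ of $\nu_n:=n^{-1}\sum_{Q_n(\zeta)=0}\delta_\zeta$. A lower envelope estimate $\liminf n^{-1}\log|Q_n(z)|\ge U^\mu(z)-C$, combined with the orthogonality, forces $U^\mu$ to be essentially constant on $\supp \mu$, so $\mu$ is an equilibrium measure; the minimal-capacity characterization of $\Gamma$ pins $\mu=\lambda_\Gamma$ and upgrades convergence to the full sequence. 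For (iii), start from the Hermite-type representation of $R_n$ as a contour integral of $Q_n(t)f(t)/(t-z)$ around $\Gamma$, plug in the weak asymptotics of $Q_n$, and apply standard capacity estimates to control the spurious poles and the zeros of $Q_n$.

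The hardest part is step two: making the first-variation argument rigorous requires ruling out degenerate configurations (tangential self-contacts, isolated points of $\Gamma$, coincidences between branch points and critical points of $V$), and conversely proving that the critical graph of $-(V/A)\,dz^2$ assembles into a compact of the correct topological type to make $f$ single-valued in its complement. A secondary difficulty in step three is the passage from weak asymptotics of the zeros of $Q_n$ to convergence in capacity for $R_n$, since the spurious poles located off $\Gamma$ must be shown to form an exceptional set of logarithmic capacity tending to zero with $n$.
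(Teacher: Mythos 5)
The paper does not prove this theorem: it is quoted as a known result of Stahl, with references to \cite{MR88d:30004a, MR88d:30004b, Stahl:86, MR90i:30063, MR99a:41017}, so there is no internal proof to compare against. Your outline does follow the standard Stahl/Gonchar--Rakhmanov route (extremal compact, $S$-property by first variation, Schwarz reflection of $(G')^2$ to get $V/A$, then non-Hermitian orthogonality for the weak asymptotics), and the reflection/degree-counting derivation of $G'(z)^2=V(z)/A(z)$ in your second step is essentially correct. But two of your steps rest on inequalities that point the wrong way, and a third hides the actual core difficulty.

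First, existence and uniqueness of $\Gamma$. Upper semicontinuity of $\cp$ under Hausdorff convergence ($\limsup_n \cp(K_n)\le \cp(K)$, which does hold) yields existence of \emph{maximizers} of capacity, not minimizers; for a minimizing sequence you would need lower semicontinuity, which fails badly (a sequence of finite sets, each of capacity zero, can converge in the Hausdorff metric to a segment of positive capacity). Moreover $\mathfrak K_f$ is not closed in any naive sense: a Hausdorff limit of admissible compacts need not keep $f$ single-valued in its complement. Your uniqueness argument is likewise reversed: if $K_1,K_2\in\mathfrak K_f$, then $K_1\cup K_2\in\mathfrak K_f$ but $\cp(K_1\cup K_2)\ge\max(\cp(K_1),\cp(K_2))$ (equivalently, the union has \emph{smaller} energy, i.e.\ larger capacity), so the union gives no contradiction with minimality; the natural competitor, $K_1\cap K_2$, is generally not admissible because it no longer annihilates the monodromy of $f$. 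Stahl's actual existence and uniqueness proofs circumvent both obstacles and are a substantial part of \cite{MR88d:30004a, MR88d:30004b}.

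Second, in step three the ``lower envelope estimate $\liminf n^{-1}\log|Q_n|\ge U^\mu-C$, hence $U^\mu$ is constant on $\supp\mu$'' argument is the one that works for orthogonality with respect to a \emph{positive} measure. Here the weight $\rho$ induced on $\Gamma$ by the jump of $f$ is genuinely complex, so $\int_\Gamma Q_n^2\rho\,dt$ admits cancellation and one cannot pass from the orthogonality relations to pointwise lower bounds on $|Q_n|$ by taking absolute values. Controlling this cancellation is precisely where the $S$-property of $\Gamma$ re-enters and where the main technical novelty of \cite{Stahl:86, MR99a:41017} lies; as written, your step three would prove the zero distribution only for positive weights on $\Gamma$, which is not the situation of Theorem \ref{thm:stahl}.
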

\begin{remark}
Depending on the function $f$ (and the corresponding class $\mathfrak K_f$), Stahl's compact is not necessarily connected. For instance, already for $p=4$ it can be a tree (for $f(z)=A^{1/4}(z)$) or a union of two analytic arcs (when $f(z)=A^{1/2}(z)$). In the class $\mathfrak K_f$ Stahl's compact however is completely characterized by its \emph{$S$-property}, namely
$$
\frac{\partial}{\partial n_-} \left( \Re G(z, \infty) \right)= \frac{\partial}{\partial n_+} \left( \Re G(z, \infty) \right), \quad z\in \Gamma^\circ,
$$
where $n_\pm$ are the normal vectors to $\Gamma^\circ$. The $S$-property and its generalizations play a crucial role in many branches of analysis and mathematical physics, see e.g.~\cite{Aptekarev:2011fk, Bertola2007, Gonchar:87, MR2770010, MFRS}.
\end{remark}
 
In this paper we concentrate on a canonical example of a function from $\mathfrak A(\overline\C\sm \mathcal A)$. Namely, let   $\alpha_j \in \R\setminus \Z$, $j=1, \dots, p$, be such that  $\alpha_1+\dots+\alpha_p=0$. We additionally assume that no proper subset of $\alpha_j$'s adds up to an integer, so that the corresponding Stahl's compact $\Gamma \in \mathfrak K_f$ is a continuum. This is a sufficient condition for an underlying Riemann surface having the maximal genus (see below). However, for more precise hypotheses, see Assumption \ref{assumption1}. The simplest non-trivial example of this situation is when $p=3$, when $\Gamma$ is star-shaped (Chebotarev continuum). As it follows from the works of Stahl, such ``stars'' along with analytic curves are the main building blocks for a generic $\Gamma$. 

Let
\begin{equation}
\label{f}
f(z)=\prod_{j=1}^p (z-a_j)^{\alpha_j} =1+\sum_{k=1}^\infty \frac{f_k}{z^k},
\end{equation}
where the expansion is convergent in the neighborhood $\mathfrak O:=\{z\in \overline\C:\, |z|>\max_j |a_j| \}$ of infinity. We will agree in denoting by $f^{1/2}$ the branch of the square root in $\mathfrak O$ such that $f^{1/2}(\infty)=1$.

As it was mentioned above, our main goal is to find the strong asymptotics of the Pad\'e denominators $Q_n$, as $n\to \infty$. We derive this asymptotics using two complementary methods. The first one, developed in Section \ref{sec:wkb}, is based on the differential equation satisfied by $f$ and is a combination of the original ideas of Nuttall from \cite{MR891770} with some new developments in the asymptotic theory of generalized Heun differential equations. The second method  is the non-linear steepest descent analysis of Deift and Zhou (see e.g.~\cite{MR2000g:47048}) based on the matrix Riemann-Hilbert problem \cite{Fokas92} solvable in terms of $Q_n$ and $R_n$. In Section \ref{sec:RH} we apply it exclusively to the case of $p=3$.

These two methods provide formally different expressions for the leading term of asymptotics of the Pad\'e denominators and Pad\'e residues. We find their comparison in Section \ref{sec:comparison} very illuminating.

The form of the asymptotics for $Q_n$ and $R_n$ was actually conjectured by Nuttall in \cite{MR769985} in terms of a function solving certain \emph{scalar} boundary value problem. We show also that our results match the Nuttall's conjecture, see Section \ref{sec:comparison}.

\section{Heine and Stieltjes, or asymptotic analysis based on the Liuoville-Green approximation} \label{sec:wkb}

The key observation is that function $f$ in \eqref{f} is semiclassical: it satisfies the ODE
$$
\frac{f'(z)}{f(z)}=\sum_{j=1}^p \frac{\alpha_j}{z-a_j}= \frac{B}{A}(z),\quad A(z):=\prod_{j=1}^p (z-a_j), \quad B\in \P_{n-2}.
$$
It can be proved by standard methods (see e.g.~\cite{MR891770}) that as a consequence, the Pad\'e denominators $Q_n$, the Pad\'e numerators $P_n$, and the remainders $R_n$ satisfy the Laguerre equations:
\begin{theorem}
\label{theorem1}
For each normal index $n$ there exist polynomials $h_n(x)=x^{p-2}+\dots\in \P_{p-2}$ and $D_n(x)=x^{2p-4}+\dots \in \P_{2p-4}$, such that
\begin{equation}
\label{1}
A h_n y'' + \left(A' h_n - A h_n' - B h_n \right) y' -n(n+1) D_n y =0
\end{equation}
is solved by $R_n$,  $Q_n f$ and $P_n$.
\end{theorem}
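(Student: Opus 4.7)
The plan is to construct the annihilating second-order linear operator for the pair $y_1 := Q_n f$, $y_2 := P_n$ out of their Wronskian, and then to read off its coefficients; since the resulting operator automatically annihilates $R_n = y_1 - y_2$ as well, all three claims in the theorem follow simultaneously.

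I would begin with the direct computation, based on $A f' = B f$ and $P_n = Q_n f - R_n$, yielding the two equivalent representations of the Wronskian
$$
W := y_1 y_2' - y_1' y_2 = \frac{f}{A}\,\Omega_n,
$$
with
$$
\Omega_n = A\,(Q_n P_n' - Q_n' P_n) - B\,Q_n P_n = A\,(Q_n' R_n - Q_n R_n') + B\,Q_n R_n.
$$
The first formula shows $\Omega_n$ is a polynomial, a priori of degree $\leq 2n + p - 2$; the second controls its size at infinity. Because $R_n(z) = c_n z^{-n-1} + O(z^{-n-2})$ with $c_n \neq 0$ by normality of $n$, and because $\sum_j \alpha_j = 0$ forces $B \in \P_{p-2}$, the latter expression gives $\Omega_n(z) = (2n+1)\,c_n\,z^{p-2} + O(z^{p-3})$. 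Hence $\Omega_n$ has degree exactly $p-2$, and $h_n := \Omega_n/[(2n+1)c_n]$ is the monic polynomial of degree $p-2$ asserted by the theorem.

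The unique (up to a scalar factor) second-order linear ODE with $y_1, y_2$ as solutions then has the determinantal form $W y'' - W' y' + W_2 y = 0$, where $W_2 := y_1' y_2'' - y_1'' y_2'$. From $W = (2n+1) c_n\,f h_n/A$ one computes $W'/W = B/A - A'/A + h_n'/h_n$, so multiplication of the ODE by $A h_n$ converts the $y'$-coefficient into $A' h_n - A h_n' - B h_n$, exactly as in the statement. What remains is the identity
$$
A h_n\,\frac{W_2}{W} = -n(n+1)\,D_n, \quad D_n(z) = z^{2p-4} + \dots \in \P_{2p-4}.
$$

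For this last step, I would write $y_1'/f = (AQ_n' + BQ_n)/A$ and $y_1''/f$ in closed form, which shows $W_2 = f \cdot \widetilde R$ for a rational function $\widetilde R$; the $f$ therefore cancels in $W_2/W$, and $A h_n (W_2/W) = A^2 \widetilde R/[(2n+1)c_n]$ is rational. At each branch point $a_j$ a local expansion gives $W_2/W \sim (\alpha_j - 1)/(z-a_j)$, and the simple zero of $A$ there removes this pole; at each zero of $h_n$ (a \emph{Van Vleck} point) both $y_1$ and $y_2$ extend holomorphically, so the singularity is apparent and the factor $h_n$ neutralizes the pole. Hence $A h_n (W_2/W)$ is an entire rational function, i.e., a polynomial. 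A direct asymptotic analysis at infinity using $y_1 \sim z^n$, $R_n \sim c_n z^{-n-1}$, and the identity $W_2(y_1, y_2) = -W_2(y_1, R_n)$ yields $W_2/W \sim -n(n+1)/z^2$, so $A h_n (W_2/W) \sim -n(n+1)\,z^{2p-4}$. Setting $D_n := -A h_n (W_2/W)/[n(n+1)]$ produces a monic polynomial of degree $2p-4$ and finishes the proof. The subtlest part to verify carefully will be the apparent-singularity check at the Van Vleck zeros of $h_n$, where the regularity of $W_2/W$ is not dictated by $B/A$ but follows from the holomorphic extension of both $y_1, y_2$ through these points; this step crucially uses the normality of $n$.
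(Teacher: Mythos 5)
Your proof is correct, and it is precisely the standard Wronskian/Laguerre construction that the paper itself omits (it cites Nuttall's ``standard methods'' for this step): form $W(y_1,y_2)$ with $y_1=Q_nf$, $y_2=P_n$, identify $h_n$ from the two representations of $\Omega_n$ together with normality ($c_n\neq 0$), and read off $D_n$ from the zeroth-order coefficient and its behaviour at infinity. Two small remarks: the residue you assign to $W_2/W$ at $a_j$ is in fact that of $W'/W$ (harmless, since only the pole order matters and the simple zero of $A$ does remove it), and the step you single out as subtlest --- the apparent-singularity check at the zeros of $h_n$ --- is actually automatic, because $\Omega_n$ is a constant multiple of $h_n$, so $A h_n\,(W_2/W)=A^2\widetilde R/\bigl[(2n+1)c_n\bigr]$ identically, and $A^2\widetilde R$ is manifestly a polynomial (the only poles of $\widetilde R$ are double poles at the simple zeros of $A$); no appeal to holomorphic continuation of $y_1,y_2$ through the Van Vleck points is needed.
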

\begin{remark}
In the case of $Q_n$, the ODE is of the form
$$
A h_n y_n'' + \left(A' h_n - A h_n' + B h_n \right) y_n' -n(n+1) D_n y_n =0.
$$
\end{remark}
Let us use the notation $z_{k,n}$ for the zeros of the polynomials $h_n$ from Theorem \ref{theorem1}:
\begin{equation}
\label{defzeroshn}
h_n(z)=\prod_{k=1}^{p-2}(z-z_{k,n}).
\end{equation}
In order to simplify the situation and concentrate on the main ideas we impose the following assumptions on the zeros of $h_n$:
\begin{assumption}
\label{assumption1}
There exists a constant $M>0$ such that for all sufficiently large $n$,
\begin{equation}
\label{mainassumption}
|z_{k,n}|\leq M \quad \text{and} \quad |A V_n h_n'(z_{k,n})|\geq C, \quad k=1, \dots, p-2.
\end{equation}
In other words, all zeros $z_{k,n}$ of $h_n$ belong to the disk $|z|\leq M$, and they stay away from the zeros of $A V_n$ and from each other.\footnote{ The general case requires the spherical normalization for $h_n$ and has to be treated separately. We avoid further discussion of  this situation for the sake of simplicity.}
\end{assumption}
Observe that the second part of this assumption is completely innocent: in a general case, we will have a reduction in genus and in a number of cycles in the basis of the underlying Riemann surface, see below.

From Assumption \ref{assumption1} it follows that for all sufficiently large $n$ the zeros of $D_n$  lie in the disk $|z|\leq 2M$, see e.g.~\cite{MR2770010}, so that that the set
$$
e_n:=\{z\in \C:\, A h_n D_n=0\}
$$
is uniformly bounded.

Define in $\mathfrak O$,
$$
H_n(z):=\int_{a_1}^z \sqrt{\frac{D_n}{A h_n}(t)}\, dt,
$$
where the branch is chosen such that  $H_n(z)=\log z +\mathcal O(1)$ as $z\to \infty$. It can be extended as an analytic and multivalued function to the whole $\overline \C$.

The polynomial solution $P_n$ of \eqref{1} is known as a  \emph{Heine-Stieltjes polynomial}, while the corresponding coefficient $D_n$ in \eqref{1} is called a \emph{Van Vleck polynomial}, see e.g.~\cite{MR2770010, MR2647571}. Theorem 2.1 from \cite{MR2647571} gives a global description of the trajectories of the quadratic differential $(H_n')^2 (z) dz^2$. In particular, it is a quasi-closed differential with one trajectory emanating from each zero of $A$ and ending at infinity. 
Combining techniques from  \cite{MR2647571} and \cite{MR891770} we get
\begin{theorem}\label{theorem2}
For any $a\in \AA$ there exists a progressive path\footnote{Progressive path $\gamma$ means that $\Re H_n(z)$ is non increasing along $\gamma$.} $\gamma=\gamma(a)$, starting at a point $z_0\in \mathfrak O$ and returning back to $z_0$, which is homotopic in $\C\setminus e_n$ to a contour $\widetilde \gamma$ with $a\in \Int(\widetilde \gamma)$ and $e_n\setminus \{a\} \subset \Ext(\widetilde \gamma)$.

For any such a progressive path $\gamma$  we have for $z \in \gamma \cup \mathfrak O$,
\begin{align}\label{2}
R_n(z) = & C_{n,1}\, \frac{h_n^{3/4} f^{1/2}}{(A D_n)^{1/4}}(z)\, e^{-(n+1/2) H_n(z)} \, \left( 1+\delta_1(z) \right). 
\end{align}
If for $\rho >0$, $\dist(z, e_n)\geq \rho$, then $n |\delta_1(z)|$ is uniformly bounded by a constant depending on $\rho$. 
\end{theorem}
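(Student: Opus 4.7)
The plan is to apply the Liouville-Green (WKB) method to the Laguerre equation \eqref{1} and single out $R_n$ as the subdominant solution along the loop $\gamma(a)$. As a first step I would remove the first-order term by the substitution $y = (h_n f/A)^{1/2} u$: since the coefficient of $y'$ in \eqref{1} divided by $Ah_n$ equals $A'/A - h_n'/h_n - B/A$, and $B/A = f'/f$, the integrating factor is exactly $(h_n f/A)^{1/2}$. The resulting Schr\"odinger form reads $u'' = [(n+1/2)^2 F_n(z) + r_n(z)]\, u$ with $F_n := D_n/(A h_n)$, where $r_n$ is a rational function \emph{independent} of $n$: it collects $P'/2 + P^2/4$ together with the $-1/4$ coming from the identity $n(n+1) = (n+1/2)^2 - 1/4$ (this is precisely the source of the $n+1/2$ appearing in \eqref{2}).

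The classical Liouville-Green approximation for $u'' = (n+1/2)^2 F_n u$ produces the two formal solutions $F_n^{-1/4} e^{\pm (n+1/2) H_n}$, since $H_n' = F_n^{1/2}$ by construction. Reversing the substitution $y = (h_n f/A)^{1/2} u$ and simplifying,
$$
(h_n f/A)^{1/2}\, F_n^{-1/4} = (h_n f/A)^{1/2}\,(A h_n/D_n)^{1/4} = \frac{h_n^{3/4}\, f^{1/2}}{(A D_n)^{1/4}},
$$
which is exactly the prefactor in \eqref{2}. Substituting the ansatz $R_n = C_{n,1}\,\bigl(h_n^{3/4} f^{1/2}/(A D_n)^{1/4}\bigr)\, e^{-(n+1/2) H_n}(1+\delta_1)$ into \eqref{1}, the error $\delta_1$ satisfies a linear ODE whose right-hand side carries an explicit factor $1/n$. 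Rewriting it as a Volterra integral equation along $\gamma$ and using the \emph{progressive} property ($\Re H_n$ non-increasing along $\gamma$) to dominate the exponential kernel, a standard Gronwall argument yields $n|\delta_1(z)| \leq c(\rho)$ on $\{z\in\gamma:\,\dist(z,e_n)\geq \rho\}$.

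To pin down the constant $C_{n,1}$ and identify $R_n$ with this particular WKB solution (rather than with a nontrivial combination involving the dominant $e^{+(n+1/2) H_n}$), I would match the ansatz at $z_0\in\mathfrak O$ against the defining expansion $R_n(z) = \mathcal O(z^{-n-1})$, $z\to\infty$, from \eqref{defPadelinear}. A degree count gives $h_n^{3/4} f^{1/2}/(AD_n)^{1/4} \sim z^{-1/2}$ and $e^{-(n+1/2) H_n} \sim z^{-(n+1/2)}$ at infinity, so the ansatz automatically produces the correct $z^{-n-1}$ decay and matching leading coefficients fixes $C_{n,1}$. A putative $e^{+(n+1/2) H_n}$ component would grow like $z^{+n+1/2}$ and is therefore forbidden in $R_n$. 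The analytic continuation of the formula from $\mathfrak O$ along $\gamma$ is automatic: because $\gamma$ is contained in a single Stokes region of the differential $(H_n')^2(z)\,dz^2$, no Stokes phenomenon can reintroduce the dominant solution along the way.

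The existence of the progressive loop $\gamma(a)$ around a given $a\in\AA$ is supplied by the global trajectory structure established in \cite[Thm.~2.1]{MR2647571}: the quadratic differential $(H_n')^2(z)\,dz^2$ is quasi-closed, exactly one critical trajectory issues from each $a_j\in\AA$ and runs to infinity, while the remaining critical arcs tie together the zeros of $h_n$ and $D_n$. Following Nuttall \cite{MR891770}, I would let $\gamma$ descend from $z_0\in\mathfrak O$ along this critical trajectory toward $a$, encircle $a$ on a short arc, and return along the same trajectory; a small homotopy in $\C\sm e_n$ then produces a loop progressive in the sense required, with winding number one around $a$ and zero around every other point of $e_n$. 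The main obstacle, in my view, is not the WKB estimation itself but this last step: the construction of the \emph{short} encircling arc near the turning point $a$ and the verification that $\Re H_n$ can be kept non-increasing across it. This is precisely where Assumption \ref{assumption1} and the separation estimates on the zeros of $A h_n D_n$ enter, and it forces an $O(1/\sqrt n)$ local scale around $a$ rather than a clean $O(1/n)$ bound, the latter being recovered only after one moves away from $a$ by a fixed distance $\rho > 0$.
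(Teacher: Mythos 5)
Your proposal is correct and is essentially the paper's own approach: the paper offers no argument for Theorem \ref{theorem2} beyond the phrase ``combining techniques from \cite{MR2647571} and \cite{MR891770}'', and your reconstruction --- the substitution $y=(h_n f/A)^{1/2}u$ reducing \eqref{1} to $u''=\bigl((n+1/2)^2 D_n/(Ah_n)+r_n\bigr)u$, the Liouville--Green solutions $F_n^{-1/4}e^{\pm(n+1/2)H_n}$ giving exactly the prefactor in \eqref{2}, the identification of $R_n$ as the recessive solution at infinity via \eqref{defPadelinear}, and the progressive loop built from the trajectory of $(H_n')^2\,dz^2$ joining $a$ to $\infty$ --- is precisely that combination (your $r_n$ is not literally independent of $n$, but Assumption \ref{assumption1} makes it uniformly bounded away from $e_n$, which is all the Gronwall step needs). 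One correction to your closing paragraph: since Assumption \ref{assumption1} keeps $a$ at a fixed positive distance from $e_n\setminus\{a\}$, the encircling arc can be taken on a circle of \emph{fixed} radius, entered at the point where the trajectory emanating from $a$ meets it (there $\Re H_n$ is maximal on the circle and, because $H_n-H_n(a)\sim 2c(z-a)^{1/2}$, it decreases monotonically as one goes once around), so no $n$-dependent local scale is needed; on the contrary, an arc shrinking like $n^{-1/2}$ would degrade the claimed $O(1/n)$ bound on the return leg to roughly $O(n^{-3/4})$, since $r_n\sim(\alpha_j^2-1)/\bigl(4(z-a)^2\bigr)\not\equiv 0$ makes the error-control integral $\int |r_n|\,|H_n'|^{-1}\,|dt|$ grow like $\dist(\gamma,a)^{-1/2}$.
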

\begin{remark}
This formula should be understood in the following way: the right hand side is chosen for $z\in \mathfrak O$ according to the branch of $H_n$ described above, and then both the left and the right hand sides are continued analytically along $\gamma$.
In this way this formula may be extended from progressive paths to rectangles in the $\zeta=e^{-H_n(z)}$ plane.

Constant $C=C_{n,1}$ in \eqref{2} depends on the normalization of $R_n$.
\end{remark}

Formula \eqref{2} is not totally satisfactory, since it has a number of undetermined parameters. Our next task is to clarify their behavior.
 
Using Assumption \ref{assumption1} and compactness argument we can choose a subsequence $\Lambda=\{n_k\}\subset \N$ such that
$$
h_n \to h, \quad D_n \to D, \quad \text{as } n\in \Lambda,
$$
so that by \eqref{2},
$$
\frac{1}{n}\, \log|R_n(z)| \capto -\Re \int_{a_1}^z \sqrt{\frac{D(t)}{A h(t)}}\, dt, \quad n\in \Lambda,
$$
for $z\in \mathfrak O$. Let $\Gamma$ be the Stahl's compact associated with $f$, i.e.~$f$ is holomorphic in $\C\setminus \Gamma$, and $\Gamma$ has the minimal capacity in the class  $\mathfrak A(\overline\C\sm \mathcal A)$.  Then, it follows from Theorem \ref{thm:stahl} that  $D/(Ah)=V/A$, with $V$ defining $\Gamma$. Since $\Lambda$ was an arbitrary convergent subsequence, we conclude that actually
$$
\lim_n H_n'=\lim_n \frac{D_n}{A h_n}=\frac{V}{A}=G'(\cdot, \infty).
$$
This establishes
\begin{lemma}\label{lemma1part1}
For the polynomials $D_n$ in \eqref{1} we have the representation $D_n(z)=V_n(z)\widetilde h_n(z) $, such that 
$$
V_n(z)=\prod_{k=1}^{p-2} (z-v_{k,n}) \to V(z), \quad n\to \infty,
$$ and $\widetilde h_n(z)=z^{p-2}+\dots$ satisfies $h_n - \widetilde h_n \to 0$ as $n\to \infty$.
\end{lemma}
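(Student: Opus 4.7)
My plan is to first extract a convergent subsequence and identify the limiting factorization of $D_n$, then upgrade this to convergence of the full sequence and localize the two factors.

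First, by Assumption~\ref{assumption1} the zeros of the monic polynomial $h_n$ are uniformly bounded, hence so are its coefficients; the same holds for $D_n$ by the paragraph following Assumption~\ref{assumption1}. A compactness argument produces a subsequence $\Lambda\subset\N$ along which $h_n\to h$ and $D_n\to D$ uniformly on compacts, with $h,D$ monic of respective degrees $p-2$ and $2p-4$. Along $\Lambda$, the function $H_n'=\sqrt{D_n/(Ah_n)}$ converges locally uniformly away from the zeros of $Ah$ to $\sqrt{D/(Ah)}$.

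Second, I would combine formula~\eqref{2} with the uniform bound $n|\delta_1|=O(1)$ away from $e_n$ to obtain
$$
\frac{1}{n}\,\log|R_n(z)|\;\longrightarrow\;-\Re H(z),\qquad H(z):=\int_{a_1}^{z}\sqrt{\tfrac{D(t)}{A(t)h(t)}}\,dt,
$$
locally uniformly in $\mathfrak O$ along $\Lambda$. On the other hand, Theorem~\ref{thm:stahl} asserts that the same quantity converges in capacity to $-\Re G(\cdot,\infty)$, with $G'(\cdot,\infty)=\sqrt{V/A}$ and $V$ the polynomial of degree $p-2$ attached to the Stahl compact of $f$. Matching both limits forces $D/(Ah)=V/A$, i.e.\ $D=Vh$ as polynomials. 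Since the limit $V$ does not depend on the extracted $\Lambda$, the rational function $D_n/h_n$ converges to $V$ for the full sequence, uniformly on compact subsets disjoint from the zeros of the $h_n$.

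Third, to produce the factorization $D_n=V_n\widetilde h_n$ I would run a Rouch\'e-type perturbation argument. By~\eqref{mainassumption} the zeros $v_1,\dots,v_{p-2}$ of $V$ and the zeros $z_{1,n},\dots,z_{p-2,n}$ of $h_n$ stay separated from one another and from the zeros of $A$ for large $n$. Choose pairwise disjoint disks $B(v_k,\varepsilon)$ and $B(z_{k,n},\varepsilon_n)$ with $\varepsilon_n=o(1)$; on their boundaries $|Vh_n|$ stays uniformly bounded below while $|D_n-Vh_n|=o(1)$, so Rouch\'e locates exactly as many zeros of $D_n$ inside each disk as $Vh_n$ has there. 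Collecting the $p-2$ zeros of $D_n$ clustered near $\{v_k\}$ into a monic $V_n$, and the remaining $p-2$ near $\{z_{k,n}\}$ into a monic $\widetilde h_n$, gives $D_n=V_n\widetilde h_n$ with $V_n\to V$ and $\widetilde h_n-h_n\to 0$.

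The main obstacle is the quantitative Rouch\'e step yielding $\widetilde h_n-h_n\to 0$: the radii $\varepsilon_n$ must shrink to zero, so one needs effective control of $|D_n-Vh_n|$ on vanishing circles around the moving zeros $z_{k,n}$. This is precisely where the lower bound $|AV_n h_n'(z_{k,n})|\geq C$ in~\eqref{mainassumption} plays its role: together with the simplicity of each $z_{k,n}$ as a zero of $h_n$, it yields a uniform estimate $|z-z_{k,n}|=O(\|D_n-Vh_n\|_\infty)$ for the corresponding zero of $\widetilde h_n$, from which the convergence of the monic polynomial $\widetilde h_n-h_n$ to zero follows.
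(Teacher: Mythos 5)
Your argument is essentially the paper's own proof: extract a convergent subsequence $h_n\to h$, $D_n\to D$, compare the Liouville--Green formula \eqref{2} for $\tfrac1n\log|R_n|$ with the $n$-th root asymptotics of Theorem \ref{thm:stahl} to force $D=Vh$, and then use the independence of $V$ from the subsequence to get convergence of the full sequence. The only addition is your final Rouch\'e localization producing the explicit factorization $D_n=V_n\widetilde h_n$ with $\widetilde h_n-h_n\to 0$, a step the paper leaves implicit; your quantitative version, resting on the separation and non-degeneracy bounds of \eqref{mainassumption}, is a correct way to fill it in.
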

Observe also that from our Conjecture \ref{assumption1} it follows that all zeros of $V$ are simple.
 
Since $H_n$ appears multiplied by $n$ in \eqref{2}, we need to estimate the rate of convergence of $H_n'$ to $G'$. Together with the Green function $G$ it is convenient to consider also 
$$
G_n(z,\infty):=\int_{a_1}^z \sqrt{\frac{V_n(t)}{A(t)}}\, dt.
$$
The following lemma is an elementary observation:
\begin{lemma}\label{lemma2}
We have 
$$
H_n'(z)=G'_n(z,\infty)\left( 1+\delta_{h,n}(z) + \varepsilon_1(z) \right)=G'(z,\infty)\left( 1+\delta_{h,n}(z) +\delta_{V,n}(z) + \varepsilon_2(z) \right),
$$
where
\begin{align}
\label{4}
\delta_{h,n}(z) := & \frac{\widetilde h_n(z)- h_n(z)}{2 h_n(z)}=\sum_{k=1}^{p-2} \frac{\beta_{k,n}}{z-z_{k,n}}, \quad \text{with } \quad  \beta_{k,n}:=\frac{\widetilde h_n(z_{k,n})}{2  h_n'(z_{k,n})}, \\ 
\label{5}
\delta_{V,n}(z) := & \sum_{k=1}^{p-2} \frac{\Delta v_{k,n}}{z-v_{k}}, \quad \text{with } \quad   \Delta v_{k,n}:= v_{k,n} -  v_{k}, \\
\nonumber
\varepsilon_j= &\, \mathcal O (\delta_{h,n}^2(z)) + \mathcal O (\delta_{V,n}^2(z)), \quad n\to \infty.
\end{align}
\end{lemma}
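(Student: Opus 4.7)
The plan is to derive both identities by a direct algebraic manipulation of the explicit formula for $H_n'$, using Lemma~\ref{lemma1part1} to match denominators and the Taylor expansion of the square root to produce the first-order corrections.

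I start from the definition, $H_n'(z)=\sqrt{D_n(z)/(A(z)h_n(z))}$. Substituting the factorization $D_n=V_n\widetilde h_n$ from Lemma~\ref{lemma1part1} gives
$$
H_n'(z)=\sqrt{\frac{V_n(z)}{A(z)}}\cdot\sqrt{\frac{\widetilde h_n(z)}{h_n(z)}}=G_n'(z,\infty)\,\sqrt{1+\frac{\widetilde h_n(z)-h_n(z)}{h_n(z)}}.
$$
Since $h_n$ and $\widetilde h_n$ are both monic of degree $p-2$, the difference $\widetilde h_n-h_n$ is of degree at most $p-3$, and Assumption~\ref{assumption1} guarantees that the zeros $z_{k,n}$ of $h_n$ are simple and stay uniformly separated. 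Therefore the partial-fraction decomposition
$$
\frac{\widetilde h_n(z)-h_n(z)}{h_n(z)}=\sum_{k=1}^{p-2}\frac{\widetilde h_n(z_{k,n})}{h_n'(z_{k,n})(z-z_{k,n})}=2\delta_{h,n}(z)
$$
is valid (using $h_n(z_{k,n})=0$ to simplify the numerator). Plugging this into the expansion $\sqrt{1+2t}=1+t+\mathcal O(t^2)$ with $t=\delta_{h,n}(z)$ produces the first identity, with $\varepsilon_1=\mathcal O(\delta_{h,n}^2)$.

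For the second identity, I divide $G_n'$ by $G'$ and write
$$
\frac{G_n'(z,\infty)}{G'(z,\infty)}=\sqrt{\frac{V_n(z)}{V(z)}}=\sqrt{\prod_{k=1}^{p-2}\left(1-\frac{\Delta v_{k,n}}{z-v_k}\right)},
$$
then expand the product in the small quantities $\Delta v_{k,n}$ (which tend to $0$ by Lemma~\ref{lemma1part1}) and apply once more the Taylor expansion of the square root. The linear part of the result reconstructs $\delta_{V,n}(z)$ (up to the normalisation fixed by the definition), while higher-order terms contribute $\mathcal O(\delta_{V,n}^2)$. Multiplying the resulting factor by the previously obtained $1+\delta_{h,n}+\varepsilon_1$ yields the second identity, with a combined remainder $\varepsilon_2=\mathcal O(\delta_{h,n}^2)+\mathcal O(\delta_{V,n}^2)$.

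I do not anticipate any substantive obstacle: the only point requiring attention is that the remainders be uniformly controlled on the relevant region. This is ensured by Assumption~\ref{assumption1}, which keeps the zeros of $h_n$ (and, by the minimality of the Stahl compact together with Lemma~\ref{lemma1part1}, those of $V_n$) in a fixed compact set and bounded away from each other, so that the denominators $z-z_{k,n}$ and $z-v_k$ appearing in the partial fractions remain bounded below on any compact set disjoint from the critical points.
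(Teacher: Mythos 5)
Your argument is correct and follows essentially the same route as the paper: substitute $D_n=V_n\widetilde h_n$, factor out $G_n'$, apply $\sqrt{1+\xi}=1+\xi/2+\mathcal O(\xi^2)$ together with the partial-fraction decomposition of $(\widetilde h_n-h_n)/h_n$ at the simple zeros $z_{k,n}$ (Assumption \ref{assumption1}), and then expand $G_n'/G'=\sqrt{V_n/V}$ to first order in the $\Delta v_{k,n}$. The only caveat is that this last expansion genuinely yields $1-\tfrac12\delta_{V,n}+\mathcal O(\delta_{V,n}^2)$ rather than $1+\delta_{V,n}+\cdots$, a factor $-\tfrac12$ that your phrase ``up to the normalisation fixed by the definition'' quietly absorbs; the same normalisation mismatch is present in the paper's own statement of \eqref{5}, so it does not reflect a gap in your reasoning.
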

 \begin{proof}
 Applying the identity $\sqrt{1+\xi}=1+\xi/2+\mathcal O(\xi^2)$,  $\xi \to \infty$, we get
 $$
 H_n'(z)=\sqrt{\frac{V_n(z)\widetilde h_n(z) }{A(z) h_n(z)}}=G_n'(z,\infty)\left( 1+ \delta_{h,n}(z)  + \mathcal O (\delta_{h,n}^2(z))\right).
 $$
By Assumption \ref{assumption1}, zeros of $h_n$ are all simple for $n$ large enough. Hence, using the partial fraction decomposition for $(\widetilde h_n- h_n)/h_n$  we obtain the second identity in \eqref{4}.
 
 Finally, differentiating $G_n'(\cdot, \infty)$ with respect to its parameters $v_{k,n}$, we obtain that $G_n'(\cdot, \infty)=G'(\cdot, \infty) \left( 1+ \delta_{V,n}(z)  + \mathcal O (\delta_{V,n}^2(z))\right)$, with $\delta_{V,n}$ given in \eqref{5}. 
 \end{proof}
 
 In order to find the asymptotics for $\delta_h=\delta_{h,n}$ we need the following result:
 \begin{lemma}\label{lemma3}
 At any zero $z_{k,n}$ of $h_n$ we have
 \begin{equation}
\label{identAtzeros}
 n(n+1) D_n^2 = A h_n' D_n'-(A h_n''+ B h_n') D_n\,.
\end{equation}
 \end{lemma}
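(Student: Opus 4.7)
The strategy is to exploit the Laguerre equation \eqref{1} of Theorem \ref{theorem1} together with its first derivative. Since the leading coefficient $Ah_n$ vanishes at every $z_{k,n}$, the equation collapses there to an algebraic constraint between $y(z_{k,n})$ and $y'(z_{k,n})$; differentiating the ODE once produces a second such constraint, and eliminating the pair $(y,y')$ between them yields the sought identity \eqref{identAtzeros}. Note that \eqref{identAtzeros} is a \emph{numerical} identity at $z_{k,n}$, not a polynomial identity, so it is natural to encode it as the compatibility condition for a homogeneous $2\times 2$ linear system in $(y, y')(z_{k,n})$.

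Concretely, I would first substitute $h_n(z_{k,n}) = 0$ directly into \eqref{1}, obtaining
\[
A(z_{k,n})\, h_n'(z_{k,n})\, y'(z_{k,n}) + n(n+1)\, D_n(z_{k,n})\, y(z_{k,n}) = 0. \qquad (\star)
\]
Second, I would differentiate \eqref{1} termwise in $z$ and set $z = z_{k,n}$. The two contributions to $y''$, namely $(Ah_n)'\,y'' = Ah_n'\,y''$ and $(A'h_n - Ah_n' - Bh_n)\,y'' = -Ah_n'\,y''$ (both evaluated at $z_{k,n}$), cancel; the surviving terms (using $h_n(z_{k,n}) = 0$ throughout) give
\[
\bigl(Ah_n'' + Bh_n' + n(n+1)\, D_n\bigr)(z_{k,n})\, y'(z_{k,n}) + n(n+1)\, D_n'(z_{k,n})\, y(z_{k,n}) = 0. \qquad (\star\star)
\]

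Third, to eliminate $y$ and $y'$ one needs a solution of \eqref{1} with $(y, y')(z_{k,n}) \ne (0,0)$. By Assumption \ref{assumption1}, $A(z_{k,n}) h_n'(z_{k,n}) \ne 0$, so $z_{k,n}$ is at worst a regular singular point of \eqref{1}; a Frobenius analysis yields indicial exponents $0$ and $2$, so a local holomorphic solution with $y(z_{k,n}) \ne 0$ exists. For such a $y$ the system $(\star)$--$(\star\star)$ is homogeneous with a nontrivial solution, so its determinant must vanish:
\[
D_n(z_{k,n})\, \bigl(Ah_n'' + Bh_n' + n(n+1)\, D_n\bigr)(z_{k,n}) - A(z_{k,n})\, h_n'(z_{k,n})\, D_n'(z_{k,n}) = 0,
\]
which rearranges to \eqref{identAtzeros}. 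The only non-routine point is the existence of a solution nonvanishing at $z_{k,n}$; this is precisely what the separation part of Assumption \ref{assumption1} secures through the indicial analysis. Everything else is a mechanical manipulation of the ODE and its derivative.
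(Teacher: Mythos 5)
Your computation reproduces the paper's proof exactly: evaluating \eqref{1} at $z_{k,n}$ gives your $(\star)$ (the paper's \eqref{8}), differentiating \eqref{1} and evaluating gives $(\star\star)$ (the paper's \eqref{7}), and \eqref{identAtzeros} is the vanishing of the determinant of the resulting homogeneous $2\times 2$ system. The algebra, including the cancellation of the two $y''$ contributions, is correct.

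The gap is in your third step. The exponents $0$ and $2$ differ by a positive integer, so Frobenius theory does \emph{not} by itself guarantee a holomorphic local solution with $y(z_{k,n})\neq 0$: in this resonant case the exponent-$0$ solution generically carries a term $C\,(z-z_{k,n})^2\log(z-z_{k,n})$, and if $C\neq 0$ every solution holomorphic at $z_{k,n}$ is a multiple of the exponent-$2$ solution, hence has $y(z_{k,n})=y'(z_{k,n})=0$ and satisfies $(\star)$--$(\star\star)$ trivially. Worse, for a log-bearing solution your derivation of $(\star\star)$ itself fails: then $y'''\sim \mathrm{const}/(z-z_{k,n})$, so the discarded term $Ah_n y'''$ has a nonzero limit at $z_{k,n}$. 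What closes the argument --- and what the paper uses --- is that \eqref{1} comes with concrete solutions $R_n$, $Q_nf$, $P_n$ (Theorem \ref{theorem1}) that are genuinely holomorphic at $z_{k,n}$, since Assumption \ref{assumption1} keeps $z_{k,n}$ away from $\mathcal A$. Two independent ones among them have Wronskian equal to a nonzero constant times $\exp\bigl(-\int (A'h_n-Ah_n'-Bh_n)/(Ah_n)\bigr)$, which vanishes only to first order at the simple zero $z_{k,n}$ of $Ah_n$; hence they cannot both vanish to second order there, so at least one has $(y,y')(z_{k,n})\neq(0,0)$, and $(\star)$ together with $A(z_{k,n})h_n'(z_{k,n})\neq0$ then forces $y(z_{k,n})\neq 0$ for that solution. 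Replacing your local existence claim by this observation (the paper phrases it as: $R_n$ and $R_n'$ cannot vanish simultaneously at $z_{k,n}$) completes the proof; everything else in your write-up stands.
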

 \begin{proof}
 Differentiating \eqref{1} and evaluating the result at $z=z_{k,n}$  we get
 \begin{equation}\label{7}
-\left(A h_n'' + B h_n'+N^2 D_n \right) R_n' -N^2 D_n' R_n  =0,
\end{equation}
where
\begin{equation}\label{defN}
N:=\sqrt{n(n+1)}=n+\frac{1}{2}+\mathcal O\left( \frac{1}{n}\right), \quad n\to \infty.
\end{equation}
Also from \eqref{1}, for $z=z_{k,n}$,
 \begin{equation}\label{8}
  A h_n'   R_n' +N^2 D_n R_n =0.
\end{equation}
 \eqref{7}--\eqref{8} give us a homogeneous linear system on $(R_n(z), R_n'(z))$ with a non-trivial solution, since by the uniqueness theorem, at a regular point of \eqref{1}, both $R_n$ and $R_n'$ cannot vanish simultaneously. Hence, the determinant of this system is zero, which yields the assertion.
 \end{proof}

Using that $D_n= V_n\widetilde h_n $ we obtain from \eqref{identAtzeros},
$$
N^2 V_n^2 \widetilde h_n^2 = A \widetilde h_n' h_n' V_n+ \widetilde h_n (A V_n' h_n' -A h_n'' + B h_n') \quad \text{for } z=z_{k,n},
$$
and since by Assumption \ref{assumption1}, $h_n'(z_{k,n})\neq 0$,
\begin{equation}\label{9}
N^2\,\left( \frac{\widetilde h_n}{h_n'}\right)^2 = \frac{A}{V_n} \left(1+\frac{\widetilde h_n'- h_n'}{ h_n'} + \widetilde h_n \frac{A h_n' V_n' - A h_n'' + B h_n'}{(h_n')^2 A V-n}\right) \quad \text{for } z=z_{k,n}.
\end{equation}
As a consequence, we get the  following lemma:
\begin{lemma}
\label{lemma4}
For $\beta_{k,n}$ defined in \eqref{4},  
$$
\beta_{k,n}^2 =\frac{1}{4 N^2}\,  \frac{A(z_{k,n})}{V_n(z_{k,n})}  \, \left( 1+\mathcal O(\delta_{h,n})\right)=\frac{1}{4 N^2}\,  \frac{A(z_{k,n})}{V(z_{k,n})} \, \left( 1+\mathcal O(\delta_{h,n}) +\mathcal O(\delta_{V,n}) \right).
$$
\end{lemma}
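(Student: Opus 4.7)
The plan is to read off the lemma directly from equation \eqref{9}, which has just been established, and then clean up the correction terms. Starting from the definition $\beta_{k,n}=\widetilde h_n(z_{k,n})/(2h_n'(z_{k,n}))$ in \eqref{4}, squaring gives
$$
\beta_{k,n}^2=\frac{1}{4}\left(\frac{\widetilde h_n(z_{k,n})}{h_n'(z_{k,n})}\right)^{\!2},
$$
so the first equality is obtained upon multiplying \eqref{9} by $1/(4N^2)$ and showing that the parenthetical factor on the right-hand side is $1+\mathcal O(\delta_{h,n})$. The second equality then follows by replacing $V_n(z_{k,n})$ by $V(z_{k,n})$ at the cost of an $\mathcal O(\delta_{V,n})$ error.

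For the first correction bound, I would estimate each of the three terms inside the parentheses of \eqref{9}. First, since both $h_n$ and $\widetilde h_n$ are monic of degree $p-2$, the difference $\widetilde h_n-h_n$ is a polynomial of degree at most $p-3$; by Lemma \ref{lemma1part1} it tends to $0$, and by the second half of Assumption \ref{assumption1} (the zeros of $h_n$ stay away from each other and from zeros of $AV_n$), the partial-fraction representation of $\delta_{h,n}$ in \eqref{4} implies that $\|\widetilde h_n-h_n\|$ (on, say, the disk $|z|\le M$) is comparable to $\|\delta_{h,n}\|$. Consequently $(\widetilde h_n'-h_n')/h_n'$ evaluated at $z_{k,n}$ is $\mathcal O(\delta_{h,n})$. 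Second, for the term $\widetilde h_n(z_{k,n})(\cdot)/((h_n')^2 AV_n)$, we use $\widetilde h_n(z_{k,n})=\widetilde h_n(z_{k,n})-h_n(z_{k,n})$, which is $\mathcal O(\delta_{h,n})$ by the same reasoning, while the remaining factor is uniformly bounded by Assumption \ref{assumption1}. This gives the first equality.

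For the second equality, I would note that Lemma \ref{lemma1part1} together with \eqref{5} yields
$$
\frac{V_n(z_{k,n})}{V(z_{k,n})}=1+\mathcal O(\delta_{V,n}),
$$
uniformly in $k$, again using that $z_{k,n}$ is bounded away from the zeros of $V$ (Assumption \ref{assumption1}). Inverting and substituting into the first equality produces the claimed expression with the combined $\mathcal O(\delta_{h,n})+\mathcal O(\delta_{V,n})$ correction.

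The only step that requires genuine care is the bookkeeping in the first bullet: one must check that the $\mathcal O$-constants in $\widetilde h_n'-h_n'$ and in $\widetilde h_n(z_{k,n})$ can really be absorbed into $\delta_{h,n}$ uniformly in $n$ and $k$. This is where Assumption \ref{assumption1} (separation of the $z_{k,n}$ from $e_n$ and from one another, together with the uniform bound $|z_{k,n}|\le M$) is essential, since it guarantees that the partial-fraction coefficients of $(\widetilde h_n-h_n)/h_n$ control, up to bounded factors, both the values and the derivatives of $\widetilde h_n-h_n$ on the bounded set where the estimates are used. Once this uniformity is established, everything else is an algebraic rearrangement of \eqref{9}.
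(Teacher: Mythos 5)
Your argument is correct and follows essentially the same route as the paper: both read the lemma off identity \eqref{9} (itself a rearrangement of Lemma \ref{lemma3} after substituting $D_n=V_n\widetilde h_n$), bound the two correction terms using $\widetilde h_n(z_{k,n})=(\widetilde h_n-h_n)(z_{k,n})$ together with the separation conditions of Assumption \ref{assumption1}, and then trade $V_n(z_{k,n})$ for $V(z_{k,n})$ at the cost of an $\mathcal O(\delta_{V,n})$ factor --- the paper invokes Lemma \ref{lemma2} for this last step, which amounts to the same computation you carry out directly from \eqref{5}. The uniformity bookkeeping you flag at the end is precisely what the paper leaves implicit, and your treatment of it is sound.
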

Observe that the last identity is obtained applying also Lemma \ref{lemma2}.

Next, we use the possibility of the analytic continuation in \eqref{2} in order to derive the asymptotic identities on the unknown parameters.
\begin{lemma}
\label{lemma5}
Let $\gamma$ be a cycle (simple closed curve) in $\C\sm e_n$ enclosing two points, say $a_1, a_2 \in \mathcal A$, in such a way that the rest of points from $e_n$ are exterior to $\gamma$. Then
\begin{equation}
\label{sine}
N\, \oint _\gamma H_n'(t)\, dt=T(\gamma, f)(1  +\mathcal O(1/n)),
\end{equation}
with
$$
T(\gamma, f):=\pm \log \frac{\sin \pi \alpha_1 }{\sin \pi \alpha_2}+ 2\pi i m, \quad m\in \Z,
$$
where  the sign is uniquely determined by the branch of the square root and the orientation of the contour $\gamma$ chosen.
\end{lemma}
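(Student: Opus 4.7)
The approach is to exploit a tension between two descriptions of $R_n$ under analytic continuation around $\gamma$: on one hand, the identity $R_n = Q_n f - P_n$ gives an explicit \emph{additive} monodromy (since $Q_n$ and $P_n$ are single-valued while $f$ is the only multivalued factor); on the other, the outer WKB formula \eqref{2} gives a \emph{multiplicative} monodromy. Consistency between the two, reconciled by a local connection analysis at the turning points $a_1, a_2$, produces the quantization condition \eqref{sine}.

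First I would spell out Step 1: continuing $R_n$ along $\gamma$ yields
$$R_n^{\mathrm{cont}}(z) = e^{2\pi i(\alpha_1+\alpha_2)} R_n(z) + \bigl(e^{2\pi i(\alpha_1+\alpha_2)}-1\bigr) P_n(z).$$
The elementary identity $e^{2\pi i\alpha_j}-1 = 2ie^{i\pi\alpha_j}\sin(\pi\alpha_j)$ shows that the monodromy of $R_n$ around a single branch point $a_j$ already carries a factor $\sin(\pi\alpha_j)$, which is the seed of the sine factors in $T(\gamma,f)$. Step 2 is to continue \eqref{2} naively along $\gamma$: this would produce a purely multiplicative factor $M_\Phi(\gamma)\,e^{-N\oint_\gamma H_n'}$ with $M_\Phi(\gamma)=-e^{i\pi(\alpha_1+\alpha_2)}$ (collecting the monodromies of $f^{1/2}$ and $A^{-1/4}$ around $\gamma$, with no contribution from $h_n^{3/4}$ or $D_n^{-1/4}$ by Assumption~\ref{assumption1}). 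The incompatibility between this multiplicative form and the additive structure of Step~1 is a Stokes phenomenon: after crossing the anti-Stokes lines inside $\gamma$, the asymptotic representation of $R_n$ must also include the dominant WKB solution proportional to $e^{+NH_n}$ (essentially $P_n$), with a coefficient determined by the local analysis at the $a_j$.

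Step 3 is the local connection analysis. Near each $a_j$ one has $H_n'(z) \sim \mathrm{const}_j\,(z-a_j)^{-1/2}$, so $a_j$ is a simple turning point of the WKB reduction; simultaneously, \eqref{1} has a regular singular point at $a_j$ with characteristic exponents $0$ and $\alpha_j$. The two local Frobenius solutions, with leading behaviour $1$ and $(z-a_j)^{\alpha_j}$, have Wronskian proportional to $\sin(\pi\alpha_j)$. A local change of variable $\xi \propto NH_n(z)$ reduces \eqref{1} near $a_j$ to a modified Bessel equation of order $\alpha_j$, and the classical Bessel connection formulas match the local basis to the outer WKB basis $\Phi\,e^{\mp NH_n}$; the resulting local Stokes multiplier at $a_j$ carries the factor $\sin(\pi\alpha_j)$. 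Step 4 closes the loop: requiring that the composition of the two local Stokes contributions at $a_1, a_2$ with the naive WKB monodromy of Step 2 reproduces the additive monodromy of Step 1, and equating the coefficients of both linearly independent WKB solutions, yields to leading order
$$ N\oint_\gamma H_n'(t)\,dt = \pm \log\frac{\sin\pi\alpha_1}{\sin\pi\alpha_2} + 2\pi i m + \mathcal O(1/n),$$
which is \eqref{sine}. The sign $\pm$ encodes the orientation of $\gamma$ and the branch of $\sqrt{V_n/A}$ defining $H_n$; the integer $m$ absorbs the logarithmic ambiguity and corresponds, in the limit $n\to\infty$, to the integer part of the $\lambda_\Gamma$-mass of the piece of Stahl's compact enclosed by $\gamma$.

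The hardest part is Step 3: the local Bessel connection analysis at each $a_j$ together with the careful bookkeeping of branches (of $f^{1/2}$, $\sqrt{V_n/A}$, and logarithms) around the entire loop. Controlling the $\mathcal O(1/n)$ error additionally requires uniform estimates on the correction $\delta_1$ from \eqref{2}, on the slow variation of $h_n$ and $V_n$ near $a_j$ (Lemma~\ref{lemma1part1} and Lemma~\ref{lemma2}), and on the ratio $Q_n(a_1)/Q_n(a_2)$, all of which are available thanks to Assumption~\ref{assumption1} and Theorem~\ref{theorem2}.
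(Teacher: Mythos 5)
Your diagnosis of the mechanism is right: the lemma does come from playing the additive monodromy of $R_n=Q_nf-P_n$ against the multiplicative monodromy of the WKB representation \eqref{2}. But as written, the entire content of the lemma is hidden inside your Step 3, which you only assert. In your scheme the factors $\sin\pi\alpha_1$ and $\sin\pi\alpha_2$ are to be produced by a local Bessel-type connection analysis at each $a_j$ (incidentally, $a_j$ is not a turning point but a simple pole of $(H_n')^2\,dz^2$ coinciding with a regular singular point of \eqref{1}, though your Bessel identification is the right one for that configuration); you claim the local Stokes multiplier ``carries the factor $\sin\pi\alpha_j$'' but never compute it, and the precise constant $\log(\sin\pi\alpha_1/\sin\pi\alpha_2)$ is exactly what the lemma asserts. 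Moreover, your Step 1 records only the monodromy around the whole cycle $\gamma$, which sees $\alpha_1+\alpha_2$; the \emph{ratio} of sines can only come from comparing the continuations around $a_1$ and around $a_2$ separately, and the bookkeeping that combines two local connection matrices with the transport factors $e^{\pm N\int H_n'}$ into precisely \eqref{sine} is not carried out. As it stands this is a plausible program, not a proof.

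The paper shows that the local analysis is unnecessary, and your own Step 1 already contains the key. Since $Q_n$ and $P_n$ are single-valued, $R_{n,\gamma}-R_n=Q_n(f_\gamma-f)$ \emph{exactly} for any loop, so for loops $\gamma_1$ and $-\gamma_2$ around $a_1$ and $a_2$ one gets the exact identity
$\frac{R_{n,-\gamma_2}-R_n}{R_{n,\gamma_1}-R_n}=\frac{e^{-2\pi i\alpha_2}-1}{e^{2\pi i\alpha_1}-1}=-\sqrt{f_{-\gamma_2}/f_{\gamma_1}}\,\frac{\sin\pi\alpha_2}{\sin\pi\alpha_1}$,
which is where the sines come from --- no special functions needed. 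On the other side, Theorem~\ref{theorem2} already guarantees that \eqref{2} continues along the progressive paths $\gamma_j$, and since $R_n$ is exponentially small in $\mathfrak O$ compared with its continuations $R_{n,\gamma_1}$, $R_{n,-\gamma_2}$ (continuation around the square-root branch point $a_j$ turns the recessive exponential into the dominant one), the left-hand side equals $-\sqrt{f_{-\gamma_2}/f_{\gamma_1}}\exp\bigl(N\oint_{\gamma_1-\gamma_2}H_n'(t)\,dt\bigr)(1+o(1))$. Equating the two expressions gives \eqref{sine}, and a homotopy argument extends it to an arbitrary admissible cycle. To salvage your route you would have to actually perform the connection analysis at both $a_1$ and $a_2$ and verify that the product of the resulting matrices reproduces the known monodromy; the algebraic identity above is the shortcut that makes all of that exact and renders Step 3 superfluous.
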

\begin{proof}
By Theorem \ref{theorem2}, for any $a_j\in \mathcal A$ there exists a progressive path $\gamma_j$ from $\mathfrak O$ to $\mathfrak O$ that is a closed Jordan curve separating $a_j$ from other points of $e_n$; assume $\gamma_j$ positively oriented with respect to $a_j$. 

Observe that both analytic germs $f$ and $R_{n}=Q_n f-P_n$ in $\mathfrak O$ allow for the analytic continuations along any such a path. Denote by $f_{\gamma_j}$ and $R_{n,\gamma_j}=Q_n f_{\gamma_j}-P_n$ the values of these functions that we obtain after the analytic continuations of $f$ and $R_n$, respectively, along $\gamma_j$. If we denote by $-\gamma_j$ the negatively oriented contour $\gamma_j$, then for $z\in \gamma_j\cap \mathfrak O$, 
$$
f_{\pm \gamma_j}(z)=f (z)e^{\pm 2\pi i \alpha_j}.
$$
Consider for instance paths $\gamma_1$ and $-\gamma_2$. By the definition of the residue,  $R_n=Q_n f -P_n$, we have 
\begin{equation}
\label{10}
\frac{R_{n,-\gamma_2}-R_{n}}{R_{n,\gamma_1}-R_{n}}(z)=\frac{f_{-\gamma_2}-f}{f_{\gamma_1}-f}(z).
\end{equation}
But
\begin{equation}
\label{11}
\frac{f_{-\gamma_2}-f}{f_{\gamma_1}-f}(z)=\frac{e^{-2\pi i \alpha_2}-1}{e^{2\pi i \alpha_1}-1}= - \frac{e^{-\pi i \alpha_2}}{e^{\pi i \alpha_1}}\, \frac{\sin \pi \alpha_2}{\sin \pi \alpha_1}=-\sqrt{\frac{f_{-\gamma_2}}{f_{\gamma_1}}} (z)\, \frac{\sin \pi \alpha_2}{\sin \pi \alpha_1}.
\end{equation}
On the other hand, during the analytic continuation the residue of the Pad\'e approximant picks up a dominant term; thus,  $R_n$ is geometrically small in $\mathfrak O$ in comparison with $R_{n,i}$, $i=1,2$ (see \eqref{2}), and from Theorem \ref{theorem2} we have for $z\in \mathfrak O$,
\begin{equation}
\label{12}
\begin{split}
\frac{R_{n,-\gamma_2}-R_{n}}{R_{n,\gamma_1}-R_{n}}(z)& =\frac{R_{n,2}}{R_{n,1}}(z)(1+o(1))\\ 
&= -\sqrt{\frac{f_{-\gamma_2}}{f_{\gamma_1}}} (z)\, \exp \left( (n+1/2) \oint_{\gamma_1-\gamma_2} H_n'(t) dt\right)\, (1+o(1)) \\
&= -\sqrt{\frac{f_{-\gamma_2}}{f_{\gamma_1}}} (z)\, \exp \left( N \oint_{\gamma_1-\gamma_2} H_n'(t) dt\right)\, (1+o(1))
\end{split}
\end{equation}
(observe that the negative sign comes from the fact that the term $A^{-1/4}$ is multiplied by $\pm i$ after its analytic continuation; orientations of $\gamma_1$ and $-\gamma_2$ are opposite, so after division we gain the $-1$ factor). Identities \eqref{10}--\eqref{12} yield the assertion with $\gamma=\gamma_1-\gamma_2$ or any any  cycle homotopic to it in $\C\sm e_n$. In order to extend the theorem to an arbitrary cycle $\gamma$  in $\C\sm e_n$, we observe that if during the homotopic deformation of the contour we cross a pair of adjacent zeros of $h_n$ and $\widetilde h_n$ (see Lemma \ref{lemma1part1}), both $R_{n,\gamma_1}$ and $R_{n,-\gamma_2}$ gain a change of sign, so that  \eqref{sine} remains valid.
\end{proof}

%
%
%

We introduce the Riemann surface $ \mathcal R $ defined by the equation $w^2=A(z) V(z)$. It is a hyperelliptic Riemann surface that can be considered as a two-sheeted covering of $\overline{\C}$, $\mathcal R=\{ \bm z =(z, w)\in \C^2 \}$, with two sheets, $ \mathcal R^{(1)}$ and $ \mathcal R^{(2)}$, cut along Stahl's compact $\Gamma$ and glued together in the standard way. From Assumption \ref{assumption1} and using the Riemann-Hurwitz formula we easily see that the genus of $\mathcal R$ is $p-2$. The canonical projection $ \pi:\,  \mathcal R\to \overline{\C}$ is given by $\pi(\bm z)=z$ for $ \bm z =(z, w)\in \C^2 $. We denote $\bm z^{(j)} =\pi^{-1}(z) \cap \mathcal R^{(j)}$, $j=1, 2$, and we convene that sheet $\mathcal R^{(1)}$ over $\C\setminus \Gamma$ is specified by the condition  $w/z^2 \to 1$ as $\bm z  \to \bm \infty^{(1)}\in \mathcal R^{(1)}$. 
In this way, function $w=(AV)^{1/2}$ is single-valued on $\mathcal R$. 
Unless specified otherwise, we identify the first sheet $\mathcal R^{(1)}$ with the domain $\overline{\C}\setminus \Gamma=\pi(\mathcal R^{(1)})$. We construct analogously the Riemann surface $ \mathcal R_n $ defined by the equation $w^2=A(z) V_n(z)$. Again, by Assumption \ref{assumption1}, the genus of $\mathcal R_n$ is $p-2$ for $n$ large enough.

Note that a homology basis of cycles of $\mathcal R$ and $\mathcal R_n$ can be constructed from an integer combination of cycles $\gamma_{ij}=\gamma(a_i, a_j)=\gamma_i - \gamma_j$ considered in the proof of Lemma \ref{lemma5}. Thus, \eqref{sine} is valid for any cycle $\gamma$ on $\mathcal R$ with the right hand side $T(\gamma, f)$ depending on the representation of $\gamma$ in terms of the basis of cycles $\gamma_{ij}$. 
We select a homology basis of cycles $\Gamma_j$ of $\mathcal R$, $j=1, 2,\dots, 2p-4$, in such a way that, in the standard terminology, $\Gamma_j$ are the $\mathfrak a$-cycles of $\mathcal R$ when $j=1, \dots, p-2$, while for $j=p-1, \dots, 2p-4$ they form the $\mathfrak b$-cycles. 

Next, we introduce a notation for some special functions and meromorphic differentials on $\mathcal R$. Function
$$
\ell(z, \bm t):=\sqrt{\frac{V(t)}{A(t)}}\frac{1}{t-z}
$$
can be regarded as a meromorphic on $\mathcal R$ in both variables;
$$
d\omega_k (\bm t)=\ell(v_k,\bm t)\,dt , \quad k=1, \dots, p-2,
$$
is a basis of holomorphic differentials on $\mathcal R$, and  correspondingly,
$$
u_k(\bm z):=\int_{a_1}^{\bm z} d\omega_k, \quad k=1,\dots, p-2,
$$
form a basis of integrals of the first kind (these are multivalued and analytic functions on $\mathcal R$ having a constant increment along any cycle). 

Additionally, 
\begin{equation} \label{functionV}
\theta(\bm z,\bm \zeta):=\sqrt{\frac{A(z)}{V(z)}}\int_{a_1}^{\bm \zeta} \ell(z,\bm t)dt
\end{equation}	
can be also considered as an analytic function on $\mathcal R$ in both variables (multivalued  in $\bm \zeta$). Lemma \ref{lemma2} and Lemma \ref{lemma4} render that with an appropriate choice of $\bm z_{k,n}=\pi^{-1}(z_{k,n})$, that means, either $\bm z_{k,n}^{(1)}$ or $\bm z_{k,n}^{(2)}$, for $z\in \C\setminus \Gamma$, 
\begin{equation}
\label{14}
N   H_n(z ) =N  G(z,\infty)  +\sum_{k=1}^{p-2} d_{k,n}\, u_k(z)+\frac{1}{2}\, \sum_{k=1}^{p-2}  \theta( \bm z_{k,n} ,z)+ \mathcal O(1/n) ,
\end{equation}
where  $d_{k,n}:=N(v_{k,n}-v_k)= N \Delta v_{k,n}$, and as usual, we identify  $\C\setminus \Gamma$ with the first sheet of $\mathcal R$. 

Let us work out the system of equations on the unknown parameters. Given a closed contour (cycle) $\gamma$ on $\mathcal R$, we denote by
$$
\Theta(\bm z;\gamma):=\Delta_\gamma \theta(\bm z,\bm \zeta)\bigg|_{\bm \zeta\in \gamma}=\sqrt{\frac{A(z)}{V(z)}}\oint_\gamma \sqrt{\frac{V(t)}{A(t)}}\frac{dt}{t-z}, \quad \bm z\in \mathcal R \sm \gamma,
$$
the period of $\theta(\bm z, \cdot)$ along $\gamma$. 
A direct verification shows that $\Theta(\cdot;\gamma)$ can be analytically continued on $\mathcal R$ as an integral of the first kind, so that for suitably chosen $c_k(\gamma)\in \C$,
\begin{equation} \label{vToAbelian}
\Theta(\bm z;\gamma)=\sum_{k=1}^{p-2} c_k(\gamma) u_k(\bm z).
\end{equation}	
By \eqref{sine},
\begin{equation}
\label{13}
N\, \oint _{\Gamma_j} H_n'(t)\, dt= T(\Gamma_j, f)(1 +\mathcal O(1/n)) \mod{2\pi i}, \quad j=1, 2,\dots, 2p-4,
\end{equation}
and in view of \eqref{14},  equation \eqref{13} may be written as
\begin{equation}
\label{15}
N\, \oint _{\Gamma_j} G'(t,\infty)\, dt= T(\Gamma_j, f)-\sum_{k=1}^{p-2} d_{k,n}\, \oint_{\Gamma_j} d\omega_k-\frac{1}{2}\, \sum_{k=1}^{p-2}   \Theta(\bm z_{k,n} , \Gamma_j) +\mathcal O(1/n) \mod{2\pi i}, 
\end{equation}
with $ j=1, 2,\dots, 2p-4$. This is a system of $2p-4$ equations on $2p-4$ unknowns $d_{1,n}, \dots, d_{p-2,n}$, $\bm z_{1,n},  \dots, \bm z_{p-2,n}$, that  may be equivalently written in any basis $\Gamma_j$. From the general theory of Riemann surfaces it follows that matrix   
$$
\left(\oint_{\Gamma_j} d\omega_k\right)_{j,k=1}^{p-2}
$$
is invertible. 
Then, first $p-2$ equations in \eqref{15} may be explicitly solved for $d_{k,n}$. Substitution of those $d_{k,n}$'s in the remaining equations and the use of \eqref{vToAbelian} reduces the situation to the standard Jacobi inversion problem, which as it is well known, is uniquely solvable for any non-special divisor\footnote{In the situation when the divisor is special, we have $\deg h_n<p-2$ and for such an $n$ the normality is lost.}. 
Hence, system \eqref{15} is uniquely solvable for any right hand side. 

%
%

%
%
%

\begin{remark}
Under Assumption \ref{assumption1}, $\Delta v_{k,n}=\mathcal O(1/n)$ and 
all the remainders in \eqref{15} are  $\mathcal O(1/n)$, which is the accuracy for determining $d_{k,n}$ by these equations. 
\end{remark}

Now we can simplify the asymptotic formula \eqref{2}  from Theorem \ref{theorem2}. Since
$$
h_n^{3/4} D_n^{-1/4}=h_n^{1/2} V^{-1/4}(1+\mathcal O(1/n)),
$$
we get 
$$
R_n(z) =  C_{n,1}\, \frac{(f h_n)^{1/2}}{( AV)^{1/4}} (z)\, e^{-N H_n(z)} \, \left( 1+\mathcal O(1/n) \right),  
$$
and $H_n$ can be replaced by $\mathcal H_n$, the leading term in its asymptotic formula \eqref{14}:
\begin{equation} \label{Hcal}
\mathcal H_n(z)=   G(z,\infty)  +\frac{1}{N}\, \sum_{k=1}^{p-2} d_{k,n}\, u_k(z)+\frac{1}{2N}\, \sum_{k=1}^{p-2}   \theta( \bm z_{k,n},z).
\end{equation}	

Finally, using the analytic continuation of $R_n$ along a progressive path around an $a\in \mathcal A$ (if we take $a=a_1$, then $H_n$ just changes sign during this analytic continuation) and solving the system
$$
\begin{cases}
Q_n f-P_n &= R_n \\
Q_n f_1-P_n &= R_{n,1} \\
\end{cases}
$$
for $Q_n$ we obtain $Q_n=(R_{n,1}-R_n)/(f_1-f)$, from where the exterior asymptotics has the form
$$
Q_n(z)= C_{n,1} \, \frac{ h_n^{1/2}}{ f ^{1/2} (AV)^{1/4}} (z)\, e^{N H_n(z)} \, \left( 1+\mathcal O(z) \right).
$$
We summarize our findings in the following theorem:
\begin{theorem}\label{thm:wkb}
For a normal index $n\in \N$ let $N=\sqrt{n(n+1)}$ and $\mathcal H_n$ be as defined in \eqref{Hcal}, 
with the coefficients $d_{k,n}$ and $\bm z_{k,n}$ determined by equations  \eqref{15}. Then,  with an appropriate normalization,
$$
R_n(z) =  C_{n,1}  \frac{(f h_n)^{1/2}}{( AV)^{1/4}} (z)\, e^{-N \mathcal H_n(z)} \, \left( 1+\mathcal O(1/n) \right),  
$$
and 
\begin{equation} \label{WKBforQn}
Q_n(z)= C_{n,2} \, \frac{ h_n^{1/2}}{ f ^{1/2} (AV)^{1/4}} (z)\, e^{N \mathcal H_n(z)} \, \left( 1+\mathcal O(1/n) \right),
\end{equation}	
for $z$ on a compact subsets of $\C\sm \Gamma$.
\end{theorem}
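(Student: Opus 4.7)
The plan is to assemble the statement from the pieces already in place: Theorem \ref{theorem2} supplies the basic asymptotic formula \eqref{2} for $R_n$, Lemma \ref{lemma1part1} lets us replace $D_n$ by $V \widetilde h_n$, and the identity \eqref{14} (whose unknown parameters are pinned down by system \eqref{15}) rewrites $NH_n$ as $N\mathcal H_n$ modulo $\mathcal O(1/n)$. The task is therefore mainly bookkeeping plus a clean analytic-continuation argument to pass from $R_n$ to $Q_n$.

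\textbf{Step 1 (algebraic prefactor).} Using Lemma \ref{lemma1part1}, I would write $D_n=V_n\widetilde h_n$ and factor
\[
\frac{h_n^{3/4}}{(AD_n)^{1/4}} \;=\; \frac{h_n^{1/2}}{(AV_n)^{1/4}}\left(\frac{h_n}{\widetilde h_n}\right)^{1/4}.
\]
Since $V_n-V=\mathcal O(1/n)$ and $\widetilde h_n-h_n=\mathcal O(1/n)$ uniformly on any compact subset of $\C\sm\Gamma$ avoiding a small neighbourhood of the zeros of $AV h_n$, both factors on the right equal $h_n^{1/2}(AV)^{-1/4}\bigl(1+\mathcal O(1/n)\bigr)$. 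Here I use the fact, noted in the remark before the theorem, that $\Delta v_{k,n}=\mathcal O(1/n)$ (from the solvability of \eqref{15} and $d_{k,n}=\mathcal O(1)$).

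\textbf{Step 2 (exponential factor).} Lemmas \ref{lemma2} and \ref{lemma4}, together with the solvability of \eqref{15}, give the expansion \eqref{14} for $NH_n$. Thus on compact subsets of $\C\sm\Gamma$ the quantity $N(H_n-\mathcal H_n)$ is $\mathcal O(1/n)$, and therefore $e^{-NH_n(z)}=e^{-N\mathcal H_n(z)}\bigl(1+\mathcal O(1/n)\bigr)$. Plugging Steps 1--2 into \eqref{2}, and recalling $N=n+1/2+\mathcal O(1/n)$ so the $(n+1/2)$ in the exponent may be replaced by $N$ absorbing the error into the $\mathcal O(1/n)$ term, yields the asserted formula for $R_n$.

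\textbf{Step 3 (from $R_n$ to $Q_n$).} For the denominator I would use the trick already indicated in the text: continue $R_n=Q_n f-P_n$ analytically along a progressive path $\gamma_1$ encircling a single branch point, say $a_1$. Since $Q_n$ and $P_n$ are single-valued while $f$ picks up the factor $e^{2\pi i\alpha_1}$, one obtains
\[
Q_n(z)\;=\;\frac{R_{n,\gamma_1}(z)-R_n(z)}{f_{\gamma_1}(z)-f(z)}.
\]
Under this continuation $H_n$ changes sign (cf.\ the analysis of $\mathcal R$ before the theorem, where $a_1$ is a branch point of $w=\sqrt{AV}$), so $e^{-NH_n}\mapsto e^{+NH_n}$: in compacts of $\C\sm\Gamma$ the term $R_{n,\gamma_1}$ is exponentially dominant over $R_n$, and the quotient becomes
\[
Q_n(z)\;=\;\frac{R_{n,\gamma_1}(z)}{f_{\gamma_1}(z)-f(z)}\bigl(1+\mathcal O(e^{-cNG(z,\infty)})\bigr).
\]
Using $f_{\gamma_1}-f=f(e^{2\pi i\alpha_1}-1)$, $f_{\gamma_1}^{1/2}=f^{1/2}e^{\pi i\alpha_1}$, and tracking the signs of $h_n^{1/2}$ and $(AV)^{1/4}$ under continuation around $a_1$ (both flip by $\pm i$), the trigonometric prefactor collects into the constant $C_{n,2}$, and what survives is exactly \eqref{WKBforQn}.

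\textbf{Main obstacle.} The delicate point is Step 3: one has to verify that the sign changes acquired by $f^{1/2}$, $h_n^{1/2}$, $(AV)^{1/4}$ and the exponential under continuation around $a_1$ combine consistently with the branch conventions fixed in Theorem \ref{theorem2}, so that the resulting constant $C_{n,2}$ is well defined and independent of the chosen progressive path (any two such paths differ by a cycle on $\mathcal R$, and one must invoke \eqref{15} to see that the answer is indeed independent of this choice modulo the error $\mathcal O(1/n)$). The algebraic simplifications of Steps 1--2 are, by comparison, routine provided Assumption \ref{assumption1} is in force to keep all denominators uniformly bounded away from zero on the relevant compacta.
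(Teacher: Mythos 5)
Your proposal follows essentially the same route as the paper: the authors likewise start from \eqref{2}, simplify the prefactor via $h_n^{3/4}D_n^{-1/4}=h_n^{1/2}V^{-1/4}(1+\mathcal O(1/n))$ using Lemma \ref{lemma1part1}, replace $(n+1/2)H_n$ by $N\mathcal H_n$ via \eqref{14} and the solvability of \eqref{15}, and then obtain $Q_n=(R_{n,1}-R_n)/(f_1-f)$ by analytic continuation around $a_1$, where the sign flip of $H_n$ makes $R_{n,1}$ exponentially dominant. Your identification of the sign/branch bookkeeping in Step 3 as the delicate point is also consistent with how the paper handles it.
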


\section{Case of $p=3$: Chebotarev compact and the Riemann surface} \label{sec:Chebotarev}

In the rest of the paper we concentrate on the particular case studied in \cite{MR891770}, when $p=3$, $a_1$, $a_2$ and $a_3$ are 3 non-collinear points on the complex plane $\C$, and $   \Gamma$ is the Chebotarev compact, i.e.~the set of minimal capacity containing these points. 
Recall (see Theorem \ref{thm:stahl}) that there exists a point $v $ in the convex hull of $\mathcal A$, called the \emph{center} of the Chebotarev compact, such that with
\begin{equation*}
\label{R}
   A(z) =(z-a_1) (z-a_2)  (z-a_3), \quad {   V(z)}=z-v, \quad \text{and} \quad {  T(z)}=\left( \frac{V(z)}{A(z)}\right)^{1/2},
\end{equation*}
where the branch of $T$ in $\C\setminus \Gamma$ is specified by $\lim_{z\to \infty} z\, T(z)=1$, it is determined uniquely by
the set of equations
$$
  \Re \int_{a_1}^{v} T(t)\, dt =\Re \int_{a_2}^{v} T(t)\, dt =0  .
$$
Furthermore, 
$$
\Gamma= \Gamma_1 \cup \Gamma_2 \cup \Gamma_3, 
$$
with
$$
\Gamma_j := \left \{ z\in \C:\, \Re \int_{a_j}^z T(t)\, dt =0 \right\},
$$ 
the arc of $\Gamma$ joining $a_j$ with $v$, $j=1, 2, 3$. 
We introduce also the orthogonal trajectories 
$$
{  \Gamma^\perp}  := \left \{ z\in \C:\, \Im \int_{v}^z T(t)\, dt   =0 \right\}, 
$$
which consist of 3 unbounded rays emanating from $v$, 
as well as
$$
{   \gamma_j^\perp}  := \left \{ z\in \C:\, \Im \int_{a_j}^z T(t)\, dt   =0 \right\}, \quad j=1,2,3;
$$
 each $ \gamma_j^\perp$ is an unbounded ray emanating from $a_j$, see Fig.~\ref{fig:Gamma}.
\begin{figure}[h]
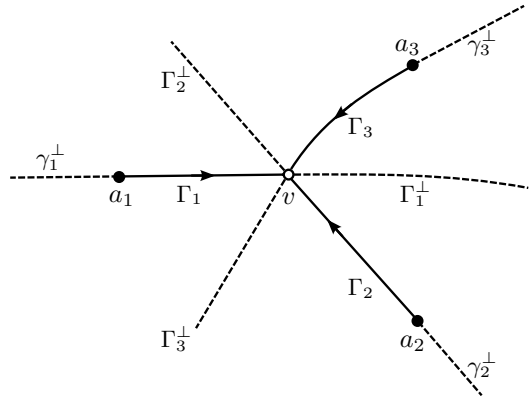

\centering \begin{overpic}[scale=1.1]%
{Gamma1}%
\put(52.3,37.5){$v$}
\put(19,38){$a_1$}
\put(75,10){$a_2$}
\put(74,67){$a_3$}
\put(32,38){\small $\Gamma_1$}
\put(75,38){\small $\Gamma_1^\perp$}
\put(65,20){\small $\Gamma_2$}
\put(29,60){\small $\Gamma_2^\perp$}
\put(65,51){\small $\Gamma_3$}
\put(29,10){\small $\Gamma_3^\perp$}
\put(5,45){\small $\gamma_1^\perp$}
\put(88,5){\small $\gamma_2^\perp$}
\put(88,67){\small $\gamma_3^\perp$}
\end{overpic}
\caption{$\Gamma$ and $\Gamma^\perp$.}
\label{fig:Gamma}
\end{figure}

Contour $\gamma_1^\perp \cup \Gamma  \cup \gamma_2^\perp$ splits $\C\setminus \Gamma$ into two simply connected domains. We denote by ${D_+}$ the domain containing $a_3$ on its boundary, and ${D_-}$ the complementary one.

On the three subarcs of $\Gamma$ we fix the orientation ``from $a_j$ to $v$'', while on the arcs of $\Gamma^\perp$ we choose the orientation ``from $v$ to infinity''. This induces the left (``$+$'') and right (``$-$'') sides and boundary values.  

The \emph{equilibrium measure} $  \lambda=\lambda_\Gamma$ on $\Gamma$ has the form
$$
d\lambda(z) = \frac{1}{\pi i}\, T_-(z) dz
$$
(compare with \eqref{equilibriumMeasureGeneral}). We denote also
$$
{  m_j} = \lambda(\Gamma_j)=\frac{1}{\pi i}\, \int_{a_j}^{v}T_-(t) dt,  \quad j=1,2,3,
$$
so that $m_1+m_2+m_3=1$. 

Define in $\C\setminus \Gamma$
\begin{equation}
\label{defPhiconformal}
{  \Phi(z)}=  \exp\left( \int_{v} ^z T(t)\, dt\right),
\end{equation}
normalized by the condition
$$
\lim_{\stackrel{z\to v}{ z\in \Gamma_1^\perp}} \Phi(z)=1;
$$
observe that $\Phi$ coincides up to a multiplicative constant with $  \exp\left( G(\cdot ,\infty)\right)$ introduced in  \eqref{defGreenF}. It is a conformal mapping of the exterior $\C\setminus \Gamma$ onto the exterior of the unit circle, such that
\begin{equation}
\label{def:Phi}
\Phi(z)={   c}\, z + \mathcal O(1), \quad z\to \infty,
\end{equation}
with $1/c$ coinciding, again up to a factor of absolute value $1$, with the logarithmic capacity of $\Gamma$. 
Direct calculation allows to establish the following lemma:
\begin{lemma} \label{lemma1}
For $z\in { \Gamma^\circ }:=\Gamma\setminus \{v, a_1, a_2, a_3 \}$ and with the orientation shown on Figure~\ref{fig:Gamma},
\begin{equation}
\label{bdryconditionsPhi}
\Phi_-(z)\Phi_+(z) =\kappa_j,\quad  z \in \Gamma_j^\circ:=\Gamma_j\setminus \{v, a_j \}, 
\end{equation}
with
\begin{equation}
\label{def:kappa2}
{  \kappa_1 }=e^{2\pi i (m_3-m_2)}, \quad {  \kappa_2 }=e^{-2\pi i  m_2 }, \quad  {  \kappa_3 }=e^{2\pi i m_3}, 
\end{equation}
so that $|\kappa_j|=1$ and $\kappa_2 \kappa_3=\kappa_1$.
\end{lemma}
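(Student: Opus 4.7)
My plan is: first show $\Phi_+\Phi_-$ is locally constant on each arc $\Gamma_j^\circ$, and then identify the constant by passing to the common endpoint $v$ and evaluating $\Phi$ in the three sectors of $\C\setminus\Gamma$ that meet there.

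\emph{Step 1: local constancy.} Since $(\log\Phi)'(z)=T(z)$ by \eqref{defPhiconformal}, and $T=(V/A)^{1/2}$ has its branch cut precisely along $\Gamma$, we have $T_+(z)=-T_-(z)$ on each $\Gamma_j^\circ$. Hence the tangential derivative
$$
(\log\Phi_++\log\Phi_-)'(z)=T_+(z)+T_-(z)=0, \qquad z\in\Gamma_j^\circ,
$$
so $\Phi_+\Phi_-\equiv\kappa_j$ is constant on each $\Gamma_j^\circ$. The equality $|\kappa_j|=1$ follows from $|\Phi|=e^{G(\cdot,\infty)}$ together with $G\equiv 0$ on $\Gamma$.

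\emph{Step 2: reduction to sector limits at $v$.} In a small neighborhood of $v$, the open set $\C\setminus\Gamma$ splits into three connected sectors $S_1,S_2,S_3$, one containing each orthogonal trajectory $\Gamma_j^\perp$; label them so that $\Gamma_1^\perp\subset S_1$. Because $T(z)\sim\sqrt{(z-v)/A(v)}$ is integrable at $v$, $\log\Phi$ extends continuously up to $v$ from within each $S_i$, yielding well-defined sector-limits $\Phi(v)|_{S_i}$; the normalization fixes $\Phi(v)|_{S_1}=1$. When $\Gamma_j$ separates $S_a$ from $S_b$, the boundary values $\Phi_\pm(v)$ on $\Gamma_j$ agree (from the two approaching sides) with $\Phi(v)|_{S_a}$ and $\Phi(v)|_{S_b}$, so
$$
\kappa_j=\Phi(v)|_{S_a}\,\Phi(v)|_{S_b}.
$$

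\emph{Step 3: computing the sector limits.} Two adjacent sectors $S_a,S_b$ separated by $\Gamma_k$ are linked by a contour in $\C\setminus\Gamma$ that hugs $\Gamma_k$ on the $S_a$-side, loops around the endpoint $a_k$, and returns on the $S_b$-side; the small loop at $a_k$ contributes negligibly because $T\sim C/\sqrt{z-a_k}$ has an integrable singularity there, and the remaining integral, using $T_+=-T_-$ on $\Gamma_k^\circ$ together with the formula for $m_k$, gives
$$
\log\Phi(v)|_{S_b}-\log\Phi(v)|_{S_a}=\pm 2\int_{a_k}^{v}T_-(t)\,dt=\pm 2\pi i\,m_k\pmod{2\pi i},
$$
the sign being determined by whether $S_a$ lies on the $+$ or $-$ side of $\Gamma_k$ (with $+$ on the left of the $a_k\to v$ orientation). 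The cyclic order of the six rays at $v$, three arcs $\Gamma_j$ alternating with the orthogonal trajectories $\Gamma_j^\perp$, as shown in Figure~\ref{fig:Gamma}, implies that each $\Gamma_j$ separates exactly the two sectors \emph{not} containing $\Gamma_j^\perp$. Walking around the cyclic arrangement produces $\Phi(v)|_{S_2}=e^{2\pi i m_3}$ and $\Phi(v)|_{S_3}=e^{-2\pi i m_2}$, from which the three stated formulas for $\kappa_j$ follow; the identity $\kappa_1=\kappa_2\kappa_3$ is then an automatic consequence of $m_1+m_2+m_3=1$. The main obstacle is exactly this sign-bookkeeping: correctly identifying which sector sits on the $+$ (resp.\ $-$) side of each $\Gamma_j$, under the fixed $a_j\to v$ orientation and the local cyclic arrangement at $v$, is what produces the asymmetric appearance of $m_2$ versus $m_3$ in the formula for $\kappa_1$.
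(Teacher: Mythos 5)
Your proof is correct, and it supplies precisely the ``direct calculation'' that the paper invokes without writing out: local constancy of $\Phi_+\Phi_-$ from $T_++T_-=0$ on the cut, reduction to the three sector limits of $\Phi$ at $v$, and evaluation of those limits by contours looping around the $a_k$, with the asymmetry in $m_2$ versus $m_3$ coming from which sector lies on the $+$ side of each arc under the $a_j\to v$ orientation. The only cosmetic slip is the closing remark that $\kappa_1=\kappa_2\kappa_3$ uses $m_1+m_2+m_3=1$; it is immediate from the three exponents without that identity.
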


\begin{figure}[h]
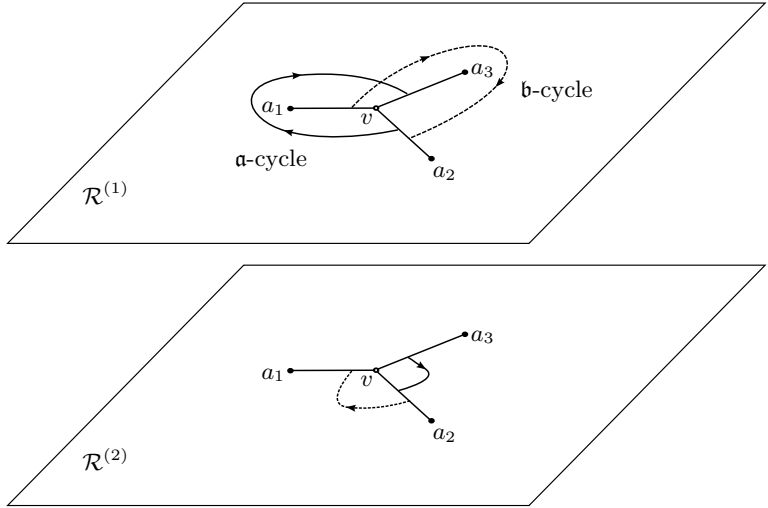

\centering \begin{overpic}[scale=0.7]%
{Cycles1}%
\put(46.5,50.5){\small $v$}
\put(46.5,16){\small $v$}
\put(33.5,52){\small $a_1$}
\put(33.5,17){\small $a_1$}
\put(61,57){\small $a_3$}
\put(61,22){\small $a_3$}
\put(56,43.5){\small $a_2$}
\put(56,9){\small $a_2$}
\put(10,40){\small $\mathcal R^{(1)}$}
\put(10,5){\small $\mathcal R^{(2)}$}
\put(30,45){\small $\mathfrak a$-cycle}
\put(67.5,54){\small $\mathfrak b$-cycle}
\end{overpic}
\caption{Cycles on $\mathcal R$.}
\label{fig:cycles}
\end{figure}

As before, we consider the Riemann surface $ {\mathcal R} $ defined by the equation $w^2=A(z) V(z)$. Now it is an elliptic Riemann surface that can be considered as a two-sheeted covering of $\overline{\C}$, $\mathcal R=\{ \bm z =(z, w)\in \C^2 \}$, with two sheets, $ \mathcal R^{(1)}$ and $ \mathcal R^{(2)}$, cut along $\Gamma$ and glued together in the standard way. The canonical projection $ {\pi}:\,  \mathcal R\to \overline{\C}$ is given by $\pi(\bm z)=z$ for $ \bm z =(z, w)\in \C^2 $. As in Section \ref{sec:wkb}, we denote $\bm z^{(j)} =\pi^{-1}(z) \cap \mathcal R^{(j)}$, $j=1, 2$, and we convene that sheet $\mathcal R^{(1)}$ over $\C\setminus \Gamma$ is specified by the condition  $w/z^2 \to 1$ as $\bm z  \to \bm \infty^{(1)}\in \mathcal R^{(1)}$. 
In this way, function $w=(AV)^{1/2}$ is single-valued on $\mathcal R$, with 
$$
w(\bm z)=\begin{cases}
T(z)/V(z), & \text{for } \bm z= \bm z^{(1)}\in \mathcal R^{(1)} ,\\
-T(z)/V(z), & \text{for } \bm z= \bm z^{(2)}\in \mathcal R^{(2)} .
\end{cases}
$$
Again, we identify the first sheet $\mathcal R^{(1)}$ with the domain $\overline{\C}\setminus \Gamma=\pi(\mathcal R^{(1)})$.
We also denote $\mathcal D_\pm^{(j)}=\pi^{-1}(D_\pm)\cap \mathcal R^{(j)}$, $j=1,2$.

We define the canonical homology basis of cycles as in Figure~\ref{fig:cycles}: the $\mathfrak a$-cycle encloses $v$ and $a_1$, while the $\mathfrak b$-cycle goes around $v$ and $a_3$. Both are oriented as indicated in the figure, so that at their unique intersection point on $\mathcal R$ their tangent vectors form a right pair. 

The normal form of the Riemann surface $\mathcal R$ is the polygon (rectangle) $\widetilde{\mathcal R}$ with sides  $\mathfrak a \mathfrak b \mathfrak a^{-1} \mathfrak b^{-1}$ (see Figure \ref{fig:rectangle})\footnote{One of the authors of \cite{Baratchart:2011fk} kindly pointed out to us that a similar figure is contained in the cited paper.}. 

\begin{figure}[h]
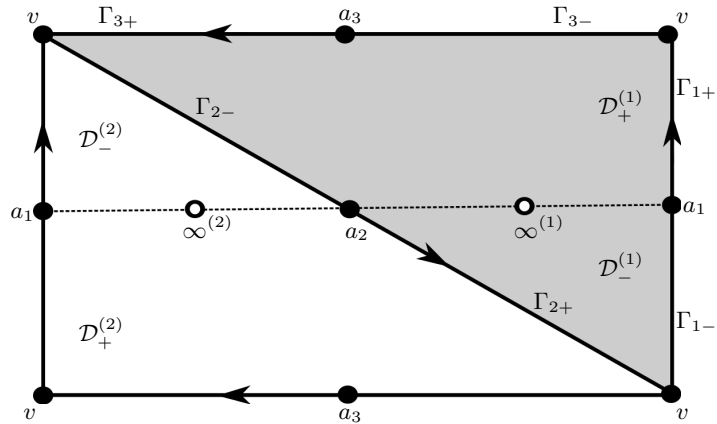

\centering \begin{overpic}[scale=0.9]%
{OpenR}%
\put(-1.5,-2){\small $v$}
\put(-1,59){\small $v$}
\put(99,-2){\small $v$}
\put(99,59){\small $v$}
\put(-3.5,29){\small $a_1$}
\put(100,30){\small $a_1$}
\put(47,-2){\small $a_3$}
\put(47,59.5){\small $a_3$}
\put(48,26){\small $a_2$}
\put(23,26){\small $\infty^{(2)}$}
\put(74,26){\small $\infty^{(1)}$}
\put(7,40){\small $\mathcal D_-^{(2)}$}
\put(7,10){\small $\mathcal D_+^{(2)}$}
\put(87,20){\small $\mathcal D_-^{(1)}$}
\put(87,45){\small $\mathcal D_+^{(1)}$}
\put(99,48){\small $\Gamma_{1+}$}
\put(99,12){\small $\Gamma_{1-}$}
\put(10,59){\small $\Gamma_{3+}$}
\put(80,59){\small $\Gamma_{3-}$}
\put(77,15){\small $\Gamma_{2+}$}
\put(25,45){\small $\Gamma_{2-}$}
\end{overpic}
\caption{Polygon $\widetilde{\mathcal R}$. Shaded domain corresponds to the first sheet $\mathcal R^{(1)}$, and the dotted line corresponds to $\pi^{-1}(\gamma_1^\perp \cup \gamma_2^\perp)$.}
\label{fig:rectangle}
\end{figure}

We introduce also some notation, slightly different from that used in Section \ref{sec:wkb}, related to differentials on $\mathcal R$ and their periods. For integer $k$, denote
\begin{equation}
\label{def:holomorphicdiff}
d\nu_k (\bm z)= \frac{z ^k d  z}{w(\bm z)}.
\end{equation}
Then $d\nu_0$ is, up to a constant multiple, the only holomorphic differential (abelian differential of the first kind) on $\mathcal R$. 

For two points $\bm r_1, \bm r_2\in \mathcal R$ we denote by $ {\Omega_{\bm r_1,\bm r_2}}$ the normalized differential of the third kind, such that it has  only simple poles at $\bm r_1$, with residue $+1$, and at $\bm r_2$, with residue $-1$, and with a vanishing $\mathfrak b$-period. In particular, if $ {\bm z^*}=( z^*,  w^*)\in \mathcal R$,
\begin{equation}
\label{def:Omega}
\begin{split}
d \Omega_{\bm z^*,\bm \infty^{(1)}}& =\frac{1}{2 w(\bm z)} \left(\frac{w(\bm z)+w^*}{z-z^*} + z + \delta \right) dz, \\
\delta & = - \left(\oint_{\mathfrak b} \frac{1}{w(\bm z)} \left(\frac{w(\bm z)+w^*}{z-z^*} + z  \right) dz \right) \bigg / \left(\oint_{\mathfrak b} \frac{dz}{w(\bm z)}   \right).
\end{split}
\end{equation}

With the orientation of $\Gamma_j$ specified in Figure \ref{fig:Gamma} we define 
\begin{equation}
\label{defMs}
{M_j^k}:=-\frac{1}{2\pi i}\,\int_{\Gamma_j} \frac{t^k dt}{w_+(t)  }, \quad k\in \N\cup \{0\},  \quad j=1,2, 3,
\end{equation}
so that
$$
\oint_{\mathfrak a} d\nu_k =-4\pi i\, M_1^k , \quad \oint_{\mathfrak b} d\nu_k=-4\pi i\, M_3^k , \quad k\in \N\cup \{0\}.
$$
In particular,  $M_j^0\neq 0$, $j=1,2, 3$, and $\Im (\tau)<0$, with
\begin{equation}
\label{defTau}
  {\tau}:= \frac{M_1^0}{M_3^0}.
\end{equation}
Direct computation using the Cauchy integral formula shows that
\begin{equation*}
\label{identitiesM}
M_1^0 + M_2^0 + M_3^0  =0, \quad M_1^1 + M_2^1 + M_3^1 =\frac{1}{2}, \quad
M_1^2 + M_2^2 + M_3^2  =\frac{\mathcal S}{2}, 
\end{equation*}
where we use the notation
\begin{equation}
\label{def:average}
 {\mathcal S} :=\frac{v+a_1+a_2+a_3}{2}.
\end{equation}

We reserve the notation $d\nu_0^*$ for the \emph{normalized} differential of the first kind, whose $\mathfrak b$-period is equal $2\pi i$:
\begin{equation} \label{normalized}
 {d\nu_0^*(z)}=-\frac{1}{2 M_3^0 }\frac{dz}{w(\bm z)}=-\frac{1}{2 M_3^0 }\, d \nu_0. 
\end{equation}	
Observe that the $\mathfrak a$-period of $\nu_0^*$ is  $2\pi i \tau$.

\section{Riemann and Hilbert, or the non-linear steepest descent analysis} \label{sec:RH}

Now we are ready to return to the Pad\'e approximants of the function 
$$
 {f(z)}=(z-a_1)^{\alpha_1} (z-a_2)^{\alpha_2} (z-a_3)^{\alpha_3},
$$
with $\alpha_1, \alpha_2, \alpha_3 \in \R\setminus \Z$, such that $\alpha_1+\alpha_2+\alpha_3=0$. For the sake of simplicity of the analysis we assume additionally that  $\alpha_j>-1$, $j=1,2,3$. As before, we specify the branch in $\C \setminus \Gamma$ by $f(\infty)=1$ and  agree in denoting by $f^{1/2}$ the branch of the square root in $\C\setminus \Gamma$ given by $f^{1/2}(\infty)=1$.
 
We need to introduce an additional piece of notation: for $j=1,2,3$, let
\begin{equation}
\label{def:constants}
{ \tau_j}= e^{-i \pi \alpha_j}, \quad {  t_j}=  2 i \sin (\pi \alpha_j),\quad {  s_{n,j}}= t_j \kappa_j^n,
\end{equation}
with $\kappa_j$ defined in \eqref{def:kappa2}. Observe that $\tau_1 \tau_2 \tau_3 =1$, and 
\begin{equation}
\label{identitiies_tau}
\tau_{j+1} t_{j-1} + t_j +t_{j+1} \tau_{j-1}^{-1}=   \tau_{j-1} t_{j+1} + t_j +t_{j-1} \tau_{j+1}^{-1}=0, \quad j=1, 2, 3,
\end{equation}
where the subindices are taken mod 3.

Let us collapse the contour of integration in \eqref{orthogonality1} onto $\Gamma$; as a consequence, function $f$ induces on $\Gamma$ the weight $\rho$,
$$
{\rho(z)}=f_-(z)-f_+(z)=\frac{t_j}{\tau_j}\, f_+(z)=(\tau_j^{-2}-1) f_+(z), \quad z\in \Gamma_j\setminus \{v, a_j \}, \quad j=1, 2, 3, 
$$
so that the orthogonality condition \eqref{orthogonality1} can be rewritten as 
\begin{equation}
\label{orthogonality2}
\int_\Gamma  t^k Q_n(t) \rho(t)\, dt =0, \quad k=0, 1, \dots, n-1.
\end{equation}
By our assumption that $\alpha_j>-1$, the weight is integrable on $\Gamma$, and the regularity of $f$ at infinity implies that
$$
\lim_{\stackrel{z\to v}{z\in \Gamma_1}} \rho(z) + \lim_{\stackrel{z\to v}{z\in \Gamma_2}} \rho(z) + \lim_{\stackrel{z\to v}{z\in \Gamma_3}} \rho(z) =0.
$$
Standard arguments show that there is an integral formula for the residue $R_n$:
$$
R_n(z)= \frac{1}{2\pi i} \int_{\Gamma}   \frac{Q_n(t) \rho(t)}{t-z}dt , \quad z \in \overline{\C}\setminus \Gamma.
$$

This allows us to formulate the Riemann-Hilbert problem for $Q_n$ and $R_n$.  
Let $\sigma_3$ denote the third Pauli matrix,
$$
 {\sigma_3}=\begin{pmatrix}
1 & 0 \\
0 & -1
\end{pmatrix},
$$
and for any scalar $a$ we use the notation
$$
a^{\sigma_3}=\begin{pmatrix}
a & 0 \\
0 & a^{-1}
\end{pmatrix}.
$$ 
We seek the matrix-valued and analytic function $\bm Y=\bm Y(\cdot; n): \C\setminus \Gamma \to \C^{2\times 2}$, such that:
\begin{enumerate}
\item[(RH-Y1)] It has continuous boundary values $\bm Y_\pm$ on both sides of $\Gamma^\circ$, and with the specified orientation of $\Gamma$,
$$
\bm Y_+(z)=\bm Y_-(z)\, \begin{pmatrix}
1 & \rho(z) \\
0 & 1
\end{pmatrix}, \quad z\in \Gamma^\circ. 
$$
\item[(RH-Y2)] $\bm Y(z)= (\bm I +\mathcal O(1/z))\, z^{n\sigma_3}$, as $z\to \infty$.
\item[(RH-Y3)] As $z\to a_j$, $z \in \mathbb C \setminus \Gamma$, $j=1, 2, 3$,
 \begin{equation*}\label{RHPY3}
            \bm Y(z)=\left\{
            \begin{array}{cl}
               \mathcal O\begin{pmatrix}
                    1 & |z-a_j|^{\alpha_j} \\
                    1 & |z-a_j|^{\alpha_j}
                \end{pmatrix},
                &\mbox{if $\alpha_j<0$,} \\[2ex]
                \mathcal  O\begin{pmatrix}
                    1 & 1 \\
                    1 & 1
                \end{pmatrix},
                &\mbox{if $\alpha_j>0$.}
            \end{array}\right.
        \end{equation*}
        \item[(RH-Y4)] As $z\to v$, $z \in \mathbb C \setminus \Gamma$, 
 \begin{equation*}\label{RHPY4}
            \bm Y(z)=  \mathcal O\begin{pmatrix}
                    1 & \log |z-v| \\
                    1 & \log |z-v| 
                \end{pmatrix}.         
                \end{equation*}
\end{enumerate}
From the fundamental work of Fokas, Its and Kitaev \cite{Fokas92} it follows that
\begin{theorem}
    The matrix valued function $\bm Y(z)$ given by
    \begin{equation*} \label{RHPYsolution}
      \bm  Y(z) =
        \begin{pmatrix}
           Q_n(z) & R_n(z) \\[2ex]
            -2\pi i \gamma_{n-1}^2 Q_{n-1}(z) & -2\pi i \gamma_{n-1}^2  R_{n-1}(z)
        \end{pmatrix}
    \end{equation*}
    is the unique solution of (RH-Y1)--(RH-Y4), where  $Q_n$ is the monic polynomial of degree $n$ satisfying \eqref{orthogonality2} and $ {\gamma_n}$ is the leading coefficient of
    the corresponding orthonormal polynomial.
\end{theorem}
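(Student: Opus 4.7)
The plan is to verify that the explicit matrix $\bm Y$ proposed in the statement satisfies all four conditions (RH-Y1)--(RH-Y4), and then to establish uniqueness by a standard Liouville-type argument applied to the product $\bm Y \widetilde{\bm Y}^{-1}$ of two candidate solutions. The verification is essentially a reading of the defining properties of $Q_n$ and $R_n$ off the Plemelj--Sokhotski formulas, the orthogonality \eqref{orthogonality2}, and the local behavior of $f$ near the endpoints.

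For existence, I would check the conditions row by row. Condition (RH-Y1) follows from the fact that $Q_n, Q_{n-1}$ are entire and the Cauchy-type integrals
\begin{equation*}
R_m(z)=\frac{1}{2\pi i}\int_\Gamma \frac{Q_m(t)\rho(t)}{t-z}\,dt
\end{equation*}
satisfy $R_{m,+}(z)-R_{m,-}(z)=Q_m(z)\rho(z)$ on $\Gamma^\circ$ by Plemelj--Sokhotski, for $m=n,n-1$; a short matrix computation then yields the required triangular jump. Condition (RH-Y2) splits into four scalar asymptotics: $Q_n(z)/z^n\to 1$ holds since $Q_n$ is monic of degree $n$; $Q_{n-1}(z)/z^n=\mathcal O(1/z)$ holds by degree; $R_n(z)\,z^n=\mathcal O(1/z)$ follows from the geometric expansion
\begin{equation*}
R_m(z)=-\frac{1}{2\pi i}\sum_{k=0}^\infty z^{-k-1}\int_\Gamma t^k Q_m(t)\rho(t)\,dt,\qquad z\to\infty,
\end{equation*}
combined with the orthogonality \eqref{orthogonality2}. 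Finally, the $(2,2)$-entry $-2\pi i \gamma_{n-1}^2 R_{n-1}(z)\,z^n$ tends to $1$ precisely because $\int_\Gamma t^{n-1}Q_{n-1}(t)\rho(t)\,dt=\int_\Gamma Q_{n-1}^2(t)\rho(t)\,dt=1/\gamma_{n-1}^2$, which is the defining identity for the leading coefficient of the orthonormal polynomial. Conditions (RH-Y3) and (RH-Y4) reduce to the well-known mapping properties of Cauchy transforms of integrable densities with algebraic or jump singularities: near $a_j$ the density behaves like $|z-a_j|^{\alpha_j}$, so the Cauchy integral is $\mathcal O(|z-a_j|^{\alpha_j})$ when $\alpha_j<0$ and bounded when $\alpha_j>0$, while near $v$ the density has only a jump, producing at worst a logarithmic factor.

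For uniqueness, the standard argument is to first show that $\det\bm Y\equiv 1$: the jump matrix in (RH-Y1) is unimodular, so $\det\bm Y$ has no jump across $\Gamma^\circ$; the endpoint conditions (RH-Y3)--(RH-Y4) force the isolated singularities at $a_j,v$ to be removable (the worst combined blow-up from a row and a column is still subcritical for $\alpha_j>-1$); and (RH-Y2) gives $\det\bm Y(\infty)=1$, so Liouville's theorem yields $\det\bm Y\equiv 1$. If $\widetilde{\bm Y}$ is another solution, then $\bm M:=\bm Y\widetilde{\bm Y}^{-1}$ has trivial jump across $\Gamma^\circ$ (the triangular jumps cancel), removable singularities at $a_j$ and $v$ (again by checking the pairing of growth rates from (RH-Y3)--(RH-Y4)), and $\bm M(z)\to\bm I$ as $z\to\infty$; Liouville's theorem forces $\bm M\equiv \bm I$. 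Once uniqueness is established, the identification of the unknown matrix with the prescribed polynomial and Cauchy-transform entries is automatic.

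The only delicate step is the removability of the singularities at the branch points $a_j$ when $-1<\alpha_j<0$, where both $R_n$ and the weight itself are unbounded. The key observation is that the prescribed growth rates in (RH-Y3) are \emph{integrable} (since $\alpha_j>-1$), so any putative pole contribution would have to be of order strictly less than $1$, hence absent; combined with the trivial jump of $\det\bm Y$ and of the cofactor matrix $\bm M$, this gives genuine analyticity. Everything else is a matching of coefficients and is routine; hence I expect the real content to be the careful bookkeeping of endpoint exponents in conditions (RH-Y3)--(RH-Y4), to which the Fokas--Its--Kitaev framework is tailored.
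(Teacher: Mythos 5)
Your proof is correct and is the standard Fokas--Its--Kitaev/Deift argument (direct verification of (RH-Y1)--(RH-Y4) via Plemelj--Sokhotski, the orthogonality \eqref{orthogonality2} and Muskhelishvili-type endpoint estimates, followed by the $\det\bm Y\equiv 1$ and Liouville uniqueness step); the paper gives no proof of this theorem, simply citing \cite{Fokas92}, so your write-up supplies exactly the omitted standard details. The only point worth making explicit is that $\gamma_{n-1}$ is well defined, i.e.\ $\int_\Gamma Q_{n-1}^2(t)\rho(t)\,dt\neq 0$, which is implicitly assumed in the statement as part of the normality of the index $n$.
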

This result is complemented with the non-linear steepest descent method of Deift and Zhou \cite{MR99g:34038, MR98b:35155, MR94d:35143, MR2000g:47048, MR98k:47097}: we need to perform a number of explicit and invertible transformations of (RH-Y1)--(RH-Y4) in order to reach a boundary value problem with jumps asymptotically close to the identity and a regular behavior at infinity.   Two of the main ingredients of this analysis are the outer (global) parametrix and the local model at the Chebotarev center, that we explain next.

\subsection{Global parametrix} \label{sec:outer}

For $n\in \N$ we need to find  an analytic matrix-valued function $\bm N_n=\bm N:\,  \C\setminus \Gamma \to \C^{2\times 2}$,  such that
\begin{enumerate}
\item[(RH-N1)] It has continuous boundary values $\bm N_\pm$ on both sides of $\Gamma^\circ$, and with the  orientation ``from $a_j$ to $v$'' of $\Gamma$,
\begin{equation}
\label{boundaryforN}
\bm N_+(z)=\bm N_-(z)\, \begin{pmatrix}
0 & s_{n,j} \\
-1/s_{n,j}& 0
\end{pmatrix}, \quad z\in \Gamma_j^\circ.
\end{equation}
\item[(RH-N2)] $\bm N(z)=  \bm I +\mathcal O(1/z))$, as $z\to \infty$.
\item[(RH-N3)] As $z\to a_j$, $z \in \mathbb C \setminus \Gamma$, $j=1, 2, 3$,
$$
\bm N(z)= \mathcal O(|z-a_j|^{-1/4}).
$$
 As $z\to v$, $z \in \mathbb C \setminus \Gamma$, 
 $$
\bm N(z)= \mathcal O(|z-v|^{-1/4}).
$$
\end{enumerate}
Constants $s_{n,j}$ were defined in \eqref{def:constants}; hence, the dependence on $n$ resides only in the boundary condition \eqref{boundaryforN}. 

On the compact subsets of $\C\setminus \Gamma$ this problem is asymptotically close to the boundary value problem for the following matrix,
\begin{equation}
\label{def:matrixT}
\bm T(z):=c^{n \sigma_3} \bm Y(z) \Phi^{-n \sigma_3}(z) f^{\sigma_3/2}(z),
\end{equation}
with $c$ defined in \eqref{def:Phi}. Hence, we can expect that away from the Chebotarev compact $\Gamma$ the solution $\bm N$ of 
(RH-N1)--(RH-N3) models the behavior of $\bm T$ for $n$ large enough. 

We build  $\bm N$ in the following form, 
\begin{equation}
\label{defNouter}
\bm N(z):=F(\infty)^{\sigma_3} \widetilde{\bm N}(z) F(z)^{-\sigma_3},
\end{equation}
using two ``ingredients'' described in detail below: a scalar function $F$, which plays the role of a Szeg\H{o} function with piece-wise constant boundary values, and a matrix-valued function $\widetilde{\bm N}=(\widetilde{\bm N}_{ij})$, which will be defined in terms of abelian integrals on $\mathcal R$. Both $F$ and $ \widetilde{\bm N}$ depend on $n$, but in this section we omit this dependence from the notation, keeping it in mind. 
 

With the notation \eqref{def:holomorphicdiff} consider the equation 
$$
\int_{\bm \infty^{(1)}}^{\bm z_n} d\nu_0  =- \left(1+ \frac{1}{2\pi i}\, \log\left( \frac{s_{n,2}}{s_{n,1}}\right) \right)\, \oint_{\mathfrak a} d\nu_0 - \frac{1}{2\pi i}\, \, \log\left( \frac{s_{n,2}}{s_{n,3}}\right) \oint_{\mathfrak b} d\nu_0 ,$$
or equivalently,
\begin{equation}
\label{mainEqparameters}
\int_{\bm \infty^{(1)}}^{\bm z_n} d\nu_0^*   = -2\pi i \tau \left(1+ \frac{1}{2\pi i}\, \log\left( \frac{s_{n,2}}{s_{n,1}}\right) \right)  -   \log\left( \frac{s_{n,2}}{s_{n,3}}\right)  ,
\end{equation}
where $\nu_0^*$ is the normalized differential of the first kind \eqref{normalized}, and the path of integration lies entirely in the rectangle $\widetilde{\mathcal R}$. Among all possible choices of the branch of the logarithm, there is at most one  value of $\log(s_{n,2}/s_{n,1})$ and  at most one  value of $\log(s_{n,2}/s_{n,3})$ such that this equation has a solution in $\widetilde{\mathcal R}$; this solution $ {\bm z_n=(z_n,w_n)}$ is obviously unique. 
 
\begin{remark}
If $\bm z_n$ falls on one of the cycles $\Gamma_j$ we consider it slightly deformed so that the same argument applies. A truly  special situation occurs when eventually $\bm z_n=\infty^{(1)}$ or  $\bm z_n=\infty^{(2)}$. The first case happens  when 
\begin{equation}
\label{pathology}
  \frac{M_1^0}{2\pi i}\, \log\left( \frac{s_{n,2}}{s_{n,1}}\right)   + \frac{M_3^0}{2\pi i}\, \, \log\left( \frac{s_{n,2}}{s_{n,3}}\right) \equiv 0 \mod \Z.
\end{equation}
The consequences of this degeneration are discussed below, see Remark \ref{remark:exceptional}.
\end{remark}

With this choice of the branch of the value of $\log(s_{n,2}/s_{n,1})$ we define two parameters, $\beta _1$ and $\beta _2$, as follows:
\begin{align}
\label{def:theta1}
{\beta _1}:= & \log s_{n,1} =\log(t_1) + 2\pi i n(m_3-m_2). 
\\
\label{def:theta2}
{\beta _2}:= & \pi i + \log\left( \frac{s_{n,2}}{s_{n,1}}\right).
\end{align}
Obviously, \eqref{def:theta1} defines $\beta _1$ up to an additive constant which is an integer multiple of $2\pi i$.

With these two complex constants  fixed, we build a complex-valued function $F$ on $\C\setminus \Gamma$,  holomorphic, uniformly bounded and non-vanishing in $\overline\C\setminus \Gamma$, and such that
\begin{equation}
\label{Fbdryvalues}
F_+(z)F_-(z)=  \begin{cases} e^{\beta _1}=s_{n,1}, & z \in \Gamma_1\setminus \{v, a_1\}, \\
e^{ \beta _1+\beta _2}=-s_{n,2}, & z \in \Gamma_2\setminus \{v, a_2\}, \\
e^{\beta _1+\beta _3}=s_{n,1}e^{ \beta _3}, & z \in \Gamma_3\setminus \{v, a_3\}. \\
\end{cases}
\end{equation}
Constant $\beta _3$ is not arbitrary:
\begin{equation}
\label{theta3}
\beta _3= \left(1+\tau \right) \beta _2,
\end{equation}
with $\tau$ from \eqref{defTau}. 
We take $F$ of the form $F(z)=\exp({\Lambda(z)})$, and give two equivalent expressions for $\Lambda$. 

First, $\Lambda$ can be built in terms of the holomorphic differential $\nu_0$ on $\R$:
\begin{equation}
\label{LambdaDifferential}
\Lambda(z) =\frac{\beta _1}{2}+ \Xi\, \int_{a_1}^{\bm z^{(1)}} d\nu_0=\frac{\beta _1}{2}+ \Xi\, \int_{a_1}^{\bm z^{(1)}} \frac{dt}{w(t)}, \quad \bm z^{(1)}\in \mathcal R^{(1)}, \quad z\in \overline \C \setminus \Gamma,
\end{equation}
where
\begin{equation}
\label{def:Xi}
 {\Xi } =\beta _2 \left[ \left(M_1^2 - \tau M_3^2 \right) - \mathcal S \left(M_1^1 - \tau M_3^1 \right) \right] ,
\end{equation}
with $M_j^k$ introduced in \eqref{defMs} and $\mathcal S$ in \eqref{def:average}. 
The path of integration in \eqref{LambdaDifferential} lies entirely in $\mathcal R^{(1)}$, except for its initial point. Observe that $\Lambda$ in \eqref{LambdaDifferential} is a holomorphic function in $\overline \C \setminus \Gamma$.

Alternatively, define the functions
\begin{equation} \label{defEll}
\mathfrak l_j(z):=\frac{w(z)}{2\pi i}\,\int_{\Gamma_j} \frac{dt}{w_+(t) (t-z)}, \quad z\in \C\setminus \Gamma, \quad j=1,2, 3,
\end{equation}
where we integrate in the direction ``from $a_j$ to $v$'', and let
\begin{equation}
\label{defLambdaCauchy}
\Lambda(z)= \frac{\beta _1}{2}- \beta _2\, \big( \mathfrak l_1(z)- \tau  \mathfrak l_3(z) -1/ 2 \big), \quad z\in \C\setminus \Gamma.
\end{equation}

\begin{lemma}\label{lemma:Lambda}
With $\Lambda$ given either by  \eqref{LambdaDifferential}--\eqref{def:Xi} or by \eqref{defLambdaCauchy}, function  $F(z)=F_n(z)=\exp(\Lambda(z))$ is holomorphic, uniformly bounded and non-vanishing in $\overline\C\setminus \Gamma$, with
\begin{equation} \label{FatInfinity}
F(\infty)=\exp\left(\frac{\beta _1}{2}- \beta _2\, \big( M_1^1- \tau  M_3^1  -1/ 2 \big)\right).
\end{equation}	
Moreover, $F$ has continuous boundary values at $\Gamma^\circ$ that satisfy \eqref{Fbdryvalues}--\eqref{theta3}.

In consequence, formulas \eqref{LambdaDifferential}--\eqref{def:Xi} and \eqref{defLambdaCauchy} define the same function in $\overline\C\setminus \Gamma$. 
\end{lemma}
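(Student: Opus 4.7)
I would verify the four assertions (holomorphy with boundedness, non-vanishing, value at $\infty$, boundary relations) first for the Cauchy-type formula \eqref{defLambdaCauchy}, and then argue that the abelian-integral formula \eqref{LambdaDifferential} defines the same function by the uniqueness of the solution of the underlying scalar Riemann--Hilbert problem. Non-vanishing is free (we write $F=e^\Lambda$), so the real work is boundedness, the boundary identities, and the value at $\infty$.

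\textbf{Step 1: the Cauchy representation.} Each $\mathfrak l_j$ in \eqref{defEll} is holomorphic on $\C\setminus\Gamma$, being a product of $w$ (holomorphic on the first sheet) and a Cauchy transform holomorphic off $\Gamma_j$. Applying Plemelj--Sokhotski to the Cauchy integral and using $w_++w_-=0$ on $\Gamma^\circ$ one obtains
$$
\mathfrak l_{j,+}+\mathfrak l_{j,-}=\mathbf 1_{\Gamma_j}\quad\text{on }\Gamma^\circ.
$$
Substituting in \eqref{defLambdaCauchy} yields $\Lambda_++\Lambda_-=\beta_1$ on $\Gamma_1$, $\beta_1+\beta_2$ on $\Gamma_2$, and $\beta_1+(1+\tau)\beta_2=\beta_1+\beta_3$ on $\Gamma_3$; exponentiating and using \eqref{def:theta1}--\eqref{theta3} gives exactly \eqref{Fbdryvalues}. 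Boundedness at the branch points $a_j$ and $v$ follows from a local analysis: near an endpoint $a$ of $\Gamma_j$, $w(z)\sim C(z-a)^{1/2}$ while $\int_{\Gamma_j}\frac{dt}{w_+(t)(t-z)}\sim C'(z-a)^{-1/2}$, so $\mathfrak l_j$ stays bounded (similarly at $v$).

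\textbf{Step 2: value at infinity.} Expanding $1/(t-z)$ geometrically together with $\int_{\Gamma_j}t^k\,dt/w_+(t)=-2\pi i M_j^k$ gives
$$
\mathfrak l_j(z)=w(z)\sum_{k\ge0}\frac{M_j^k}{z^{k+1}}=M_j^0\,z+(M_j^1-\mathcal S M_j^0)+\frac{\Xi_j}{z}+O(1/z^2),
$$
where I use $w(z)=z^2-\mathcal S z+O(1)$ at $\infty$. The linear term in $\mathfrak l_1-\tau\mathfrak l_3$ vanishes because $\tau=M_1^0/M_3^0$; the constant term is $M_1^1-\tau M_3^1$, producing exactly \eqref{FatInfinity}. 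A further inspection of the next order shows that $\Lambda_{\text{Cauchy}}(z)-\Lambda_{\text{Cauchy}}(\infty)=-\Xi/z+O(1/z^2)$, with $\Xi$ precisely the quantity defined in \eqref{def:Xi} (the contributions involving $M_1^0-\tau M_3^0$ drop out).

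\textbf{Step 3: the abelian-integral representation.} Since $\overline\C\setminus\Gamma$ is simply connected (it is the complement of a tree), $\Lambda_{\text{diff}}(z)=\beta_1/2+\Xi\int_{a_1}^z\!dt/w(t)$ is single-valued and holomorphic on $\overline\C\setminus\Gamma$; the endpoint singularities $1/(t-a_j)^{1/2}$ of $1/w$ are integrable, so $\Lambda_{\text{diff}}$ is bounded up to $\Gamma$. From $\Lambda'_{\text{diff}}=\Xi/w$ and $w\sim z^2$ at infinity we get $\Lambda_{\text{diff}}(z)-\Lambda_{\text{diff}}(\infty)=-\Xi/z+O(1/z^2)$, matching the expansion in Step 2.

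\textbf{Step 4: equivalence via uniqueness.} Form $\Phi:=\Lambda_{\text{diff}}-\Lambda_{\text{Cauchy}}$ where $\Lambda_{\text{diff}}$ is normalized so that $\Lambda_{\text{diff}}(\infty)=\Lambda_{\text{Cauchy}}(\infty)$ (matching the additive constant $\beta_1/2$). The above steps show that $\Phi$ is holomorphic and bounded on $\overline\C\setminus\Gamma$, $\Phi(\infty)=0$, and on each $\Gamma_j^\circ$ one has $\Phi_++\Phi_-=0$ (both $\Lambda$'s produce the same jump constants). Hence $\Phi$ extends across $\Gamma^\circ$ by the hyperelliptic involution to a function odd on $\mathcal R$, holomorphic on $\mathcal R\setminus\{\bm\infty^{(1)},\bm\infty^{(2)}\}$, with $\Phi(\bm\infty^{(1)})=0$ and, by oddness, $\Phi(\bm\infty^{(2)})=0$. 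A holomorphic function on a compact Riemann surface vanishing at a point is identically zero, so $\Phi\equiv 0$, proving the two formulas coincide; as a by-product this also establishes that the differential formula satisfies the boundary conditions \eqref{Fbdryvalues}.

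\textbf{Main obstacle.} The delicate point is Step 4, specifically showing that the particular algebraic combination chosen for $\Xi$ in \eqref{def:Xi} is forced—equivalently, that the linear constraint $\beta_3=(1+\tau)\beta_2$ in \eqref{theta3} is automatically consistent with the period structure of the elliptic surface. One could attempt a direct verification by computing $K_j:=2\int_{a_1}^{a_j}dt/w$ in the basis of $\mathfrak a,\mathfrak b$-periods and checking that $\Xi K_2=\beta_2$ and $\Xi K_3=\beta_3\pmod{2\pi i}$ are compatible. I prefer to sidestep this through the uniqueness argument above: match the two $\Lambda$'s at infinity and use the odd-extension/Liouville argument on $\mathcal R$, which reduces a pair of would-be independent identities on periods to the already-verified match of the $1/z$-coefficient.
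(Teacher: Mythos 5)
Your Steps 1--3 are sound: the Plemelj relation $\mathfrak l_{j,+}+\mathfrak l_{j,-}=\mathbf 1_{\Gamma_j}$ (using $w_+=-w_-$ on the cut), the endpoint boundedness of $\mathfrak l_j$, and the expansion at infinity that produces \eqref{FatInfinity} and identifies $-\Xi$ as the $1/z$-coefficient are all correct. The gap is in Step 4, and it is twofold. First, the assertion that $\Phi_++\Phi_-=0$ on $\Gamma_j^\circ$ is circular as justified: for the abelian-integral representation one only knows a priori that $\Lambda_{\mathrm{diff},+}+\Lambda_{\mathrm{diff},-}=\beta_1+2\Xi\int_{a_1}^{a_j}dt/w$ is \emph{locally constant} on $\Gamma^\circ$ (because $d\nu_{0,+}+d\nu_{0,-}=0$ there); that these constants equal $0$, $\beta_2$, $\beta_3$ is exactly the pair of period identities you set out to avoid, so the odd extension to $\mathcal R$ is not yet available. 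The argument can be repaired: local constancy of $\Phi_++\Phi_-$ implies that $\Phi'\,dz$ extends to an odd holomorphic differential on the elliptic surface $\mathcal R$ (your endpoint estimates make the branch points removable), hence $\Phi'=\lambda/w$ and $\Phi=c+\lambda\int_{a_1}^{z}dt/w$; the already-verified matching of the $1/z$-coefficients then forces $\lambda=0$.

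Second --- and this is what your ``normalization'' hides --- you are then left with $\Phi\equiv c$, while the lemma asserts $c=0$: both formulas carry the same additive constant $\beta_1/2$, and \eqref{FatInfinity} must hold for the differential representation too. This requires the identity $\Xi\int_{a_1}^{\infty}dt/w=-\beta_2\bigl(M_1^1-\tau M_3^1-1/2\bigr)$, which you never verify and which does not follow from Liouville-type reasoning (constants are invisible to it). It does hold: since $\Phi'\equiv 0$ one gets $(\mathfrak l_1-\tau\mathfrak l_3)(z)=(\mathfrak l_1-\tau\mathfrak l_3)(a_1)-\bigl[(M_1^2-\tau M_3^2)-\mathcal S(M_1^1-\tau M_3^1)\bigr]\int_{a_1}^{z}dt/w$, and comparing with the value $M_1^1-\tau M_3^1$ at infinity reduces everything to $(\mathfrak l_1-\tau\mathfrak l_3)(a_1)=1/2$. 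The latter follows from the Plemelj representation of $1/w$ itself, which yields $\mathfrak l_1+\mathfrak l_2+\mathfrak l_3\equiv 1/2$ on $\C\setminus\Gamma$, combined with $\mathfrak l_2(a_1)=\mathfrak l_3(a_1)=0$ (since $w(a_1)=0$ and $a_1\notin\Gamma_2\cup\Gamma_3$). With this one evaluation added, your Step 4 closes and the boundary conditions \eqref{Fbdryvalues}--\eqref{theta3} for the differential formula do follow as a by-product, as you claim.
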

\begin{remark}
Recall that $\beta _2$ was defined uniquely as a function of $n$, but $\beta _1$ is determined mod $(2\pi i)$. From \eqref{LambdaDifferential}--\eqref{def:Xi} or \eqref{defLambdaCauchy} it follows that for each $n$, function $F$ is determined uniquely up to a change of sign.
\end{remark}

Now we define in $\C\setminus \Gamma$ the analytic matrix valued function $\widetilde{\bm N}$ entry-wise in terms of meromorphic differentials on $\mathcal R$ as follows (see also \cite{Kuijlaars:2011fk}). 
The meromorphic differential 
$$
 { d\eta^* (z)}= -\frac{1}{4} \frac{(AV)'(z)}{(AV)(z)} \, dz -\frac{1}{2} d\nu_0^* = \frac{1}{4}\left(-   \frac{1}{z-v} -  \sum_{j=0}^3 \frac{1}{z-a_j} +\frac{1}{ M_3^0\,  w(\bm z)}\right) dz
$$ 
has  only simple poles on $\mathcal R$: at the zeros of $AV$  with residues $-1/2$, and at $\infty^{(1)}$,  $\infty^{(2)}$, both with residues $+1$; additionally, its $\mathfrak b$-period  is zero.

With  $\bm z_n=(z_n,w_n)$ solving \eqref{mainEqparameters} we  consider the meromorphic differential
\begin{equation} \label{mainDiff1}
 {\eta_{\bm z_n}} = \eta^* +  \Omega_{\bm z_n,\infty^{(1)}},
\end{equation}	
or more explicitly,
\begin{equation} \label{mainDiff2}
 d \eta_{\bm z_n} =  \left(-    \frac{(AV)'(z)}{4 (AV)(z)} + \frac{1}{2 (z- z_n)}+  \frac{w_n}{2 w(\bm z) (z- z_n)} + \frac{z}{2  w(\bm z)} +\frac{\delta_1}{  w(\bm z)} \right) dz,
\end{equation}	
where $\delta_1$ is uniquely determined by the condition that the $\mathfrak b$-period of $\eta_{\bm z_n} $ is zero. Obviously, it has the only poles, all simple, at the zeros of $AV$ with residues $-1/2$, and at $\bm z_n$ and  $\infty^{(2)}$, both with residues $+1$.
\begin{lemma}
\label{lemma:period}
With the conditions above,
 \begin{equation}
\label{Abel2bis}
\oint_{\mathfrak a}  d \eta_{\bm z_n} = 
\beta _1+\beta _3 - \log(s_{n,3})   \mod (2\pi i),
\end{equation}
with $\beta_1$ and $\beta _3$ given in \eqref{def:theta1} and \eqref{theta3}, respectively.
\end{lemma}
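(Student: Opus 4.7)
The plan is to split $d\eta_{\bm z_n}=d\eta^{*}+d\Omega_{\bm z_n,\infty^{(1)}}$ and evaluate each $\mathfrak a$-period separately, then match the result against the expansion of $\beta _1+\beta _3-\log(s_{n,3})$ dictated by \eqref{def:theta1}, \eqref{def:theta2} and \eqref{theta3}.

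First I would compute $\oint_{\mathfrak a}d\eta^{*}$. Write
\[
d\eta^{*}=-\tfrac14\,d\log(AV)-\tfrac12\,d\nu_0^{*}.
\]
The first summand is a meromorphic differential pulled back from $\overline\C$, so integrating it along $\mathfrak a$ amounts to integrating along the projection $\pi(\mathfrak a)$, a Jordan loop in $\C$ enclosing $\Gamma_1$, i.e.\ enclosing $a_1$ and $v$ but none of the other zeros of $AV$. Since each factor of $AV$ contributes a simple pole of $d\log(AV)$ with residue $+1$, the residue theorem gives $\oint_{\mathfrak a}(-\tfrac14\,d\log(AV))=-\tfrac14(2\pi i)(2)=-\pi i$. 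For the second summand, the very definition \eqref{normalized} fixes the $\mathfrak a$-period of $d\nu_0^{*}$ to be $2\pi i\tau$ (with $\tau$ as in \eqref{defTau}), so $\oint_{\mathfrak a}(-\tfrac12\,d\nu_0^{*})=-\pi i\tau$. Consequently
\[
\oint_{\mathfrak a}d\eta^{*}=-\pi i(1+\tau).
\]

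Next I would evaluate $\oint_{\mathfrak a}d\Omega_{\bm z_n,\infty^{(1)}}$ via the Riemann bilinear relation applied to $\Omega_{\bm z_n,\infty^{(1)}}$ (normalized to have vanishing $\mathfrak b$-period) and to $d\nu_0^{*}$ (whose $\mathfrak a$-period is $2\pi i\tau$ and whose $\mathfrak b$-period is $2\pi i$). With the orientations fixed in Section \ref{sec:Chebotarev}, this relation yields
\[
\oint_{\mathfrak a}d\Omega_{\bm z_n,\infty^{(1)}}=-\!\int_{\bm \infty^{(1)}}^{\bm z_n}d\nu_0^{*}.
\]
Substituting the right-hand side of \eqref{mainEqparameters} I get
\[
\oint_{\mathfrak a}d\Omega_{\bm z_n,\infty^{(1)}}=2\pi i\tau+\tau\log\!\left(\frac{s_{n,2}}{s_{n,1}}\right)+\log\!\left(\frac{s_{n,2}}{s_{n,3}}\right).
\]

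Adding the two contributions,
\[
\oint_{\mathfrak a}d\eta_{\bm z_n}=-\pi i+\pi i\tau+\tau\log\!\left(\frac{s_{n,2}}{s_{n,1}}\right)+\log\!\left(\frac{s_{n,2}}{s_{n,3}}\right).
\]
On the other hand, combining \eqref{def:theta1}, \eqref{def:theta2} and \eqref{theta3},
\[
\beta _1+\beta _3-\log(s_{n,3})=\log\!\left(\frac{s_{n,2}}{s_{n,3}}\right)+(1+\tau)\pi i+\tau\log\!\left(\frac{s_{n,2}}{s_{n,1}}\right),
\]
so the two expressions differ precisely by $2\pi i$, proving \eqref{Abel2bis}.

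The routine but delicate step is the sign/orientation bookkeeping in the bilinear relation and in the residue computation: one must make sure that $\pi(\mathfrak a)$ really encloses $\{a_1,v\}$ with the orientation used to produce $d\nu_0^{*}$'s stated periods, and that the path from $\infty^{(1)}$ to $\bm z_n$ in \eqref{mainEqparameters} is the one selected inside the fundamental polygon $\widetilde{\mathcal R}$, so that no additional lattice element is introduced modulo $2\pi i$. Once the conventions are fixed consistently, the equality collapses to the arithmetic above.
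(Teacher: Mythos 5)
Your proof is correct and follows essentially the same route as the paper: split $d\eta_{\bm z_n}=d\eta^{*}+d\Omega_{\bm z_n,\infty^{(1)}}$, compute $\oint_{\mathfrak a}d\eta^{*}$ directly, obtain $\oint_{\mathfrak a}d\Omega_{\bm z_n,\infty^{(1)}}=-\int_{\infty^{(1)}}^{\bm z_n}d\nu_0^{*}$ from the Riemann bilinear identities, and substitute \eqref{mainEqparameters}. Your intermediate value $\oint_{\mathfrak a}d\eta^{*}=-\pi i(1+\tau)$ differs from the paper's $\pi i(1-\tau)$ only by $2\pi i$ (the winding of the projected $\mathfrak a$-cycle around $\{a_1,v\}$ is $-1$ with the paper's orientation conventions, not $+1$), which is immaterial for the congruence mod $2\pi i$ being proved.
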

\begin{proof}
Direct calculation shows that
$$
\oint_{\mathfrak a}  d\eta^*= \pi i \left( 1-\tau\right),
$$
and from the Riemann's identities it follows that
\begin{equation}
\label{Abel}
\oint_{\mathfrak a} \Omega_{\bm z_n,\infty^{(1)}}= \frac{1}{2 M_3^0}   \int^{\bm z_n}_{\infty^{(1)}} d\nu_0= -   \int^{\bm z_n}_{\infty^{(1)}} d\nu_0^*.
\end{equation}
It remains to use \eqref{mainEqparameters} and the definition of $\beta _1$ and $\beta _2$ above.
\end{proof}
\begin{remark}
The uniqueness of $\bm z_n$ satisfying \eqref{Abel2bis} can be easily established:  for any other $\bm r \in \mathcal R$, $\nu_{\bm z_n}-\nu_{\bm r}$ is a meromorphic differential in $\mathcal R$ whose only poles (both simple) are at $\bm z_n$ (with residue $1$) and $\bm r $ (with residue $-1$), and with periods multiple of $2\pi i$, so that $\exp(\int^z d(\nu_{\bm z_n}-\nu_{\bm r}))$ has a single pole at $\bm r$, which is impossible. 
\end{remark}

Let
\begin{equation}
\label{def:v}
 {u_1(z)}=\exp\left(\int_{\infty^{(1)}}^{\bm z^{(1)}} d\eta_{\bm z_n}\right), \quad  {u_2(z)}=\exp\left(\int_{\infty^{(1)}}^{\bm z^{(2)}} d\eta_{\bm z_n}\right), 
\end{equation}
with $\quad \bm z^{(j)}=\pi^{-1}(z)\cap \mathcal R^{(j)}$. For $u_1$, the path of integration lies entirely in $ \mathcal R^{(1)}$, while for $u_2$ it goes from $\mathcal D_\pm^{(1)}$ into $\mathcal D_\mp^{(2)}$, crossing $\Gamma_2$ once, see Figure \ref{fig:rectanglepaths}.

\begin{figure}[h]
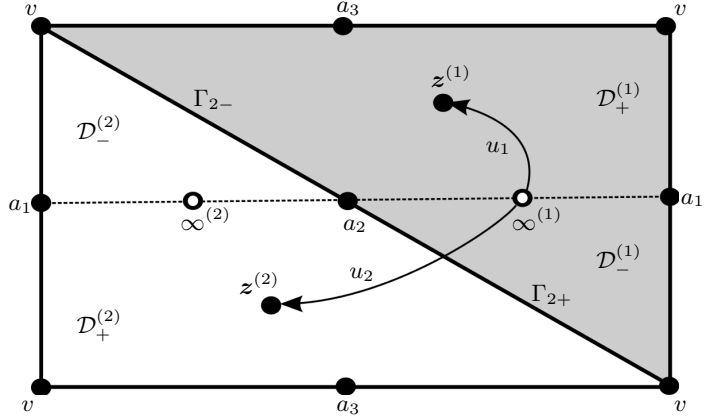

\centering \begin{overpic}[scale=0.9]%
{OpenRpaths}%
\put(61.5,48){\small $\bm z^{(1)}$}
\put(32,16){\small $\bm z^{(2)}$}
\put(-1.5,-2){\small $v$}
\put(-1,59){\small $v$}
\put(99,-2){\small $v$}
\put(99,59){\small $v$}
\put(-3.5,29){\small $a_1$}
\put(100,30){\small $a_1$}
\put(47,-2){\small $a_3$}
\put(47,59.5){\small $a_3$}
\put(48,26){\small $a_2$}
\put(23,26){\small $\infty^{(2)}$}
\put(74,26){\small $\infty^{(1)}$}
\put(7,40){\small $\mathcal D_-^{(2)}$}
\put(7,10){\small $\mathcal D_+^{(2)}$}
\put(87,20){\small $\mathcal D_-^{(1)}$}
\put(87,45){\small $\mathcal D_+^{(1)}$}
\put(70,38){\small $u_1$}
\put(49,18.5){\small $u_2$}
\put(77,15){\small $\Gamma_{2+}$}
\put(25,45){\small $\Gamma_{2-}$}
\end{overpic}
\caption{Paths of interration for functions $u_j$ defined in \eqref{def:v}.}
\label{fig:rectanglepaths}
\end{figure}

\begin{lemma}
\label{lemma:bdryvaluesV}
Functions $u_j$ are holomorphic in $\overline\C\sm (\gamma_1^\perp \cup \Gamma  \cup \gamma_2^\perp)$ (see Figure~\ref{fig:Gamma}), and have a continuous boundary values on $\gamma_1^\perp \cup \Gamma^\circ  \cup \gamma_2^\perp$ such that
\begin{equation} \label{bdryvaluesV}
(u_1)_+(z)= \begin{cases}
(u_1)_-(z), \quad  z\in \gamma_1^\perp    \cup \gamma_2^\perp,\\
(u_2)_-(z), \quad  z\in \Gamma_1^\circ \cup \Gamma_2^\circ, \\
- \dfrac{e^{\beta _1+\beta _3}}{s_{n,3}}(u_2)_-(z), \quad  z\in \Gamma_3^\circ;
\end{cases} \quad 
(u_2)_+(z)= \begin{cases}
-(u_2)_-(z), \quad   z\in \gamma_1^\perp    \cup \gamma_2^\perp, \\
(u_1)_-(z), \quad  z\in \Gamma_1^\circ \cup \Gamma_2^\circ, \\
 \dfrac{s_{n,3}}{e^{\beta _1+\beta _3}} (u_1)_-(z), \quad  z\in \Gamma_3^\circ.
\end{cases}
\end{equation}	
Moreover, as $z\to a_j$, $z \in \mathbb C \setminus \Gamma$, $j=1, 2, 3$, $
u_k(z)= \mathcal O(|z-a_j|^{-1/4})$, while as $z\to v$, $z \in \mathbb C \setminus \Gamma$, $
u_k(z)= \mathcal O(|z-v|^{-1/4})$. 
Additionally,
$$
u_1(z)=1+\mathcal O\left( \frac{1}{z}\right),  \quad u_2(z)= \mathcal O\left( \frac{1}{z}\right), \quad z\to \infty. 
$$
Finally, if $\mb z_n \in \mathcal R^{(1)}$ then $u_1$ has a simple zero at $z=z_n$ and $ u_2(z_n)\neq 0$; otherwise, $u_2$ has a simple zero at $z=z_n$ and $ u_1(z_n)\neq 0$.
\end{lemma}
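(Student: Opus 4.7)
The plan is to extract each claim of the lemma from the local and global data of the meromorphic differential $d\eta_{\bm z_n}$ on $\mathcal R$, combined with the sheet-gluing rules and the prescribed choice of integration paths shown in Figure~\ref{fig:rectanglepaths}. First I would verify holomorphy: the cut set $\gamma_1^\perp\cup\Gamma\cup\gamma_2^\perp$ splits $\overline\C$ into the two simply connected domains $D_\pm$, and for $z$ in either of them the integration paths in \eqref{def:v} are determined up to homotopy on $\mathcal R$ and vary continuously with $z$, while avoiding all poles of $d\eta_{\bm z_n}$. Since $d\eta_{\bm z_n}$ is meromorphic on $\mathcal R$, the functions $u_j$ are single-valued and analytic on $\overline\C\setminus(\gamma_1^\perp\cup\Gamma\cup\gamma_2^\perp)$, the only possible singularity being at $z_n$.

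For the jumps on $\Gamma^\circ$ I would use the gluing rule identifying $\bm z^{(1)}_+$ with $\bm z^{(2)}_-$ along each $\Gamma_j^\circ$. Then $u_1(z)_+$ and $u_2(z)_-$ are exponentials of integrals of $d\eta_{\bm z_n}$ with the same terminal point on $\mathcal R$, and their ratio equals $\exp$ of the period of $d\eta_{\bm z_n}$ along the closed loop obtained by concatenating the $u_1$-path with the reverse of the $u_2$-path. For $j=1,2$ this loop is null-homologous in the polygon $\widetilde{\mathcal R}$ of Figure~\ref{fig:rectangle}, yielding $(u_1)_+=(u_2)_-$. For $j=3$ the loop is homologous to the $\mathfrak a$-cycle of Figure~\ref{fig:cycles}, so its contribution is $\exp(\oint_{\mathfrak a} d\eta_{\bm z_n})=e^{\beta_1+\beta_3}/s_{n,3}$ modulo $2\pi i$ by Lemma~\ref{lemma:period}; the additional $-1$ appears because the loop passes once around a branch point of $\mathcal R$, which flips the chosen branch of $w=(AV)^{1/2}$. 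The second column of \eqref{bdryvaluesV} follows by interchanging the $+$ and $-$ boundary values and using the twin gluing $\bm z^{(1)}_-=\bm z^{(2)}_+$.

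For the jumps on $\gamma_1^\perp\cup\gamma_2^\perp$ I observe that these rays are not cuts of $\mathcal R$, hence $\bm z^{(1)}$ and $\bm z^{(2)}$ depend continuously on $z$ as it crosses them. The $u_1$-path stays entirely in $\mathcal R^{(1)}$ and may be taken disjoint from $\gamma_j^\perp$, so $(u_1)_+=(u_1)_-$. The $u_2$-path, which crosses $\Gamma_{2+}$ from sheet one to sheet two and then reaches $\bm z^{(2)}$ on sheet two, must be deformed around the ramification point $a_j$ as $z$ traverses $\gamma_j^\perp$; in the local coordinate $t=\sqrt{z-a_j}$ on $\mathcal R$ this is a half-loop, and the residue $-1/2$ of $d\eta_{\bm z_n}$ in $t$ together with the orientation of $w$ produces the factor $-1$, giving $(u_2)_+=-(u_2)_-$.

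The remaining items are read off from the pole structure of $d\eta_{\bm z_n}$. Near $a_j$ or $v$, in the local parameter $t$ on $\mathcal R$ one has $d\eta_{\bm z_n}=-\tfrac12\,dt/t+O(1)\,dt$, hence $u_j=\mathcal O(t^{-1/2})=\mathcal O(|z-a_j|^{-1/4})$ and analogously for $v$. At $\infty^{(1)}$ the residues of $d\eta^*$ and $d\Omega_{\bm z_n,\infty^{(1)}}$ cancel, so $d\eta_{\bm z_n}$ is regular there and $u_1(\infty)=1$ with $u_1(z)=1+\mathcal O(1/z)$; at $\infty^{(2)}$ the residue is $+1$, forcing $u_2(z)=\mathcal O(1/z)$. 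The residue $+1$ at $\bm z_n$ produces a simple zero of that branch of $u_j$ whose path terminates on the sheet of $\bm z_n$, while the other branch, whose path terminates on the opposite sheet, is regular and nonzero at $z_n$. The hard part, as usual for such Riemann–Hilbert parametrices, is the precise identification of the homology class of the loop responsible for the jump on $\Gamma_3^\circ$ and the consistent bookkeeping of the signs contributed by $w^{1/2}$ after the sheet transitions; these are routine once a picture of $\widetilde{\mathcal R}$ with fixed orientations of $\mathfrak a$ and $\mathfrak b$ is drawn, but they must be executed with care.
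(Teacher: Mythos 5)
Your proposal is correct and follows essentially the route the paper itself implicitly takes: the lemma is stated there without proof as a direct verification from the residue structure of $d\eta_{\bm z_n}$, the crosswise gluing of the two sheets along $\Gamma$, and the period computation of Lemma~\ref{lemma:period}, which is exactly the argument you outline (holomorphy from simple connectivity of $D_\pm$ and integrality of the residues off the cut set, jumps as exponentiated periods of the difference loops, local exponents from the residues $-1/2$, $+1$, $+1$). The one imprecise spot is your explanation of the $-1$ across $\gamma_j^\perp$: the relevant closed loop on $\mathcal R$ is a \emph{full} loop in the local coordinate $t=(z-a_j)^{1/2}$ (its $z$-projection winds twice around $a_j$), contributing $\exp\bigl(2\pi i\,(-1/2)\bigr)=-1$, not a half-loop — but this does not affect the validity of the approach, whose remaining homology and sign bookkeeping you correctly identify and defer.
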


With these two functions we define in $\C\setminus \Gamma$
\begin{equation} \label{defNtilde}
\mathbf{\widetilde N}_{11}(z) =u_1(z), \quad \mathbf{\widetilde N}_{12}(z)=\begin{cases} u_2(z), & \text{if } z\in D_+ , \\
-u_2(z), & \text{if } z \in D_-.
\end{cases}
\end{equation}	
Recall that the simply connected domains $D_\pm$ are limited by $\gamma_1^\perp \cup \Gamma  \cup \gamma_2^\perp$, and  ${D_+}$ is the one containing $a_3$ on its boundary.
\begin{remark}
\label{remark:exceptional}
As we have noticed before, it may happen that either $\bm z_n=\bm \infty^{(1)}$ or $\bm z_n=\bm \infty^{(2)}$. In the first case, when condition \eqref{pathology} holds, we just have ${\eta_{\bm z_n}} = \eta^*$, so that $\bm N_{11}$ has a zero at infinity. This, as it follows from the asymptotic formulas below, will mean that the index $n$ is not normal, see the expression of $\chi$ in \eqref{def:chi}. In the second case,  ${\eta_{\bm z_n}} ={\eta_{\bm \infty^{(2)}}} $ has a simple pole at $\infty^{(2)}$  with residue $+2$, which creates a double zero of  $\bm N_{12}$ at infinity. 
\end{remark}

Furthermore, consider a family of functions $q$ on $\mathcal R$ of the form
$$
q(\bm z)= a+b\left( \frac{w(\bm z)+w_n}{z-z_n}-z\right),  \quad a,b\in \C.
$$
Each such a function has a simple pole at $\bm z_n=(z_n, w_n)$ and at $\mb \infty^{(2)}$. There is a unique combination of constants $a$, $b$, 
such that additionally
$$
q(\infty^{(1)})=0 \quad \text{and}\quad \lim_{\bm z\to \infty^{(2)}} q(\bm z)\widetilde{\bm N}_{12}(z)=1.
$$

Let $q^{(j)}(z)=q(\bm z^{(j)})$ be the values of $q$ on the $j$-th sheet. Then set
$$
\widetilde{\bm N}_{21}(z)= q^{(1)}(z) \widetilde{\bm N}_{11}(z), \quad \widetilde{\bm N}_{22}(z)= q^{(2)}(z) \widetilde{\bm N}_{12}(z).
$$
This defines completely the matrix-valued function $\widetilde{\bm N}=(\widetilde{\bm N}_{ij})$.  
Finally, we assembly ${\bm N}$ as in \eqref{defNouter} using this matrix $\widetilde{\bm N}$ and function $F$ given by \eqref{LambdaDifferential}--\eqref{def:Xi} or \eqref{defLambdaCauchy} with parameters \eqref{def:theta1}--\eqref{def:theta2}. Direct verification shows that the following statement holds true:
\begin{proposition}\label{prop:global}
Matrix ${\bm N}$ constructed above solves the RH problem (RH-N1)--(RH-N3).
\end{proposition}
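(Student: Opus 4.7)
The plan is to verify the three conditions (RH-N1)--(RH-N3) for the ansatz $\bm N = F(\infty)^{\sigma_3}\widetilde{\bm N}F^{-\sigma_3}$ by direct computation, exploiting the fact that $F$ is scalar-valued. Since $F^{\sigma_3}$ is diagonal, the jump of $\bm N$ across $\Gamma_j^\circ$ factors as
\[
\bm N_-^{-1}\bm N_+ = F_-^{\sigma_3}\, \widetilde{\bm N}_-^{-1}\widetilde{\bm N}_+ \, F_+^{-\sigma_3},
\]
and the local behavior of $\bm N$ at each singular point inherits that of $\widetilde{\bm N}$, since the scalar factors $F^{\pm\sigma_3}$ are bounded and non-vanishing on $\overline{\C}\setminus\Gamma$ by Lemma \ref{lemma:Lambda}.

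For (RH-N1) I would work arc by arc. On any $\Gamma_j^\circ$ the columns of $\widetilde{\bm N}_+$ turn out to be scalar multiples of the columns of $\widetilde{\bm N}_-$ in swapped order: the boundary relations $(u_1)_+=(u_2)_-$, $(u_2)_+=(u_1)_-$ on $\Gamma_1\cup\Gamma_2$ from Lemma \ref{lemma:bdryvaluesV} are complemented by the sheet-swap identity $q^{(j)}_+ = q^{(3-j)}_-$ across the branch cut, so $\widetilde{\bm N}_-^{-1}\widetilde{\bm N}_+$ is antidiagonal. The $\pm$ sign in the definition of $\widetilde{\bm N}_{12}$ across $D_\pm$ flips exactly once upon crossing $\Gamma_2$ (but not $\Gamma_1$, for the labelling chosen), and this flip is matched by the sign in $F_+F_- = -s_{n,2}$ versus $F_+F_- = s_{n,1}$ from \eqref{Fbdryvalues}. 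Conjugation by $F_\pm^{\pm \sigma_3}$ then converts the antidiagonal entries to precisely $s_{n,j}$ and $-1/s_{n,j}$. On $\Gamma_3^\circ$ the same pattern holds, except that $u_1, u_2$ pick up the extra factor $\pm e^{\beta_1+\beta_3}/s_{n,3}$ coming from the $\mathfrak a$-period monodromy of $\eta_{\bm z_n}$ (Lemma \ref{lemma:period}); this is cancelled against $F_+F_- = e^{\beta_1+\beta_3}$ to yield the prescribed jump with $s_{n,3}$ and $-1/s_{n,3}$.

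For (RH-N2) the verification reduces to the normalizations built into $F$, $u_j$ and $q$. By Lemma \ref{lemma:bdryvaluesV}, $u_1(\infty)=1$ and $u_2$ vanishes like $1/z$ at infinity; the two defining normalizations on the constants in $q$ force $q^{(1)}(\infty)=0$ and $\lim_{\bm z\to\bm\infty^{(2)}}q(\bm z)\widetilde{\bm N}_{12}(z)=1$, so $\widetilde{\bm N}_{21}(\infty)=0$ and $\widetilde{\bm N}_{22}(\infty)=1$. Therefore $\widetilde{\bm N}(\infty)=\bm I$; conjugation by $F(\infty)^{\sigma_3}$ fixes $\bm I$, yielding $\bm N(\infty)=\bm I$, and the $\mathcal O(1/z)$ correction follows from the decay rates of $u_2$, $q^{(1)}u_1$, and $F(z)/F(\infty)-1$. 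For (RH-N3) the endpoint estimates are direct: $u_1, u_2$ are $\mathcal O(|z-a_j|^{-1/4})$ and $\mathcal O(|z-v|^{-1/4})$ by Lemma \ref{lemma:bdryvaluesV}, while $q(\bm z)$ stays bounded at $a_j$ and $v$ since it is a polynomial in $z$ plus a term involving $w(\bm z)$, which vanishes at those branch points.

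The heart of the verification---and the main obstacle---is the sign and phase bookkeeping in (RH-N1), particularly on $\Gamma_3$. It is not automatic that the three arcs yield the correct jump simultaneously; rather, this is precisely what the specific choices \eqref{def:theta1}, \eqref{def:theta2}, \eqref{theta3} of $\beta_1, \beta_2, \beta_3$ and the Jacobi inversion identity \eqref{mainEqparameters} defining $\bm z_n$ are engineered to accomplish. Lemma \ref{lemma:period} is the precise statement that the $\mathfrak a$-period of $\eta_{\bm z_n}$ equals $\beta_1+\beta_3-\log s_{n,3}\pmod{2\pi i}$, which is exactly the monodromy needed to make the $\Gamma_3$ jump come out right; once this compatibility is in place, the remaining arcs and the asymptotics follow essentially formally.
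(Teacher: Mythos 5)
This is the verification the paper itself leaves implicit (the text asserts only that ``direct verification'' establishes the proposition), and your outline is the right one: conjugation of the antidiagonal jump of $\widetilde{\bm N}$ by the scalar $F^{\sigma_3}$, the column swap from Lemma \ref{lemma:bdryvaluesV} combined with the sheet swap of $q$, the cancellation of the factor $e^{\beta_1+\beta_3}/s_{n,3}$ on $\Gamma_3$ against $F_+F_-=e^{\beta_1+\beta_3}$, and the normalizations at infinity together with the $-1/4$ endpoint estimates. One bookkeeping slip: the sign prefactor in \eqref{defNtilde} changes across \emph{both} $\Gamma_1$ and $\Gamma_2$ (each of these arcs, like $\gamma_1^\perp$ and $\gamma_2^\perp$, separates $D_+$ from $D_-$, which is what cancels the jump $(u_2)_+=-(u_2)_-$ on the rays); the relative minus sign that must be absorbed by $F_+F_-=-s_{n,2}$ arises instead from orientation --- the $+$ side of $\Gamma_1$ lies in $D_+$ while the $+$ side of $\Gamma_2$ lies in $D_-$ --- and not from the prefactor being continuous across $\Gamma_1$.
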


\subsection{Local parametrix} 

Matrix $\bm T$ defined in \eqref{def:matrixT} has jumps that are asymptotically close to the identity matrix for $n$ large enough, as long as we stay away from $\Gamma$. However, this behavior fails in a neighborhood of $\mathcal A$ and the Chebotarev center $v$, where we need to perform a separate analysis in order to find an appropriate model. Here we describe only the construction of the local parametrix $\bm P$ at $z=v$ (around the  branch points $a_j$ matrix $\bm P$ is built in the way described in detail in \cite{MR2087231}). 

We take a small $\delta>0$ and define $ {D_\delta}:=\{z\in \C:|z-v|<\delta\}$, $ {B_\delta}:=\{z\in \C:|z-v|=\delta\}$, assuming that $D_\delta \cap \mathcal A =\emptyset$, see Figure~\ref{fig:Local}.

\begin{figure}[h]
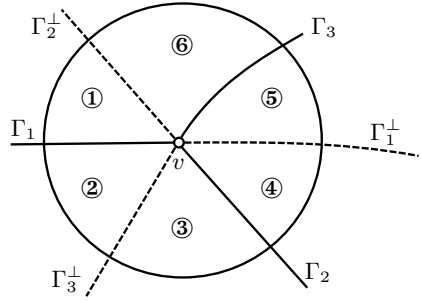

\centering \begin{overpic}[scale=1.1]%
{Local}%
\put(39.5,32){\small $v$}
\put(0,40){\small $\Gamma_1$}
\put(88,39){\small $\Gamma_1^\perp$}
\put(72,5){\small $\Gamma_2$}
\put(5,65){\small $\Gamma_2^\perp$}
\put(73.5,65){\small $\Gamma_3$}
\put(10,3){\small $\Gamma_3^\perp$}
    \put(15,47){  \large \ding{172}  }
 \put(15,25){  \large  \ding{173}  }
 \put(37,15){  \large  \ding{174}  }
    \put(37,60){  \large \ding{177}  }
 \put(59,47){  \large  \ding{176}  }
 \put(59,25){  \large  \ding{175}  }
\end{overpic}
\caption{Local parametrix.}
\label{fig:Local}
\end{figure}

 The local parametrix at $z=v$ has the form
 \begin{equation}
\label{P}
 {\textbf P(z)}:=\bm E(z) \bm \Psi\left(n^{2/3} \varphi(z) \right) b^{\sigma_3} \bm B \, \Phi^{-n\sigma_3}(z), \quad z\in D_\delta \setminus (\Gamma\cup \Gamma^\perp). 
\end{equation}
Here $\Phi$ is the conformal mapping defined in \eqref{defPhiconformal}, and the rest of the ingredients are:
\begin{itemize}
\item Function $\varphi$, defined piece-wise in each sector: for $z\in D_\delta \setminus (\Gamma\cup \Gamma^\perp)$,
\begin{equation} \label{conformalLocal}
\varphi(z) :=\begin{cases} \displaystyle \left( \frac{3}{2}\int_{v}^z T(t) dt \right)^{2/3} & \text{if } z\in \text{\ding{175}} \cup \text{\ding{176}},   \\
\displaystyle \left(- \frac{3}{2}\int_{v}^z T(t) dt \right)^{2/3} & \text{otherwise, }
\end{cases}
\end{equation}	
where we take the main branch of the power function. Then $\varphi$ is a conformal mapping of $D_\delta$ onto a neighborhood of the origin, $\varphi(v)=0$, and $\Gamma_1^\perp$ is mapped onto the positive semi axis. 

\item Constants $ {b_1}$ and  $ {b_2}$, determined up to a change of sign by
$$
b_1 b_2=1/t_1, \quad b_1 /b_2=-t_2/t_3,
$$
and 
$$
b:=\begin{cases}
b_1, & \text{if } z \in  \text{\ding{172}} \cup \text{\ding{176}} \cup \text{\ding{177}}, \\
b_2, & \text{if } z \in  \text{\ding{173}} \cup \text{\ding{174}} \cup \text{\ding{175}}. \\
\end{cases}
$$
\item Matrices
$$
 {\bm B}:=\begin{cases}
\textbf I, & \text{if } z \in  \text{\ding{172}} \cup \text{\ding{173}} \cup \text{\ding{174}} \cup \text{\ding{177}}, \\
\begin{pmatrix} 0 & -t_2 \\ 1/t_2 & 0
\end{pmatrix}
, & \text{if } z \in     \text{\ding{175}},\\
\begin{pmatrix} 0 & t_3 \\ -1/t_3 & 0
\end{pmatrix}
, & \text{if } z \in     \text{\ding{176}},\\
\end{cases}
$$
and
$$
 {\bm M_2}=\bm M_2(z,n) :=\begin{cases}
(\kappa_3^n/b_1)^{\sigma_3} , & \text{if } z \in  \text{\ding{172}} \cup  \text{\ding{177}}, \\
(\kappa_2^n/b_2)^{\sigma_3}, & \text{if } z \in  \text{\ding{173}} \cup  \text{\ding{174}},\\
\begin{pmatrix} 0 & b_2 t_2 \\ -1/(b_2 t_2) & 0
\end{pmatrix}
, & \text{if } z \in z \in  \text{\ding{175}} \cup  \text{\ding{176}}.\\
\end{cases}
$$
\item Matrix valued functions in $D_\delta\setminus \Gamma_1$,
\begin{equation*} \label{defMatrixM1E}
 {\bm M_1(z)}:=\sqrt{\pi}\, \begin{pmatrix}
e^{\pi i/6} & -e^{\pi i/6} \\
e^{-\pi i/3} & e^{-\pi i/3}
\end{pmatrix}\, \varphi^{\sigma_3/4}(z),
\end{equation*}	
where we take the main branch of the root, and
\begin{equation} \label{defMatrixE}
 {\bm E(z)}=\bm E(z,n):= \bm N(z) \bm M_2(z) \bm M_1(z) n^{\sigma_3/6}, \quad z\in D_\delta\setminus \Gamma,
\end{equation}	
with $\bm N$ constructed in subsection \ref{sec:outer}, see \eqref{defNouter}. $\bm E(z)$ extends in fact as a  holomorphic function to the whole $D_\delta$.
\item The Airy parametrix, $ {\bm \Psi}$, defined as
\begin{align*}
\bm \Psi(\zeta):= & \begin{pmatrix} Ai(\zeta) & Ai(\omega^2 \zeta) \\
Ai'(\zeta) & \omega^2Ai'(\omega^2 \zeta)
\end{pmatrix} \omega^{-\sigma_3/4}, \quad \zeta \in \varphi^{-1}(\text{\ding{176}} \cup  \text{\ding{177}}), \\
\bm \Psi(\zeta):= & \begin{pmatrix} Ai(\zeta) & Ai(\omega^2 \zeta) \\
Ai'(\zeta) & \omega^2Ai'(\omega^2 \zeta)
\end{pmatrix} \omega^{-\sigma_3/4}\, \begin{pmatrix} 1 & 0 \\ -1 & 1
\end{pmatrix}
, \quad \zeta \in \varphi^{-1}(\text{\ding{172}}  ), \\
\bm \Psi(\zeta):= & \begin{pmatrix} Ai(\zeta) & -\omega^2 Ai(\omega \zeta) \\
Ai'(\zeta) & - Ai'(\omega \zeta)
\end{pmatrix} \omega^{-\sigma_3/4}, \quad \zeta \in \varphi^{-1}(\text{\ding{174}} \cup  \text{\ding{175}}), \\
\bm \Psi(\zeta):= & \begin{pmatrix} Ai(\zeta) & -\omega^2 Ai(\omega \zeta) \\
Ai'(\zeta) & - Ai'(\omega \zeta)
\end{pmatrix} \omega^{-\sigma_3/4}\, \begin{pmatrix} 1 & 0 \\  1 & 1
\end{pmatrix}, \quad \zeta \in \varphi^{-1}(\text{\ding{173}} ), 
\end{align*}
where $\omega=\exp(2\pi i/3)$, see e.g.~\cite{MR2001f:42037, MR2001g:42050}.
 \end{itemize}
 
 \begin{theorem}
\label{thm:P}
 Matrix-valued function $\mb P$ given by \eqref{P} solves the following boundary-value problem in $D_\delta$:
 \begin{enumerate}
\item[(RH-P1)] It has continuous boundary values $\bm P_\pm$ on both sides of all curves, and with the specified orientation,
$$
\bm P_+(z)=\bm P_-(z)\,\bm J_P (z), \quad z\in (\Gamma^\circ \cup \Gamma^\perp)\cap D_\delta, 
$$
where 
  \begin{align}\label{J_S3}
\bm J_P = &  \begin{pmatrix}
0 & s_{n,j} \\
-s_{n,j}^{-1} & 0 \end{pmatrix}, \quad \text{if } z \in \Gamma_j^\circ\cap D_\delta; \\
\label{J_S4}
= & \begin{pmatrix}
1 & 0 \\ 
 \frac{t_j }{t_{j-1} t_{j+1}}    \Phi ^{-2n}(z) & 1 \end{pmatrix} 
, \quad \text{if } z \in \Gamma_j^\perp \cap D_\delta. 
\end{align} 

\item[(RH-P2)] $\bm P(z)=  \left( \bm I +\mathcal O(1/n)\right) \mb N(z) $, for $z\in B_\delta$.
\item[(RH-P3)] As $z\to v$, $z \in \mathbb C \setminus ( \Gamma\cup \Gamma^\perp)$, 
$  \bm P(z)=  \mathcal O(1)$.
\end{enumerate}
\end{theorem}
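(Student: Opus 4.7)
The overall strategy is to decompose the verification into the three properties (RH-P1), (RH-P2), (RH-P3) and exploit the fact that the construction was \emph{designed} so that the prefactor $\bm E$ is holomorphic in the whole disk $D_\delta$. Once this is established, the only ``active'' factors inside $D_\delta$ are the Airy model $\bm \Psi(n^{2/3}\varphi(\cdot))$, the piecewise constants $b^{\sigma_3}$ and $\bm B$, and the scalar $\Phi^{-n\sigma_3}$; each of these has a well-understood jump structure that simply needs to be composed carefully sector by sector.

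First I would show that $\bm E(z)=\bm N(z)\bm M_2(z)\bm M_1(z)n^{\sigma_3/6}$ extends holomorphically across $\Gamma\cap D_\delta$ and is bounded at $z=v$. The function $\varphi^{\sigma_3/4}$ hidden inside $\bm M_1$ has an antidiagonal-like jump across $\Gamma_1\cap D_\delta$ (where $\varphi$ is real negative on one side), and combined with the piecewise-constant $\bm M_2$ this jump is precisely the one required to cancel the antidiagonal jump \eqref{boundaryforN} of $\bm N$. A direct computation on $\Gamma_2$ and $\Gamma_3$ confirms the same cancellation once the identities $b_1b_2=1/t_1$, $b_1/b_2=-t_2/t_3$, together with $\kappa_1=\kappa_2\kappa_3$ from Lemma~\ref{lemma1} and $\kappa_j^n=s_{n,j}/t_j$ are used. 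The mild $|z-v|^{-1/4}$ behavior of $\bm N$ is compensated by the factor $\varphi^{\sigma_3/4}\sim (z-v)^{\sigma_3/4}$, so $\bm E$ is bounded at $v$; being analytic off $\Gamma$ with only a removable singularity there, it is holomorphic on $D_\delta$.

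For (RH-P1) one then reads off the jumps of $\bm P$ directly from $\bm \Psi b^{\sigma_3}\bm B\Phi^{-n\sigma_3}$. The Airy parametrix $\bm \Psi$ carries the standard Stokes jumps across the four rays $\arg\zeta\in\{0,2\pi/3,\pi,-2\pi/3\}$, and under the conformal map $\varphi$ these rays pull back precisely to the six arcs $\Gamma_j^\circ\cap D_\delta$ and $\Gamma_j^\perp\cap D_\delta$ (with the sector labels \ding{172}--\ding{177}). Across a $\Gamma$-arc the Airy jump is antidiagonal and, after conjugation by the piecewise $b^{\sigma_3}\bm B$ and combining the boundary values $\Phi_\pm^{-n\sigma_3}$ through Lemma~\ref{lemma1} (which gives $\Phi_+\Phi_-=\kappa_j$ and hence $\Phi_-^{-n}\Phi_+^{-n}=\kappa_j^{-n}$), one recovers \eqref{J_S3}. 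Across a $\Gamma^\perp$-arc the Airy jump is unitriangular; $\Phi$ has no jump there, while the piecewise $b^{\sigma_3}\bm B$ computes the scalar entry $t_j/(t_{j-1}t_{j+1})$ appearing in \eqref{J_S4}. This is essentially book-keeping and is the step most prone to sign errors.

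Next, for (RH-P2) I would use the classical large-argument asymptotics of the Airy parametrix in each sector,
\begin{equation*}
\bm\Psi(\zeta)=\frac{1}{2\sqrt{\pi}}\,\zeta^{-\sigma_3/4}\begin{pmatrix}1 & i \\ i & 1\end{pmatrix}\bigl(\bm I+O(\zeta^{-3/2})\bigr)\exp\!\Bigl(-\tfrac{2}{3}\zeta^{3/2}\sigma_3\Bigr),
\end{equation*}
valid uniformly on $B_\delta$ since there $n^{2/3}\varphi(z)\to\infty$. The factor $\bm M_1^{-1}n^{-\sigma_3/6}$ is tailored exactly to absorb the constant matrix and the $\zeta^{-\sigma_3/4}$ prefactor. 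By \eqref{conformalLocal} and \eqref{defPhiconformal} we have $\frac{2}{3}\varphi(z)^{3/2}=\pm\int_v^z T(t)\,dt=\pm\log\Phi(z)$, so $\exp(-\tfrac{2}{3}(n^{2/3}\varphi)^{3/2}\sigma_3)=\Phi^{\pm n\sigma_3}$ sector-wise; combined with the explicit $\Phi^{-n\sigma_3}$ in \eqref{P} and the factor $b^{\sigma_3}\bm B$ it produces $\bm N(z)(\bm I+O(1/n))$ on $B_\delta$. Finally (RH-P3) is immediate: the Airy function and its derivative are entire, $\varphi$ is holomorphic at $v$, and $\bm E$, $b^{\sigma_3}\bm B$, $\Phi^{-n\sigma_3}$ are all bounded there. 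The main obstacle is the sector-by-sector jump bookkeeping in (RH-P1), where the relations between $t_j$, $b_j$, $\kappa_j$ and $s_{n,j}$ must combine with the Airy Stokes data to give exactly \eqref{J_S3}--\eqref{J_S4}.
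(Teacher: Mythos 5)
Your proposal is correct and follows essentially the same route the paper takes (the paper in fact leaves Theorem \ref{thm:P} as a ``direct verification'' of the construction): establish that $\bm E$ is holomorphic in $D_\delta$ by cancelling the jumps of $\bm N$, $\bm M_2$ and $\varphi^{\sigma_3/4}$, read off (RH-P1) from the Stokes structure of the Airy parametrix conjugated by $b^{\sigma_3}\bm B\,\Phi^{-n\sigma_3}$, obtain (RH-P2) from the large-$\zeta$ Airy asymptotics via $\tfrac{2}{3}\varphi^{3/2}=\pm\log\Phi$, and note (RH-P3) from entirety of $Ai$. The only cosmetic slips are that it is $\bm M_1 n^{\sigma_3/6}$ (not its inverse) that multiplies $\bm\Psi$ on the left and cancels the $\zeta^{-\sigma_3/4}$ prefactor, and that the $\mathcal O(1/n)$ in (RH-P2) sits to the left of $\bm N$, which is harmless since $\bm N^{\pm1}$ are uniformly bounded on $B_\delta$.
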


\subsection{Asymptotic analysis} 

In the final transformation of the original problem (RH-Y1)--(RH-Y4) we define a
matrix valued function $\bm R$ in the form $\bm T(z) \bm A ^{-1}(z)$, where, roughly speaking, $\bm A = \bm N$ away from $\Gamma$ and $\bm A=\bm P$ in a neighborhood of $\mathcal A$ and $v$. The explicit formula for $\bm A$  in $D_\delta$ is given above, while it is built in terms of the Bessel functions in a neighborhood of the branch points $a_j$, see \cite{MR2087231} for details.  The inverses of all these matrices exist, since
    the determinants of these matrices are equal to $1$. 
    
The construction of $\bm N$ and $\bm P$ is such that
$$
 \bm  R_+(z) =  \bm  R_-(z) \left(\bm I + \mathcal O\left(\frac{1}{n} \right)\right), 
        \qquad  n\to\infty,
$$
uniformly on a finite set of contours in $\C$. Following the already standard reasoning we conclude that 
    \begin{equation} \label{AsympR}
      \bm  R(z) = \bm I + \mathcal O\left(\frac{1}{n} \right), 
        \qquad  n\to\infty,
    \end{equation}
uniformly in $\C$. 
    The relation (\ref{AsympR}) is the main term in
    the asymptotics for $\bm R$ and it is enough to give the leading term in the
    asymptotics for $\bm Y$. 
    

For instance, taking into account \eqref{def:matrixT} and \eqref{AsympR} we see that locally uniformly in $\C\setminus \Gamma$ we have
$$
\textbf Y(z)=c^{-n\sigma_3} \left( \bm I + \mathcal O\left( \frac{1}{n}\right) \right) \bm N(z) f(z)^{-\sigma_3/2}\Phi^{n\sigma_3}(z),
$$
where $c$ was defined in \eqref{def:Phi}.
In particular, working out the expressions for $\bm Y_{11}$ and $\bm Y_{12}$ we get:
\begin{theorem}\label{thm:exteriorAsympRH}
Locally uniformly in $\C\setminus \Gamma$,
\begin{equation}
\label{asymptoticsOutside}
\begin{split}
Q_n(z)
= \left( \frac{\Phi(z)}{c} \right)^n  \frac{F(\infty)}{f(z)^{1/2}(z)  F(z)} \left[ \exp\left(\int_{\infty^{(1)}}^{\bm z^{(1)}} d\eta_{\bm z_n}\right) \left( 1 + \mathcal O\left( \frac{1}{n}\right) \right)  +    \mathcal O\left( \frac{1}{n}\right)   \right],
\end{split}
\end{equation}
and
$$
  R_n(z)=    \frac{ f(z)^{1/2}}{\left( c \Phi(z) \right)^{n}}F(\infty) F(z) \left[ \pm \exp\left(\int_{\infty^{(1)}}^{\bm z^{(2)}} d\eta_{\bm z_n}\right) \left( 1 + \mathcal O\left( \frac{1}{n}\right) \right)  +    \mathcal O\left( \frac{1}{n}\right)    \right].
$$
\end{theorem}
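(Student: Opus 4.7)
The plan is to complete the Deift--Zhou nonlinear steepest descent analysis whose ingredients have been assembled in Sections~\ref{sec:outer}--\ref{sec:RH}, and then to read off the $(1,1)$ and $(1,2)$ entries in order to recover $Q_n$ and $R_n$. Starting from $\bm Y$ solving (RH-Y1)--(RH-Y4), I first pass to the normalized matrix $\bm T$ defined in \eqref{def:matrixT}. On $\Gamma^\circ$ the jump of $\bm T$ is exactly the one solved by the global parametrix $\bm N$ of Proposition~\ref{prop:global}; to control the off-contour behavior I then perform the standard opening of lenses, factoring the jump on $\Gamma^\circ$ and deforming along the orthogonal trajectories $\Gamma^\perp$ to obtain an intermediate matrix $\bm S$. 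Its jumps away from $\Gamma^\circ$ carry the factor $\Phi^{-2n}$, as in \eqref{J_S4}, and hence tend to the identity exponentially fast, uniformly outside fixed neighborhoods of $\mathcal A \cup \{v\}$.

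Next I introduce the global comparison $\bm R(z) := \bm S(z)\,\bm A(z)^{-1}$, where $\bm A = \bm P$ inside the disk $D_\delta$ around $v$ (constructed in \eqref{P}) and inside the analogous disks built from Bessel parametrices around each $a_j$ as in \cite{MR2087231}, while $\bm A = \bm N$ elsewhere. By construction $\bm R$ has no jump on the parts of $\Gamma^\circ$ outside these disks, a jump of the form $\bm I + \mathcal O(1/n)$ on the boundary circles $B_\delta$ by virtue of (RH-P2) together with the matching estimate at the branch points, and exponentially small jumps on the remaining pieces of $\Gamma^\perp$. A standard small-norm argument then yields the estimate \eqref{AsympR}, namely $\bm R = \bm I + \mathcal O(1/n)$ uniformly in $\C$.

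Unwinding the transformations gives $\bm Y(z) = c^{-n\sigma_3}\bm R(z)\bm N(z)f(z)^{-\sigma_3/2}\Phi(z)^{n\sigma_3}$ on compact subsets of $\C\setminus\Gamma$. Writing $\bm N = F(\infty)^{\sigma_3}\widetilde{\bm N}\,F^{-\sigma_3}$ with $\widetilde N_{11}(z) = u_1(z)$ and $\widetilde N_{12}(z) = \pm u_2(z)$ as in \eqref{defNtilde}, I expand the $(1,1)$ entry as
$$
Q_n(z) = Y_{11}(z) = \left(\frac{\Phi(z)}{c}\right)^{n} f(z)^{-1/2}\,\bigl[R_{11}(z)N_{11}(z) + R_{12}(z)N_{21}(z)\bigr].
$$
Using $R_{11} = 1 + \mathcal O(1/n)$, $R_{12} = \mathcal O(1/n)$, the uniform boundedness of $\widetilde N_{21}$ on compacta of $\C\setminus\Gamma$, and the boundedness of $F(\infty)$ and $F^{-1}$ granted by Lemma~\ref{lemma:Lambda}, the second summand contributes an additive $\mathcal O(1/n)$ term inside the brackets, and after pulling out the scalar factor $F(\infty)/F(z)$ the first formula of the theorem emerges. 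The identical calculation on $Y_{12}$ yields the second formula, with the sign reflecting the piecewise definition of $\widetilde N_{12}$ across $D_\pm$ in \eqref{defNtilde}.

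The main technical difficulty, already addressed in the construction, is the matching condition (RH-P2) on the circle $B_\delta$: one must verify that the prefactor $\bm E$ in \eqref{defMatrixE} extends holomorphically across $\Gamma_1 \cap D_\delta$ despite the jump of $\bm N$ there, and that after substituting the known large-$\zeta$ asymptotics of the Airy parametrix $\bm \Psi$ one obtains exact cancellation of the transition factors $\bm M_2$, $b^{\sigma_3}\bm B$, and $\Phi^{-n\sigma_3}$, leaving precisely the desired $\bm I + \mathcal O(1/n)$. Once this algebraic verification is in place, the small-norm estimate for $\bm R$ and the bookkeeping needed to identify the matrix entries are routine.
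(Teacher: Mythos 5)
Your proposal is correct and follows essentially the same route as the paper: normalize $\bm Y$ to $\bm T$ via \eqref{def:matrixT}, deform along $\Gamma^\perp$, compare with the global parametrix $\bm N$ and the local parametrices at $v$ and the $a_j$, obtain $\bm R=\bm I+\mathcal O(1/n)$ by a small-norm argument, and read off $Q_n$ and $R_n$ from the $(1,1)$ and $(1,2)$ entries using $\bm N=F(\infty)^{\sigma_3}\widetilde{\bm N}F^{-\sigma_3}$ and \eqref{def:v}--\eqref{defNtilde}. The only point worth flagging is that the boundedness of $N_{21}$ near $z_n$ requires the cancellation of the pole of $q^{(1)}$ against the zero of $u_1$ (Lemma~\ref{lemma:bdryvaluesV}), but this is built into the construction.
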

The sign and the paths of integration are selected in accordance with the definition of $\widetilde{\bm N}_{1j}$ in \eqref{defNtilde}.

We see in particular, that the spurious zero of $Q_n$ is asymptotically close to the unique zero of $\bm N_{11}$, which appears only when $\bm z_n$ is on the first sheet. Otherwise, it gives us an extra interpolation condition (zero of $\bm Y_{12}$, close to the zero of $\bm N_{12}$).

The Riemann-Hilbert analysis yields  asymptotic formulas not only away from $\Gamma$ but in the rest of the regions. For instance, close to $\Gamma$ but still away from the branch points $a_j$ and the Chebotarev center $v$ the asymptotic expression for $Q_n$ is a combination of two competing terms, which gives rise to zeros of $Q_n$. For instance, by \eqref{def:matrixT} and \eqref{AsympR}, for $z $ in \ding{172} of the domain $D$, 
$$
\textbf Y(z)=c^{-n\sigma_3} \left( \bm I + \mathcal O\left( \frac{1}{n}\right) \right) \bm N(z) \begin{pmatrix}
1 & 0 \\
\frac{\tau_1}{t_1 \Phi^{2n}(z)} & 1
\end{pmatrix} f(z)^{-\sigma_3/2}\Phi^{n\sigma_3}(z).
$$
In particular,
\begin{align*}
c^n \,Q_n(z) f(z)^{1/2}= &     \left( \bm N_{11}(z) \Phi^n(z) + \bm N_{12}(z) \frac{\tau_1}{t_1}\Phi^{-n}(z) \right)\,  \left( 1 + \mathcal O\left( \frac{1}{n}\right) \right)\\
& + \left( \bm N_{21}(z) \Phi^n(z) + \bm N_{22}(z) \frac{\tau_1}{t_1}\Phi^{-n}(z) \right)\,    \mathcal O\left( \frac{1}{n}\right).  
\end{align*}
For $z\in \Gamma_1$ we can rewrite it as 
\begin{align*}
 Q_n(z) = & \left( \frac{\Phi _+(z)}{c}\right)^n  \left( \frac{\bm N_{11}(z)_+ }{f(z)_+^{1/2}} + \frac{\bm N_{11}(z)_-}{f(z)^{1/2}_-}    \right)\,  \left( 1 + \mathcal O\left( \frac{1}{n}\right) \right)  .
\end{align*}


Finally, in order to analyze the behavior at the Chebotarev center $v$, for instance, when $z\in D_\delta \cap \text{\ding{172}}$ (see Figure~\ref{fig:Local}), it is sufficient to obtain the expression for $\bm Y$ from
$$
\textbf Y(z)= c^{-n\sigma_3} \left( \bm I + \mathcal O\left( \frac{1}{n}\right) \right) \bm P(z) 
\begin{pmatrix}
1 & 0 \\
\frac{\tau_1}{t_1 \Phi^{2n}(z)} & 1
\end{pmatrix}
f(z)^{-\sigma_3/2}\Phi^{n\sigma_3}(z),
$$
where all the ingredients in the right hand side were given above. Since the formulas for $Q_n$ and $R_n$ obtained this way are not easily simplified, we omit their explicit calculation here for the sake of brevity.

\section{Wrapping up, or matching the asymptotic formulas and the Nutall's conjecture} \label{sec:comparison}

In \cite{MR769985} Nuttall  conjectured the form of the leading term of asymptotics for $Q_n$ and $R_n$ away from $\Gamma$ in terms of a solution of a  \emph{scalar} boundary value problem. We show next that our results match the Nuttall's conjecture.


Let us denote
\begin{equation}
\label{def:chi}
 {\chi(z)}:=\left( \frac{\Phi(z)}{c} \right)^n \frac{\bm N_{11}(z)}{f(z)^{1/2}}, \quad  {\mathfrak R}(z):=  \left( c \Phi(z) \right)^{-n}  \bm N_{12}(z)  f(z)^{1/2},
\end{equation}
so that $\chi$ has a pole of order $n$ at infinity, and ${\mathfrak R}$ has there a zero of order $n+1$ (unless the pathological situation of $z_n=\infty$ occurs). By \eqref{asymptoticsOutside},
$$
Q_n(z)= \chi(z)\left( 1 + \mathcal O\left( \frac{1}{n}\right) \right), \quad z\in \C\setminus \Gamma.
$$
But for $z\in \Gamma_j^\circ$,
$$
\left(f(z)\chi(z) \right)_\pm=f(z)^{1/2}_\pm \left( \frac{\Phi(z)_\pm}{c} \right)^n \bm N_{11\pm}(z);
$$
using that $\rho(z)=t_j f_+(z)/\tau_j=t_j \tau_j f_-(z)$ on $\Gamma_j^\circ$,  \eqref{boundaryforN} and Lemma \ref{lemma1}, we get
\begin{align*}
\sigma(z)  \chi_+ (z)=& -w_+(z)\frac{f(z)^{1/2}_+}{\tau_j} \left( c \,\Phi(z)_- \right)^{-n} \bm N_{12-}(z)=(w  \mathfrak R)_-(z),\\
\sigma(z)    \chi _- (z)= & w_+(z) \tau_j  f(z)^{1/2}_-  \left( c\, \Phi(z)_+  \right)^{-n}   \bm N_{12+}(z)= (w \mathfrak R)_+(z),
\end{align*}
where $\sigma(z):=\rho(z) w_+(z)$ on $\Gamma^\circ$.
These two equations match  the boundary value conditions in \cite[formula (5.6)]{MR891770} (after replacing $\chi_2=\chi$ and $H=w \mathfrak R$) that define uniquely the leading asymptotic terms for $Q_n$ and $R_n$, according to the conjecture of Nuttall.
 
Let us finally compare  the asymptotic formulas obtained in Sections \ref{sec:wkb} and \ref{sec:RH}, and given by Theorems \ref{thm:wkb} and \ref{thm:exteriorAsympRH}. We introduce here the notation
$$
u(z)=\int_{a_1}^z \frac{ dt}{w(t)} = \int_{a_1}^z d\nu_0.
$$
On one hand, observe that in the case $p=3$, with the function $\theta(\bm z,\bm \zeta)$ defined in \eqref{functionV} we have
$$
 \theta(\bm z_n,z)=\frac{w_n}{V(z_n)} u(z) + w_n \int_{a_1}^z \frac{dt}{w(t)(t-z_n)} ,
$$
so that 
\begin{align*}
N \mathcal H_n(z) = & N G(z,\infty)  +   d_{n}\, u(z)+\frac{1}{2}\,   \theta( \bm z_{n},z) \\
= & n \log \Phi(z) +\frac{w_n}{2} \int_{a_1}^z \frac{dt}{w(t)(t-z_n)} + \frac{1}{2}\int_{a_1}^z \frac{t\, dt}{w(t)}   +\delta_2  u(z) +\mathcal O\left( \frac{1}{n}\right),
\end{align*}
where
$$
\delta_2=d_n -\frac{v}{2}+\frac{w_n}{2 V(z_n)}.
$$
On the other hand, up to a multiplicative constant,
$$
F(z)^{-1}\exp\left(\int ^{\bm z^{(1)}} d\eta_{\bm z_n}\right) = \exp\left( \int ^{\bm z^{(1)}} d( \eta_{\bm z_n} -\Xi\, \nu_0) \right).
$$
Recalling the expression for $\eta_{\bm z_n} $ in \eqref{mainDiff2}, we get 
\begin{align*}
F(z)^{-1} & \exp\left(\int ^{\bm z^{(1)}} d\eta_{\bm z_n}\right)   = \\
& = \frac{(z-z_n)^{1/2}}{(AV)^{1/4}(z)} \exp\left( \int ^{\bm z^{(1)}} \left( \frac{w_n}{2} \int_{a_1}^z \frac{dt}{w(t)(t-z_n)} + \frac{1}{2}\int_{a_1}^z \frac{t\, dt}{w(t)}    \right) dt + \delta_3 u(z) \right),
\end{align*}
with an appropriate selection of the constant $\delta_3$. Comparing thus expressions \eqref{Hcal}--\eqref{WKBforQn}, obtained by the WKB analysis, with \eqref{asymptoticsOutside} we see that they coincide, up to the right determination of the constants $\delta_2$ and $\delta_3$ above. But these constants are uniquely determined by the condition that the right hand side in \eqref{WKBforQn} and in \eqref{asymptoticsOutside} must be single-valued in $\C\setminus \Gamma$.




\begin{thebibliography}{10}

\bibitem{Aptekarev:2011fk}
A.~I. Aptekarev and M.~L. Yattselev, \emph{{P}ad\'e approximants for functions
  with branch points -- strong asymptotics of {N}uttall-{S}tahl polynomials},
  arXiv:1109.0332, 2011.

\bibitem{Baratchart:2011fk}
L.~Baratchart and M.~L. Yattselev, \emph{Asymptotics of {P}ad\'e approximants
  to a certain class of elliptic-type functions}, Arxiv:1103.3556, 2011.

\bibitem{Bertola2007}
M.~Bertola, \emph{{B}outroux curves with external field: equilibrium measures
  without a minimization problem}, Preprint arXiv:0705.3062.

\bibitem{MR1882132}
V.~I. Buslaev, \emph{Simple counterexample to the {B}aker-{G}ammel-{W}ills
  conjecture}, East J. Approx. \textbf{7} (2001), no.~4, 515--517. \MR{1882132
  (2003e:41020)}

\bibitem{MR1957951}
\bysame, \emph{The {B}aker-{G}ammel-{W}ills conjecture in the theory of
  {P}ad{\'e} approximants}, Mat. Sb. \textbf{193} (2002), no.~6, 25--38.
  \MR{1957951 (2004c:41025)}

\bibitem{MR99g:34038}
P.~Deift, T.~Kriecherbauer, K.~T-R McLaughlin, S.~Venakides, and X.~Zhou,
  \emph{Asymptotics for polynomials orthogonal with respect to varying
  exponential weights}, Internat. Math. Res. Notices (1997), no.~16, 759--782.
  \MR{99g:34038}

\bibitem{MR2001f:42037}
\bysame, \emph{Strong asymptotics of orthogonal polynomials with respect to
  exponential weights}, Comm. Pure Appl. Math. \textbf{52} (1999), no.~12,
  1491--1552. \MR{2001f:42037}

\bibitem{MR2001g:42050}
P.~Deift, T.~Kriecherbauer, K.~T.-R. McLaughlin, S.~Venakides, and X.~Zhou,
  \emph{Uniform asymptotics for polynomials orthogonal with respect to varying
  exponential weights and applications to universality questions in random
  matrix theory}, Comm. Pure Appl. Math. \textbf{52} (1999), no.~11,
  1335--1425. \MR{2001g:42050}

\bibitem{MR98b:35155}
P.~Deift, S.~Venakides, and X.~Zhou, \emph{New results in small dispersion
  {K}d{V} by an extension of the steepest descent method for
  {R}iemann-{H}ilbert problems}, Internat. Math. Res. Notices (1997), no.~6,
  286--299. \MR{98b:35155}

\bibitem{MR94d:35143}
P.~Deift and X.~Zhou, \emph{A steepest descent method for oscillatory
  {R}iemann-{H}ilbert problems. {A}symptotics for the {M}{K}d{V} equation},
  Ann. of Math. (2) \textbf{137} (1993), no.~2, 295--368. \MR{94d:35143}

\bibitem{MR2000g:47048}
P.~A. Deift, \emph{Orthogonal polynomials and random matrices: a
  {R}iemann-{H}ilbert approach}, New York University Courant Institute of
  Mathematical Sciences, New York, 1999. \MR{2000g:47048}

\bibitem{MR98k:47097}
P.~A. Deift, A.~R. Its, and X. Zhou, \emph{A {R}iemann-{H}ilbert
  approach to asymptotic problems arising in the theory of random matrix
  models, and also in the theory of integrable statistical mechanics}, Ann. of
  Math. (2) \textbf{146} (1997), no.~1, 149--235. \MR{98k:47097}

\bibitem{Dumas:1908fk}
S.~Dumas, \emph{Sur le d\'eveloppement des fonctions elliptiques en fractions
  continues}, Ph.D. thesis, Z\"urich, 1908.

\bibitem{Fokas92}
A.S. Fokas, A.R. Its, and A.V. Kitaev, \emph{The isomonodromy approach to
  matrix models in {2D} quantum gravity}, Comm. Math. Phys. \textbf{147}
  (1992), 395--430.

\bibitem{Gonchar:84}
A.~A. Gonchar and E.~A. Rakhmanov, \emph{Equilibrium measure and the
  distribution of zeros of extremal polynomials}, Mat.\ Sbornik \textbf{125}
  (1984), no.~2, 117--127, translation from Mat. Sb., Nov. Ser. 134(176),
  No.3(11), 306-352 (1987).

\bibitem{Gonchar:87}
\bysame, \emph{Equilibrium distributions and degree of rational approximation
  of analytic functions}, Math.\ USSR Sbornik \textbf{62} (1987), no.~2,
  305--348, translation from Mat. Sb., Nov. Ser. 134(176), No.3(11), 306-352
  (1987).

\bibitem{MR2553077}
D.~V. Khristoforov, \emph{On the uniform approximation of elliptic functions by
  {P}ad{\'e} approximants}, Mat. Sb. \textbf{200} (2009), no.~6, 143--160.
  \MR{2553077 (2010h:41017)}

\bibitem{MR2087231}
A.~B.~J. Kuijlaars, K.~T.-R. McLaughlin, W.~Van~Assche, and M.~Vanlessen,
  \emph{The {R}iemann-{H}ilbert approach to strong asymptotics for orthogonal
  polynomials on {$[-1,1]$}}, Adv. Math. \textbf{188} (2004), no.~2, 337--398.
  \MR{MR2087231 (2005i:33011)}

\bibitem{Kuijlaars:2011fk}
A. B.~J. Kuijlaars and M.~Y. Mo, \emph{The global parametrix in the
  {R}iemann-{H}ilbert steepest descent analysis for orthogonal polynomials},
  Comput. Methods Funct. Theory \textbf{11} (2011), no.~1, 161--178.

\bibitem{MR1983783}
D.~S. Lubinsky, \emph{Rogers-{R}amanujan and the {B}aker-{G}ammel-{W}ills
  ({P}ad{\'e}) conjecture}, Ann. of Math. (2) \textbf{157} (2003), no.~3,
  847--889. \MR{1983783 (2004b:30071)}

\bibitem{MR1554864}
A. Markoff, \emph{Deux d{\'e}monstrations de la convergence de
  certaines fractions continues}, Acta Math. \textbf{19} (1895), no.~1,
  93--104. \MR{1554864}

\bibitem{MR2770010}
A.~Mart{\'{\i}}nez-Finkelshtein and E.~A. Rakhmanov, \emph{Critical measures,
  quadratic differentials, and weak limits of zeros of {S}tieltjes
  polynomials}, Comm. Math. Phys. \textbf{302} (2011), no.~1, 53--111.
  \MR{2770010}

\bibitem{MR2647571}
A. Mart{\'{\i}}nez-Finkelshtein and E.~A. Rakhmanov, \emph{On
  asymptotic behavior of {H}eine-{S}tieltjes and {V}an {V}leck polynomials},
  Recent trends in orthogonal polynomials and approximation theory, Contemp.
  Math., vol. 507, Amer. Math. Soc., Providence, RI, 2010, pp.~209--232.
  \MR{2647571 (2011g:30083)}
  
\bibitem{MFRS}
A. Mart{\'{\i}}nez-Finkelshtein, E.~A. Rakhmanov, and S.~P. Suetin, \emph{Variation of the equilibrium 
energy and the $S$-property of a stationary compact set},
Uspekhi Mat. Nauk \textbf{66} (2011), no.~1, 183--184.

\bibitem{MR1130396}
E.~M. Nikishin and V.~N. Sorokin, \emph{Rational approximations and
  orthogonality}, Translations of Mathematical Monographs, vol.~92, American
  Mathematical Society, Providence, RI, 1991, Translated from the Russian by
  Ralph P. Boas. \MR{1130396 (92i:30037)}

\bibitem{MR0613842}
J.~Nuttall, \emph{The convergence of {P}ad{\'e} approximants to functions with
  branch points}, Pad{\'e} and rational approximation ({P}roc. {I}nternat.
  {S}ympos., {U}niv. {S}outh {F}lorida, {T}ampa, {F}la., 1976), Academic Press,
  New York, 1977, pp.~101--109. \MR{0613842 (58 \#29631)}

\bibitem{MR769985}
\bysame, \emph{Asymptotics of diagonal {H}ermite-{P}ad\'e polynomials}, J.
  Approx. Theory \textbf{42} (1984), no.~4, 299--386. \MR{MR769985 (86j:41017)}

\bibitem{MR891770}
\bysame, \emph{Asymptotics of generalized {J}acobi polynomials}, Constr.
  Approx. \textbf{2} (1986), no.~1, 59--77. \MR{MR891770 (88h:41029)}

\bibitem{MR0487173}
J.~Nuttall and S.~R. Singh, \emph{Orthogonal polynomials and {P}ad{\'e}
  approximants associated with a system of arcs}, J. Approximation Theory
  \textbf{21} (1977), no.~1, 1--42. \MR{0487173 (58 \#6833)}

\bibitem{Ransford95}
Th. Ransford, \emph{Potential theory in the complex plane}, Mathematical
  Society Student Texts, vol.~28, Cambridge University Press, London, 1995.

\bibitem{Saff:97}
E.~B. Saff and V.~Totik, \emph{Logarithmic potentials with external fields},
  Grundlehren der Mathematischen Wissenschaften, vol. 316, Springer-Verlag,
  Berlin, 1997.

\bibitem{MR88d:30004a}
H. Stahl, \emph{Extremal domains associated with an analytic function.
  {I}, {II}}, Complex Variables Theory Appl. \textbf{4} (1985), no.~4,
  311--324, 325--338. \MR{88d:30004a}

\bibitem{MR88d:30004b}
\bysame, \emph{The structure of extremal domains associated with an analytic
  function}, Complex Variables Theory Appl. \textbf{4} (1985), no.~4, 339--354.
  \MR{88d:30004b}

\bibitem{Stahl:86}
\bysame, \emph{Orthogonal polynomials with complex-valued weight function. {I},
  {II}}, Constr. Approx. \textbf{2} (1986), no.~3, 225--240, 241--251.
  \MR{88h:42028}

\bibitem{MR88k:41013}
\bysame, \emph{Existence and uniqueness of rational interpolants with free and
  prescribed poles}, Approximation theory, Tampa (Tampa, Fla., 1985--1986),
  Lecture Notes in Math., vol. 1287, Springer, Berlin, 1987, pp.~180--208.
  \MR{88k:41013}

\bibitem{MR90f:40003}
\bysame, \emph{{$n$}th-root asymptotics of orthonormal polynomials and
  nondiagonal {P}ad\'e approximants}, Orthogonal polynomials and their
  applications (Segovia, 1986), Lecture Notes in Math., vol. 1329, Springer,
  Berlin, 1988, pp.~308--319. \MR{90f:40003}

\bibitem{MR90i:30063}
\bysame, \emph{On the convergence of generalized {P}ad\'e approximants},
  Constr. Approx. \textbf{5} (1989), no.~2, 221--240. \MR{90i:30063}

\bibitem{MR95a:42034}
\bysame, \emph{Orthogonal polynomials with respect to complex-valued measures},
  Orthogonal polynomials and their applications (Erice, 1990), IMACS Ann.
  Comput. Appl. Math., vol.~9, Baltzer, Basel, 1991, pp.~139--154.
  \MR{95a:42034}

\bibitem{MR98g:41018}
\bysame, \emph{Diagonal {P}ad\'e approximants to hyperelliptic functions}, Ann.
  Fac. Sci. Toulouse Math. (6) (1996), no.~Special issue, 121--193, 100 ans
  apr\`es Th.-J.\ Stieltjes. \MR{98g:41018}

\bibitem{MR99a:41017}
\bysame, \emph{The convergence of {P}ad\'e approximants to functions with
  branch points}, J. Approx. Theory \textbf{91} (1997), no.~2, 139--204.
  \MR{99a:41017}

\bibitem{MR99k:41019}
\bysame, \emph{Spurious poles in {P}ad\'e approximation}, Proceedings of the
  VIIIth Symposium on Orthogonal Polynomials and Their Applications (Seville,
  1997), vol.~99, 1998, pp.~511--527. \MR{99k:41019}

\bibitem{MR93d:42029}
H. Stahl and V. Totik, \emph{General orthogonal polynomials},
  Encyclopedia of Mathematics and its Applications, vol.~43, Cambridge
  University Press, Cambridge, 1992. \MR{93d:42029}

\bibitem{Strebel:84}
K. Strebel, \emph{Quadratic differentials}, Ergebnisse der Mathematik und
  ihrer Grenzgebiete (3) [Results in Mathematics and Related Areas (3)],
  vol.~5, Springer-Verlag, Berlin, 1984. \MR{86a:30072}

\bibitem{MR1805599}
S.~P. Suetin, \emph{On the uniform convergence of diagonal {P}ad{\'e}
  approximants for hyperelliptic functions}, Mat. Sb. \textbf{191} (2000),
  no.~9, 81--114. \MR{1805599 (2002e:30034)}

\bibitem{MR2042915}
\bysame, \emph{On the asymptotics of diagonal {P}ad{\'e} approximants for
  hyperelliptic functions of genus {$g=2$}}, Uspekhi Mat. Nauk \textbf{58}
  (2003), no.~4(352), 159--160. \MR{2042915 (2005c:41025)}

\end{thebibliography}

\def\cprime{$'$}
\providecommand{\bysame}{\leavevmode\hbox to3em{\hrulefill}\thinspace}
\providecommand{\MR}{\relax\ifhmode\unskip\space\fi MR }
\providecommand{\MRhref}[2]{%
  \href{http://www.ams.org/mathscinet-getitem?mr=#1}{#2}
}
\providecommand{\href}[2]{#2}

\end{document}